\renewcommand\appendix{\setcounter{secnumdepth}{-2}}
\newtheorem{thm}{Theorem}[section]
\newtheorem{lem}[thm]{Lemma}
\newtheorem{prop}[thm]{Proposition}
\newtheorem{cor}[thm]{Corollary}
\theoremstyle{remark}
\newtheorem{re}[thm]{Remark}
\newtheorem{ex}[thm]{Example}
\newtheorem{defn}[thm]{Definition}
\newcommand{\ord}{\mathrm{ord}}
\newcommand{\R}{{\mathbb R}}
\def\rep{{\to\!\!\! -}}
\def\nrep{{\to\!\!\not\! -}}
\numberwithin{equation}{section}
\begin{document}
\author{Zilong He}
\address{Department of Mathematics, Dongguan University of Technology, Dongguan 523808, China}
\email{zilonghe@connect.hku.hk}
\title[ ]{On classic n-universal quadratic forms over dyadic local fields}
\thanks{ }
\subjclass[2010]{11E08, 11E20, 11E95}
\date{\today}
\keywords{$n$-universal quadratic forms, dyadic fields, 15-theorem}
\begin{abstract}
  Let $ n $ be an integer and $ n\ge 2 $. A classic integral quadratic form over local fields is called classic $ n $-universal if it represents all $n$-ary classic integral quadratic forms. We determine the equivalent conditions and minimal testing sets for classic $ n $-universal quadratic forms over dyadic local fields.
\end{abstract}
\maketitle

\section{Introduction} 
The determination problem of classic $n$-universal quadratic forms over $\mathbb{Z}$ has been widely studied (cf. \cite{conway_15-theorem-2000,kim_finiteness_2005,kim_2universal_1999,kim_2universal_1997,ko_on_1937}), since Mordell \cite{mordell_waring_1930} introduced new Waring's problems. 
Classifying $ n $-universal lattices in local fields plays an important role in the global field situation. The characterization for classic $ n $-universal lattices over non-dyadic local fields has been completed in \cite{hhx_indefinite_2021,xu_indefinite_2020} in terms of Jordan splittings. Over general dyadic fields, Beli \cite{beli_universal_2020} completely classified $ 1 $-universal integral lattices by his theory of BONGs (Bases Of Norm Generators) developed in \cite{beli_integral_2003,beli_representations_2006,beli_Anew_2010}, which was recently extended to $ n\ge 2$ by Hu and the author \cite{HeHu2} (also see  \cite{earnes_local_2021,HeHu1,hhx_indefinite_2021,xu_indefinite_2020} for partial results obtained by classical theory). One may be only interested in representations of classic integral lattices. Thus it is natural to ask the necessary and sufficient conditions for classic $ n $-universal lattices in local fields, especially in dyadic fields, because any integral lattice in a non-dyadic local field must be classic integral. The case $ n=1 $ can be solved by Beli's results in \cite{beli_universal_2020} without any barrier, but it is hard to derive general cases from \cite{HeHu2} since the notion of $ n $-universality does not coincide for integral lattices and classic integral lattices when $ n\ge 2$.

In the paper, we give a criterion (Theorem \ref{thm:classicalnuniversaldyadic}) and determine a minimal testing set (Theorem \ref{thm:classicalnuniversaldyadic15theorem}) for classic $ n $-universal lattices for arbitrary $n$ and arbitrary dyadic local fields. As seen in \cite{HeHu2}, the use of BONGs enables us to recognize the patterns from the cases of lower rank and predict the lattice sets for testing $ n $-universality. In view of that, our main results will be shown by the techniques used there and be formulated in the language of BONGs for compactness. Although the treatment here is similar to \cite{HeHu2}\footnote{As pointed out in  \cite[Section 4]{beli_universal_2020}, one may simplify the $ n $-universal problem to some extent by reducing to the cases $ n=1,2,3,4 $.}, it will be seen that the classic integral case is more complicated, for example, finding maximal lattices is not sufficient for determining minimal testing sets.

All quadratic spaces and lattices will be assumed to be non-degenerate. Let $F$ be an algebraic number field or a local field, and $ \mathcal{O}_{F} $ the ring of integers of $ F $. Let $ V $ be a quadratic space over $ F $ associated with the symmetric bilinear form $ B:V\times V\to F $ and put $ Q(x):=B(x,x) $ for any $ x\in V $.  For a subset $ L$ of $ V $, we say that $ L $ is an \textit{$\mathcal{O}_{F} $-lattice} in $ V $ if it is a finitely generated $ \mathcal{O}_{F} $-module and say that it is on $ V $ if $ V=FL $, i.e. $ V $ is the space spanned by $ L $ over $ F $. We call an $ \mathcal{O}_{F} $-lattice $ L $ \textit{integral} if $ \mathfrak{n}L\subseteq \mathcal{O}_{F} $ and \textit{classic integral} if $  \mathfrak{s}L\subseteq \mathcal{O}_{F}$, respectively, where $ \mathfrak{n}L $ and $ \mathfrak{s}L $ denote the norm and the scale of $ L $, respectively (cf. \cite[\S 82E, p.\hskip 0.1cm 227]{omeara_quadratic_1963}). Following \cite[\S 82A, p.\hskip 0.1cm 220]{omeara_quadratic_1963}, we write $ N\rep L $ when a lattice $ N $ is represented by another lattice $ L $. Similarly for quadratic spaces. 

Let $ x_{1},\ldots, x_{m} $ be pairwise orthogonal vectors of $ V  $ with $ Q(x_{i})=a_{i} $. Then we write $ V\cong [a_{1},\ldots,a_{m}] $ if $ V=Fx_{1}\perp \ldots \perp Fx_{m} $ and $ L\cong \langle a_{1},\ldots,a_{m}\rangle $ if $ L=\mathcal{O}_{F}x_{1}\perp \ldots\perp \mathcal{O}_{F}x_{m} $.  When $F$ is a dyadic local field, we also write $ L\cong \prec a_{1},\ldots, a_{m} \succ $ if $ x_{1},\ldots, x_{m} $ is a BONG for $ L $ (cf. Section \ref{sec:pre}).

Unless otherwise stated, we assume that $ F $ is a dyadic local field. Let $ \mathcal{O}_{F}^{\times}$ be the group of units, $ \mathfrak{p} $ the prime ideal of $ F $ and $ \pi\in \mathfrak{p} $ a fixed prime. For $ c\in F^{\times}:=F\backslash \{0\} $, let $ c=\mu\pi^{k} $, where $ \mu\in \mathcal{O}_{F}^{\times} $ and $ k\in \mathbb{Z} $. We define the \textit{order of $ c $} to be $ \ord (c):=k $ and formally set $ \ord (0)=\infty $, and then put $ e:=\ord (2)$. For a fractional or zero ideal $ \mathfrak{c} $ of $ F $, put $ \ord (\mathfrak{c}):=\min\{\ord (a)\mid a\in \mathfrak{c}\}$. We also define the \textit{quadratic defect of $ c $} by $ \mathfrak{d}(c)=\bigcap_{x\in F}(c-x^{2})\mathcal{O}_{F}$ and the \textit{order of relative quadratic defect} by $ d:F^{\times}/F^{\times 2}\rep \mathbb{N}\cup \{\infty\} $, $ d(c)=\ord (c^{-1}\mathfrak{d}(c) )$. As usual, let $ \Delta:=1-4\rho $ be a fixed unit with $ \mathfrak{d}(\Delta)=4\mathcal{O}_{F} $ and let $N\mathfrak{p}$ be the number of elements in the residue field of $F$. As in \cite{HeHu2}, let $ \mathcal{U} $ be a complete system of representatives of $ \mathcal{O}_{F}^{\times }/\mathcal{O}_{F}^{\times 2} $ such that $ d(\delta)=\ord(\delta-1) $ for all $ \delta\in\mathcal{U} $, and write $\mathcal{U}_{1}=\{\delta\in \mathcal{U}\mid d(\delta)=1\}$.

Let $ \gamma\in F^{\times} $ and $ \xi,\eta\in F $. We write $ \gamma A(\xi,\eta)$ for the binary lattice whose Gram matrix is $ \gamma\begin{pmatrix}
	\xi  &  1  \\
	1  & \eta
\end{pmatrix} $. For $ s,t\in \mathbb{N} $, we denote by 
$ \mathbf{H}_{t} $ the $ \mathcal{O}_{F} $-lattice $ \prec \pi^{t},-\pi^{-t}\succ $ with $ 0\le t\le e $ and let $ \mathbf{H}_{t}^{s}$ stand for the orthogonal sum of $s $ copies of $ \mathbf{H}_{t} $. For $ h,k\in\mathbb{Z} $, we write $ [h,k]^{E} $ (resp. $ [h,k]^{O} $) for the set of all even (resp. odd) integers $ i $ with $ h\le i\le k  $.

 As in \cite{hhx_indefinite_2021}, we call a quadratic space $ V $ over $ F $ \textit{$ n $-universal} if it represents all quadratic spaces of dimension $ n $ over $ F $, and call a classic integral $ \mathcal{O}_{F} $-lattice $  L $ \textit{classic $ n $-universal} if it represents all classic integral $ \mathcal{O}_{F} $-lattices of rank $ n $. 

\medskip

The following theorem characterizes classic $ n $-universal $ \mathcal{O}_{F} $-lattices in terms of good BONGs for $ n\ge 2 $.
\begin{thm}\label{thm:classicalnuniversaldyadic}
	Let $ n $ be an integer and $ n\ge 2 $. Let $M\cong \prec a_{1},\ldots,a_{m}\succ$ be a classic integral $ \mathcal{O}_{F} $-lattice relative to some good BONG and $ R_{i}=\ord\, (a_{i}) $ for $ 1\le i\le m $. Then $ M $ is classic $n$-universal if and only if $ m\ge n+3 $ and the following conditions hold.
    \begin{enumerate}
    	\item[\rm (i)] $ R_{i}=0 $ for $ 1\le i\le n $.
    	
    	\item[\rm (ii)] $ n $ is even, $ R_{n+1}=0 $ and the following conditions hold.  	
    	\begin{enumerate}
    		\item[\rm (1)] $ R_{n+2}\in \{0,1\} $; and if $ R_{n+2}=0 $, then the following conditions hold.  		
    		\begin{enumerate}
    			\item[\rm (a)] $ d((-1)^{\frac{n+2}{2}}a_{1}\cdots a_{n+2})=1 $ or $ R_{n+3}\in \{0,1\} $.
    			
    			\item[\rm (b)] If $ e>1 $, $ R_{n+2}=R_{n+3}=0 $ and $ d((-1)^{\frac{n+2}{2}}a_{1}\cdots a_{n+2})>1 $, then $ d(-a_{j}a_{j+1})=1-R_{j+1} $ for some $ 1\le j\le m-1 $.
    		\end{enumerate}
  		
    		\item[\rm (2)] $ R_{n+3}-R_{n+2}\le 2e $.
    	\end{enumerate}
    
    \item[\rm (iii)] $ n $ is odd and the following conditions hold.    
    \begin{enumerate}
    	\item[\rm (1)] $ R_{n+1}\in \{0,1\} $; and if $ R_{n+1}=0$, then the following conditions hold.
    	
    	\begin{enumerate}
    		\item[\rm (a)]  $ d((-1)^{\frac{n+1}{2}}a_{1}\cdots a_{n+1})=1 $ or $ R_{n+2}\in \{0,1\} $.
    		
    		\item[\rm (b)] If $ e>1 $, $ R_{n+1}=R_{n+2}=0 $ and $ d((-1)^{\frac{n+1}{2}}a_{1}\cdots a_{n+1})>1 $, then
    		
    		 $ d(-a_{j}a_{j+1})=1-R_{j+1} $ for some $ 1\le j\le  m-1  $.
    	\end{enumerate}
    	
    	\item[\rm (2)] Suppose either $ R_{n+1}=1 $ or $ R_{n+2}>1 $.     	
    	\begin{enumerate}
    		\item[\rm (a)] If $ R_{n+2}-R_{n+1} $ is even, then $ R_{n+3}+R_{n+2}-2R_{n+1}\le 2e-2 $ or
    		
    			$ d(-a_{j}a_{j+1})\le 2e+R_{n+1}-R_{j+1}-1 $ for some $ n+2\le j\le m-1$.
    		
    		\item[\rm (b)] If $ R_{n+2}-R_{n+1} $ is odd, then $ R_{n+3}+R_{n+2}-2R_{n+1}\le 2e $ or 
    		
    		$ d(-a_{j}a_{j+1})\le 2e+R_{n+1}-R_{j+1} $ for some $ n+2\le j\le m-1$.
    	\end{enumerate}
    	
    	\item[\rm (3)] $ R_{n+3}-R_{n+2}\le 2e $.   
    \end{enumerate} 
    \end{enumerate}
\end{thm}	
\begin{re}
	\begin{enumerate}
		\item[\rm (i)]  Based on the terminology introduced by Beli, Theorem \ref{thm:classicalnuniversaldyadic} can be stated more compactly (see Theorems \ref{thm:classicaleven-nuniversaldyadic} and \ref{thm:classicalodd-nuniversaldyadic}).
		
		\item[\rm (ii)] From the theorem we immediately see that there is no quaternary classic $ 2 $-universal lattice over any dyadic local field, which recovers \cite[Proposition 4.6]{hhx_indefinite_2021}.
	\end{enumerate} 
\end{re}
For $n\ge 2$, we also determine a minimal testing set for classic $ n $-universality.
\begin{thm}\label{thm:classicalnuniversaldyadic15theorem}
	Let $ n$ be an integer and $ n\ge 2 $. Let $ M $ be a classic integral $ \mathcal{O}_{F} $-lattice. 
	\begin{enumerate}
		\item[\rm(i)] When $ n $ is even, $ M $ is classic $ n $-universal if and only if it represents the following \\$ 8(N\mathfrak{p})^{e}-4 (N\mathfrak{p})^{e-1}+1+u_{e} $ classic integral $ \mathcal{O}_{F} $-lattices
		\begin{align*}
			&\mathbf{H}_{e}^{\frac{n}{2}}, \quad\mathbf{H}_{1}^{\frac{n-2}{2}}\perp A(2,2\rho)\quad\text{(if $ e=1 $)},\\ 
			&\mathbf{H}_{0}^{\frac{n-2}{2}}\perp \langle 1,-\varepsilon\pi \rangle, \quad  \mathbf{H}_{0}^{\frac{n-2}{2}}\perp  \langle \Delta,-\Delta\varepsilon\pi \rangle,  \\
			&\mathbf{H}_{0}^{\frac{n-2}{2}}\perp  A(1,-(\delta-1)),  \quad \mathbf{H}_{0}^{\frac{n-2}{2}}\perp  (1+4\rho (\delta-1)^{-1})A(1,-(\delta-1)) 
		\end{align*}
		for all $ \varepsilon\in \mathcal{U} $ and for all $ \delta\in \mathcal{U}_{1} $, where $ u_{e}=1 $ if $ e=1 $, and $ u_{e}=0 $, otherwise.
		
		\item[\rm(ii)] When $ n $ is odd, $ M $ is classic $n$-universal if and only if  it represents the following $ 8(N\mathfrak{p})^{e} $ classic integral $ \mathcal{O}_{F} $-lattices
		\begin{align*}
			&\mathbf{H}_{0}^{\frac{n-1}{2}}\perp  \langle \varepsilon \rangle, \quad \mathbf{H}_{0}^{\frac{n-3}{2}}\perp \varepsilon(1+4\rho\pi^{-1}) A(1,-\pi)\perp \langle \varepsilon(1+\pi) \rangle,\\  
			&\mathbf{H}_{0}^{\frac{n-1}{2}}\perp    \langle \varepsilon\pi \rangle, \quad\mathbf{H}_{0}^{\frac{n-3}{2}}\perp  A(1,4\rho)\perp \langle \Delta\varepsilon\pi \rangle
		\end{align*}
		for all $  \varepsilon\in \mathcal{U} $.
		
		\item[\rm(iii)] The set of lattices listed in (i) (resp. (ii)) is minimal (in the sense of \cite[p.\hskip 0.1cm 4]{HeHu2}). 
	\end{enumerate}
\end{thm}
\begin{re}
	By \cite[Corollary 2.3(ii)]{HeHu2}, we have
	 \begin{align*}
		 	\mathbf{H}_{t}=\prec \pi^{t},-\pi^{-t}\succ\cong
		 	\begin{cases}
			 		A(\pi^{t},0)  &\text{if $ 0\le t<e $}, \\
			 		A(2,0)\cong A(0,0) &\text{if $   t=e $}.
			 	\end{cases}
		 \end{align*}
\end{re}
We give several applications of Theorems \ref{thm:classicalnuniversaldyadic} and \ref{thm:classicalnuniversaldyadic15theorem} in global representations. In the remainder of this section, we let $K$ be an algebraic number field. A classic integral $\mathcal{O}_{K}$-lattice $L$ is called \textit{classic $n$-universal} over $\mathcal{O}_{K}$ (cf. \cite{hhx_indefinite_2021,kim_finiteness_2005}) if one of the following statements holds:
 \begin{enumerate}
 	\item[\rm (i)] $L$ is positive definite and represents all positive definite classic integral $\mathcal{O}_{K}$-lattices of rank $n$; 
 	
 	\item[\rm (ii)]  $L$ is indefinite and represents all classic integral $\mathcal{O}_{K}$-lattices of rank $n$.
 \end{enumerate} 
 
Although the results in \cite{hsia_positive_1978} indicate that universal lattices always exist in a totally real number field, not all number fields admit an $ n $-universal lattice for some specific values of the rank (see  \cite{chan_ternary-1996,kala_2021,sasaki_2universal_2006,sasaki_quaternary_2009} for recent developments). 

The following theorem provides a necessary condition for an algebraic number field which admits some classic $ n $-universal lattice.
\begin{thm}\label{thm:necessarycon}
	 Let $\mathfrak{p}$ be a dyadic prime of $K$. Let $L$ be a classic integral $ \mathcal{O}_{K} $-lattices of rank $ m\ge n+3\ge 4 $. If $ L_{\mathfrak{p}}\cong \prec a_{1}(\mathfrak{p}),\ldots, a_{m}(\mathfrak{p}) \succ  $ is classic $n$-universal and $ d_{\mathfrak{p}}(a_{i}(\mathfrak{p})a_{i+1}(\mathfrak{p}))>1 $ for $1\le i\le m-1$, then $e_{\mathfrak{p}}=1$.
	
	Thus if the above conditions hold for each dyadic prime of $K$, then $ 2 $ is unramified in $ K $, i.e., the discriminant of $ K $ is odd.
\end{thm}
\begin{ex}
	Let $ K_{\ell}=\mathbb{Q}(\zeta_{\ell}+\zeta_{\ell}^{-1}) $ be the maximal real subfield of the $ \ell^{th}$ cyclotomic field, where $ \zeta_{\ell} $ is the primitive $ \ell^{th} $ root of unity.
	Suppose $ \ell=2^{h}\ge 8 $. Then the discriminant of $ K $ is $  2^{(h-1)2^{h-2}-1} $ by \cite[Theorem 1]{liang_on_1976}. Hence $ 2 $ totally ramifies in $ K_{\ell} $. Also, $ 2\mathcal{O}_{K_{\ell}}=\mathfrak{p}^{2^{h-2}} $ for some prime ideal $ \mathfrak{p} $ and $e_{\mathfrak{p}}=2^{h-2}>1$.
	
	Consider the rank $ m $ classic integral $ \mathcal{O}_{K_{\ell}} $-lattice $ L\cong \langle 1,\ldots, 1,3\rangle $ in the orthogonal base $ \{x_{1},\ldots,x_{m}\} $ of $K_{\ell}L$. Then $ L_{\mathfrak{p}}\cong \prec 1,\cdots, 1,3\succ $ relative to the good BONG $\{(x_{1})_{\mathfrak{p}},\ldots,(x_{m})_{\mathfrak{p}} \} $ by Lemma \ref{lem:goodBONGequivcon}. Since $ 3=1+2 $, we have $  d_{\mathfrak{p}}(3)\ge e_{\mathfrak{p}}>1 $. Hence for $ m\ge n+3 $, $ L_{\mathfrak{p}} $ is not classic $ n $-universal by Theorem \ref{thm:necessarycon}, so $L$ is not classic $n$-universal over $ \mathcal{O}_{K_{\ell}} $ by Proposition \ref{prop:Lp-nuniversal}. 
\end{ex}
Recall from \cite{kim_2universal_1997} that there are exactly five positive definite quinary diagonal $\mathbb{Z}$-lattices that are classic $2$-universal over $\mathbb{Z}$. However, we have the following result for general $\mathcal{O}_{K}$.
\begin{thm}\label{thm:classic-n-universal-rankn+3}
	If the discriminant of $K$ is even, then there is no diagonal classic integral $\mathbb{Z}$-lattice of rank $n+3$ that is classic $n$-universal over $\mathcal{O}_{K}$.
\end{thm}
All positive definite quinary non-diagonal classic $2$-universal $\mathbb{Z}$-lattices were further determined in \cite{kim_2universal_1999}, but they are also not classic $2$-universal over $\mathcal{O}_{K}$ when the discriminant of $K$ is even. Indeed, one can take $n=2$, $K=\mathbb{Q}$, and $E$ to be a totally real number field with even discriminant in the following theorem.
\begin{thm}\label{thm:classic-n-universal-rankn+3-ramified}
 Let $E/K$ be a finite extension. Suppose that there exist dyadic primes $\mathfrak{P}|\mathfrak{p}$ such that $E_{\mathfrak{P}}/K_{\mathfrak{p}}$ is ramified. If a classic integral $\mathcal{O}_{K}$-lattice of rank $n+3$ is classic $n$-universal over $\mathcal{O}_{K}$, then it is not classic $n$-universal over  $\mathcal{O}_{E}$.
\end{thm}
Write $ I_{m} $ for the sum of $ m $ squares. We also generalize Siegel's theorem on sums of squares \cite[Theorem II]{siegel_sums_1945}.
\begin{thm}\label{thm:siegelthmII}
Suppose that $K$ is not totally real. If $ m\ge n+3\ge 4 $, then $I_{m}$ represents all positive definite classic integral $\mathcal{O}_{K}$-lattices of rank $n$ if and only if the discriminant of $ K $ is odd.
\end{thm}
The remaining sections are organized as follows. In Section \ref{sec:pre}, we introduce Beli's representation theory and a testing set for classic $n$-universality. In Section \ref{sec:sufficient conditions}, we investigate the conditions (i)-(iv) of Theorem \ref{thm:beligeneral}, which will be used in subsequent sections. In Sections \ref{sec:classicaleven-con} and \ref{sec:classicalodd-con}, we determine the equivalent conditions for classic $ n $-universal lattices for $ n\ge 2 $. Then we prove Theorems \ref{thm:classicalnuniversaldyadic} and \ref{thm:necessarycon} in Section \ref{sec:proof-main}, and Theorem \ref{thm:classicalnuniversaldyadic15theorem} in Section \ref{sec:classicalcstheorem-mini} and Theorems \ref{thm:classic-n-universal-rankn+3}, \ref{thm:classic-n-universal-rankn+3-ramified} and \ref{thm:siegelthmII} in Section \ref{sec:app}. 

\section{Preliminaries }\label{sec:pre}
  Any unexplained notation or definition can be found in \cite{HeHu2} or a series of papers by Beli \cite{beli_thesis_2001,beli_integral_2003,beli_representations_2006,beli_Anew_2010,beli_representations_2019,beli_universal_2020}.
\begin{lem}\label{lem:goodBONGequivcon}\citep[Lemma 2.2]{HeHu2}
	Let $ x_{1},\ldots, x_{m}  $ be pairwise orthogonal vectors in $ V $ with $ Q(x_{i})=a_{i} $ and $ R_{i}=\ord (a_{i}) $.  Then $ x_{1},\ldots,x_{m} $ forms a good BONG for some lattice in $V$ if and only if the following conditions are satisfied:  
	\begin{align}\label{eq:GoodBONGs}
		R_{i}\le R_{i+2} \quad \text{for all $ 1\le i\le m-2$,}
	\end{align}
	and  
	\begin{align}\label{eq:BONGs}
		R_{i+1}-R_{i}+d(-a_{i}a_{i+1})\ge 0 \quad\text{and}\quad  R_{i+1}-R_{i}\ge -2e \quad \text{for all $ 1\le i\le m-1 $}. 
	\end{align}
\end{lem}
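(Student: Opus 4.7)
The plan is to recognize that this lemma is essentially a repackaging of Beli's characterization of BONGs in \cite{beli_integral_2003}, combined with the definition of ``good''. First, I would unfold Definition \ref{defn:BONG}: the statement that the pairwise orthogonal vectors $x_{1},\ldots,x_{n}$ form a good BONG for some lattice is the conjunction of two conditions, namely (i) they form a BONG for some lattice, and (ii) $\ord Q(x_{i})\le \ord Q(x_{i+2})$ for $1\le i\le n-2$. Condition (ii) is literally \eqref{eq:GoodBONGs}, so the lemma reduces to showing that (i) is equivalent to \eqref{eq:BONGs}.

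Next, I would appeal to the pairwise criterion for BONGs due to Beli (a statement of the form of \cite[Corollary 4.4]{beli_integral_2003}), which asserts that a system of pairwise orthogonal vectors $x_{1},\ldots,x_{n}$ with $Q(x_{i})=a_{i}$ is a BONG for some $\mathcal{O}_{F}$-lattice if and only if for every consecutive pair one has the inequalities in \eqref{eq:BONGs}. For the ``only if'' direction, I would argue by induction on $n$: if $x_{1},\ldots,x_{n}$ is a BONG for $L$, then $x_{1},x_{2}$ is a BONG for the binary sublattice obtained by pulling back along the projection $\pr_{(x_{3},\ldots,x_{n})^{\perp}}$, and the inequalities for the pair $(R_{1},R_{2})$ follow from the standard description of binary BONGs (where $R_{2}-R_{1}\ge -2e$ reflects bounded descent of norm generators under projection, and $R_{2}-R_{1}+d(-a_{1}a_{2})\ge 0$ encodes the obstruction to the quadratic defect inherited by the second norm generator). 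Applying the induction hypothesis to the BONG $x_{2},\ldots,x_{n}$ of $\pr_{x_{1}^{\perp}}L$ produces the remaining pairwise inequalities.

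The ``if'' direction is constructive: assuming \eqref{eq:BONGs}, I would build $L$ inductively using Beli's rank-two building blocks. The inequalities at each pair $(a_{i},a_{i+1})$ are precisely those that ensure the existence of a binary $\mathcal{O}_{F}$-lattice with the prescribed norm generators and scale; splicing these together along a common vector yields an $n$-ary lattice $L$ for which $x_{1},\ldots,x_{n}$ is a BONG.

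The main obstacle is simply to match the statement in \cite{beli_integral_2003} to the notational conventions of the present paper: once the translation between Beli's ``BONG'' data and the vectors $x_{1},\ldots,x_{n}$ here is fixed, the lemma is immediate and no further argument beyond the inductive unpacking above is needed. In particular, no new combinatorial or number-theoretic input is required, and the proof is essentially bookkeeping on top of the cited result.
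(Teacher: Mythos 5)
Your proposal is correct and follows essentially the same route as the paper, which simply cites Beli's results (\cite[Lemmas 3.5, 3.6 and 4.3(ii)]{beli_integral_2003}) for the pairwise criterion and observes that ``good'' is exactly condition \eqref{eq:GoodBONGs}. Your additional inductive sketch of why the cited criterion holds is harmless extra detail; the only quibble is that the relevant statements in \cite{beli_integral_2003} are Lemmas 3.5 and 3.6 rather than Corollary 4.4.
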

Let $ M\cong \prec a_{1},\ldots, a_{m} \succ $ be an $ \mathcal{O}_{F} $-lattice  relative to some good BONG. Define the \textit{$R$-invariants} by $ R_{i}=R_{i}(M):=\ord(a_{i})$, where $1\le i\le m$, and the \textit{$\alpha$-invariants} by $\alpha_{i}=\alpha_{i}(M):=\min\{T_{0}^{(i)},\ldots, T_{m-1}^{(i)}\} $, where 
 \begin{align}\label{T}
 	T_{0}^{(i)}=\dfrac{R_{i+1}-R_{i}}{2}+e, \quad T_{j}^{(i)}=
 	\begin{cases}
 		R_{i+1}-R_{j}+d(-a_{j}a_{j+1}) &\text{if $ 1\le j\le i $}\,, \\
 		R_{j+1}-R_{i}+d(-a_{j}a_{j+1})  &\text{if $ i\le j\le m-1 $}.
 	\end{cases}
 \end{align}
  A concise formula is given in \cite[Corollary 2.5(i)]{beli_Anew_2010} for $ \alpha_{i} $:
 \begin{align}\label{eq:alpha-defn}
 	\alpha_{i}=\min\{\dfrac{R_{i+1}-R_{i}}{2}+e,R_{i+1}-R_{i}+d[-a_{i}a_{i+1}]\},
 \end{align}
 where $d[-a_{i}a_{i+1}]:=\min\{d(-a_{i}a_{i+1}),\alpha_{i-1},\alpha_{i+1}\}$.
 
Recall from \cite[Corollary 4.4(iv)]{beli_integral_2003} that $
	\ord\,(\mathfrak{s}M)= \min\{R_{1},(R_{1}+R_{2})/2\}$.
Also, if $ R_{1}+R_{2} $ is odd, then \cite[Corollary 2.3(i)]{HeHu2} implies $ R_{2}-R_{1}> 0$, i.e. $R_{1}<(R_{1}+R_{2})/2$. So the minimum is $R_{1}$. Hence $M$ is classic integral if and only if $ \mathfrak{s}M\subseteq \mathcal{O}_{F} $, which is equivalent to  
\begin{align}
	&R_{1}\ge 0\quad\text{and} \label{eq:integralcondition} \\
		 & R_{1}+R_{2} \ge 0. \label{eq:classicallyintegral}
\end{align} 
We also have
\begin{align}\label{eq:classicallyintegralR2}
	R_{2} \ge \max\{-\dfrac{d(-a_{1}a_{2})}{2},-e\}
\end{align}
by \eqref{eq:BONGs} and \eqref{eq:classicallyintegral}.

We collect some results for the invariants $ R_{i} $ and $ \alpha_{i} $. 
\begin{prop}\label{prop:Rproperty}
	Suppose $ 1\le i\le m-1 $. 
	\begin{enumerate}
		\item[\rm (i)] $ R_{i+1}-R_{i}>2e$ (resp. $ =2e $, $ <2e $) if and only if $ \alpha_{i}>2e $ (resp. $ =2e $, $ <2e $).
		
		\item[\rm (ii)] If $ R_{i+1}-R_{i}\ge 2e$ or $ R_{i+1}-R_{i}\in \{-2e,2-2e,2e-2\} $, then $ \alpha_{i}=(R_{i+1}-R_{i})/2+e $.
		
		\item[\rm (iii)]  If $ R_{i+1}-R_{i}\le 2e $, then $ \alpha_{i}\ge R_{i+1}-R_{i} $. Also, the equality holds if and only if $ R_{i+1}-R_{i}=2e $ or $ R_{i+1}-R_{i} $ is odd.
		
		\item[\rm (iv)] If $ R_{i+1}-R_{i} $ is odd, then $ \alpha_{i}=\min\{(R_{i+1}-R_{i})/2+e,R_{i+1}-R_{i}\} $ and hence $ R_{i+1}-R_{i}> 0 $.
		
		\item[\rm (v)] $ R_{i}+\alpha_{i} $ is increasing and $ -R_{i+1}+\alpha_{i} $ is decreasing.   
		
		\item[\rm (vi)] If $ R_{i}+R_{i+1}=R_{j}+R_{j+1} $ for some $ j $ with $ i\le j\le m-1 $, then $ R_{i}+\alpha_{i}=\ldots=R_{j}+\alpha_{j} $.
	\end{enumerate}  
\end{prop}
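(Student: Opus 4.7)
The plan is to derive all six assertions directly from the concise formula
\begin{equation*}
\alpha_i \;=\; \min\{A_i, B_i\}, \qquad A_i := (R_{i+1}-R_i)/2 + e, \qquad B_i := R_{i+1}-R_i + d[-a_ia_{i+1}],
\end{equation*}
together with the BONG constraints \eqref{eq:BONGs} and two basic facts about the quadratic defect: $d(c) \geq 0$ always, and $d(c) = 0$ iff $\ord c$ is odd (so in particular $d(c) \geq 1$ whenever $c$ is a non-square with even valuation). Part (i) is then immediate, since $A_i - 2e = (R_{i+1}-R_i-2e)/2$ has the same sign as $R_{i+1}-R_i - 2e$ and $B_i \geq R_{i+1}-R_i$ from $d[\cdot] \geq 0$; checking each sign of $R_{i+1}-R_i - 2e$ in turn yields the equivalence. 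Part (iii) is the parallel comparison with target $R_{i+1}-R_i$: under the hypothesis $R_{i+1}-R_i \leq 2e$ both $A_i, B_i \geq R_{i+1}-R_i$, and the equality condition forces either $R_{i+1}-R_i = 2e$ or $d[-a_ia_{i+1}] = 0$; the latter reduces to $R_{i+1}-R_i$ being odd after excluding the auxiliary cases $\alpha_{i \pm 1} = 0$, which one rules out by the good-BONG inequality $R_{i-1} \leq R_{i+1}$ (it forbids $R_i - R_{i-1} = -2e$ whenever $R_{i+1}-R_i < 2e$). Part (iv) is then immediate: odd parity collapses $B_i$ to $R_{i+1}-R_i$, yielding the displayed minimum, and the strict positivity follows because \eqref{eq:BONGs} combined with $d(-a_ia_{i+1}) = 0$ excludes $R_{i+1}-R_i \leq 0$ at odd parity.

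Part (ii) is the most delicate and is where I expect the main obstacle. The case $R_{i+1}-R_i \geq 2e$ is trivial since then $A_i \leq R_{i+1}-R_i \leq B_i$. In the three boundary cases $R_{i+1}-R_i \in \{-2e, 2-2e, 2e-2\}$ one must verify $B_i \geq A_i$, i.e.
\begin{equation*}
d[-a_ia_{i+1}] \;\geq\; e - (R_{i+1}-R_i)/2.
\end{equation*}
The raw defect $d(-a_ia_{i+1})$ meets this bound thanks to \eqref{eq:BONGs} (tightest at $R_{i+1}-R_i = -2e$, forcing $d(-a_ia_{i+1}) \geq 2e$), but the truncation by $\alpha_{i-1}, \alpha_{i+1}$ in $d[\cdot]$ must still be controlled. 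This is done by applying (i) to the neighbouring indices together with the good-BONG monotonicities $R_{i-1} \leq R_{i+1}$ and $R_i \leq R_{i+2}$, which at each extreme value of $R_{i+1}-R_i$ force the adjacent jumps to be large enough that $\alpha_{i-1}$ and $\alpha_{i+1}$ meet the required lower bound.

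For (v) and (vi), I would rewrite
\begin{equation*}
R_i + \alpha_i \;=\; \min\{(R_i+R_{i+1})/2 + e,\; R_{i+1} + d[-a_ia_{i+1}]\}
\end{equation*}
and dually $-R_{i+1} + \alpha_i = \min\{-(R_i+R_{i+1})/2 + e,\; -R_i + d[-a_ia_{i+1}]\}$. Monotonicity of the first terms in both formulas reduces to the good-BONG inequality $R_i \leq R_{i+2}$, while the second-term monotonicities follow by comparing the truncated defects of consecutive pairs (which share a common $\alpha$ in their truncations). Part (vi) is then a pinching consequence of (v): combining monotonicity of $R_i + \alpha_i$ with anti-monotonicity of $-R_{i+1} + \alpha_i$ yields
\begin{equation*}
0 \;\leq\; (R_j + \alpha_j) - (R_i + \alpha_i) \;\leq\; (R_j - R_i) + (R_{j+1} - R_{i+1}) \;=\; 0
\end{equation*}
under the hypothesis $R_i + R_{i+1} = R_j + R_{j+1}$, and monotonicity then forces $R_k + \alpha_k$ to be constant on the whole interval $[i, j]$.
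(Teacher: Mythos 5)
The paper itself gives no proof of this proposition: it is imported verbatim from \cite[Propositions 2.3--2.5]{HeHu2}, which in turn rest on Beli's results, so there is no in-paper argument to compare against. Your overall architecture is reasonable, and parts (i), (iii), (iv) and the pinching argument for (vi) are correct as sketched. However, two of your steps do not go through as described.

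First, in part (ii) the boundary case $R_{i+1}-R_{i}=2-2e$ is not handled by ``applying (i) to the neighbouring indices.'' Here $A_i=1$, so you need $d[-a_ia_{i+1}]\ge 2e-1$, hence $\alpha_{i\pm1}\ge 2e-1$. Goodness only forces the adjacent jumps to satisfy $R_i-R_{i-1}\ge 2e-2$ and $R_{i+2}-R_{i+1}\ge 2e-2$, and when such a jump equals $2e-2$ part (i) tells you $\alpha_{i\mp}<2e$ --- the wrong direction --- while part (iii) gives only $\ge 2e-2$. Closing the gap requires either a simultaneous induction with (ii) at the neighbouring index, or the finer facts that $\alpha$ is a nonnegative integer on $[0,2e]$ with $\alpha=0$ only at jump $-2e$ (Proposition \ref{prop:alphaproperty}(i)(ii)) together with the fact that $d(c)$ at even order takes only odd values below $2e$; neither is supplied by the tools you name. (The same granularity fact, not BONG alone, is what upgrades $d(-a_ia_{i+1})\ge 2e-2$ to $\ge 2e-1$ in that case.) Second, in part (v) your justification of $R_i+\alpha_i\le R_{i+1}+\alpha_{i+1}$ must show $R_i+\alpha_i\le R_{i+2}+d[-a_{i+1}a_{i+2}]$, where $d[-a_{i+1}a_{i+2}]=\min\{d(-a_{i+1}a_{i+2}),\alpha_i,\alpha_{i+2}\}$. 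Only the branch realized by $\alpha_i$ is disposed of by ``a common $\alpha$ in the truncations''; the branches realized by $\alpha_{i+2}$ or by the raw defect need separate estimates (e.g.\ via $R_{i+1}\le R_{i+3}$, or by reverting to Definition \ref{defn:alpha}, in which $R_{i+2}+d(-a_{i+1}a_{i+2})$ literally occurs as one of the terms in the minimum defining $R_i+\alpha_i$, so the inequality is immediate). This last observation is the real reason the standard proofs work from the full Definition \ref{defn:alpha} rather than from the two-term formula \eqref{eq:alpha-defn}: your decision to use only the concise formula is what creates both obstructions.
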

\begin{proof}
	See \cite[Corollaries 2.8(i), 2.9(i), Lemma 2.7(iii), Corollary 2.9(ii), Lemma 2.2, Corollary 2.3(i)]{beli_Anew_2010} for (i)-(vi).
\end{proof} 
\begin{prop}\label{prop:alphaproperty}
		Suppose $ 1\le i\le m-1 $.
		\begin{enumerate}
			\item[\rm (i)]  Either $ 0\le \alpha_{i}\le 2e $ and $ \alpha_{i}\in \mathbb{Z} $, or $ 2e<\alpha_{i}<\infty $ and $ 2\alpha_{i}\in \mathbb{Z} $; thus $ \alpha_{i}\ge 0 $.
			
			\item[\rm (ii)] $ \alpha_{i}=0 $ if and only if $ R_{i+1}-R_{i}=-2e $.
				
			\item[\rm (iii)] $ \alpha_{i}=1 $ if and only if either $ R_{i+1}-R_{i}\in \{2-2e,1\} $, or $ R_{i+1}-R_{i}\in [4-2e,0]^{E} $ and $ d[-a_{i}a_{i+1}]=R_{i}-R_{i+1}+1 $.
			
			\item[] In particular, if $e=1$, then $\alpha_{i}=1$ if and only if $R_{i+1}-R_{i}\in \{0,1\}$.
			
			\item[\rm (iv)] If $ \alpha_{i}=0 $, i.e., $ R_{i+1}-R_{i}=-2e $, then $   d[-a_{i}a_{i+1}]\ge 2e $.
			\item[\rm (v)] If $ \alpha_{i}=1 $, then $ d[-a_{i}a_{i+1}]\ge R_{i}-R_{i+1}+1 $. Also, the equality holds if $ R_{i+1}-R_{i}\not=2-2e $. 
			
			\item[\rm (vi)] If $R_{i+1}-R_{i}+d(-a_{i}a_{i+1})=1$, then $\alpha_{i}=1$.
		\end{enumerate}
\end{prop}
 \begin{proof}
 	See \cite[Corollary 2.8(iii), Lemma 2.7(i)]{beli_Anew_2010} for (i) and (ii); see \cite[Lemma 2.8]{beli_Anew_2010} for (iii)-(v). For (vi), since $R_{i+1}-R_{i}=1-d(-a_{i}a_{i+1})\not=-2e$, we have $\alpha_{i}\ge 1$ by (ii). Also, $ \alpha_{i}\le R_{i+1}-R_{i}+d(-a_{i}a_{i+1})=1$ by \eqref{eq:alpha-defn}. Thus $\alpha_{i}=1$.
 \end{proof}
 \begin{prop}\label{prop:Ralphaproperty3}
	Suppose that $M$ is classic integral.
	\begin{enumerate}
		\item[\rm (i)] We have $ R_{j}\ge R_{i}\ge 0 $ for all  $ i,j\in [1,\,m]^O$ with $ j\ge i $ and $ R_{j}\ge R_{i}\ge -e $ for all $ i,j \in [1,\,m]^E$ with $ j\ge i $. Also, $R_{i}+R_{i+1}\ge 0$ for $1\le i\le m-1$.
	
		\item[\rm (ii)]	If $ R_{j}=0 $ for some $ j\in [1,m]^{O} $, then $ R_{i}=0 $ for all  $ i\in [1,j]^{O} $ and $ R_{i} $ is even for all $ 1\le i\le j $. 
		
		\item[\rm (iii)] If $ R_{j}=-e$ for some $ j\in [1,m]^{E} $, then for each $ i\in [1,j]^{E} $, we have $ R_{i-1}=e $, $ R_{i}=-e $ and $ d(-a_{i-1}a_{i})\ge d[-a_{i-1}a_{i}]\ge 2e $. Consequently, $ d[(-1)^{j/2}a_{1,j}]\ge 2e $.
				
		\item[\rm (iv)] If $R_{1}=0$, then $R_{i}$ is nonnegative for $1\le i\le m$; if moreover $R_{j}+R_{j+1}\le 0$ for some $1\le j\le m-1$, then $R_{i}=0$ for $1\le i\le j$. 
		
		\item[\rm (v)] If $R_{j}=R_{j+1}=0$ for some $1\le j\le m-1$, then $R_{i}=0$ for $1\le i\le j+1$.
		
		\item[\rm (vi)] Suppose that $1\le j\le m-1$ and $R_{i}=0$ for $1\le i\le j$. If $\alpha_{k}\le 1$ for some $1\le k\le j$, then $\alpha_{i}=1$ for $1\le i\le j-1$.
	\end{enumerate}
\end{prop}
\begin{proof}
	(i) See \cite[Proposition 2.7(i)]{HeHu2} for the first statement. For the second one, we have $R_{i}+R_{i+1}\ge R_{1}+R_{2}\ge 0$ from the first statement and \eqref{eq:classicallyintegral}.
	
	(ii) See \cite[Proposition 2.7(ii)]{HeHu2}.
	
	(iii) For each $i\in [1,j]^{E}$, by (i) and \eqref{eq:classicallyintegral}, we have $R_{i-1}+R_{j}\ge R_{1}+R_{2}\ge 0$ and hence $R_{i-1}\ge -R_{j}=e$. On the other hand, by  (i) and \eqref{eq:BONGs}, we also have $R_{i-1}\le R_{j-1}\le R_{j}+2e=e$. Hence $R_{i-1}=e$. So $-e\le R_{i}\le -R_{i-1}=-e$ by (i), i.e. $R_{i}=-e$. 
	    
	By Proposition \ref{prop:alphaproperty}(iv), we have $d(-a_{i-1}a_{i})\ge d[-a_{i-1}a_{i}]\ge 2e$, so $ d[(-1)^{j/2}a_{1,j}]\ge 2e $ by the domination principle.
	 
	(iv) Since $R_{1}=0$, it follows that $R_{2}\ge 0$ from \eqref{eq:classicallyintegral}. By the first part of (i), all $R_{i}$ are nonnegative. Hence if also $R_{j}+R_{j+1}\le 0$, then $ R_{i}=0$ for $1\le i\le j+1$.
	
	(v)	Since $R_{j}=R_{j+1}=0$ with odd $j$ or $j+1$, we have $R_{1}=0$ by (ii). Since also $R_{j}+R_{j+1}=0$, we obtain $R_{i}=0$ for $1\le i\le j$ by (iv).
	
	 (vi) Since $R_{1}=R_{2}=0$, Propositions \ref{prop:alphaproperty}(ii) implies $\alpha_{1}\ge 1$. If $\alpha_{k}\le 1$, then 
	  \begin{align*}
	  	1\le \alpha_{1}=\alpha_{1}+R_{1}\le \alpha_{k}+R_{k}=\alpha_{k}\le 1
	  \end{align*}
	   by Proposition \ref{prop:Rproperty}(v) and thus $\alpha_{1}=\alpha_{k}=1$. If $k\le j-1$, then we are done by Proposition \ref{prop:Rproperty}(vi). If $k=j$, for $1\le i\le j-1$, then
	\begin{align*}
	1=\alpha_{1}=\alpha_{1}+R_{1}\le \alpha_{i}+R_{i}\le  \alpha_{j}+R_{j}=\alpha_{k}+R_{k}=\alpha_{k}=1
	\end{align*}
	by Proposition \ref{prop:Rproperty}(v). This implies $\alpha_{i}=\alpha_{i}+R_{i}=1$ for $1\le i\le j-1 $.
\end{proof}

Let $ N\cong \prec b_{1},\cdots, b_{n}\succ $ be another $ \mathcal{O}_{F} $-lattice relative to some good BONG and $ n\le m $. Write  $ S_{i}=R_{i}(N) $ and $ \beta_{i}=\alpha_{i}(N) $.
For $ 0\le i,j\le m $, we denote by
\begin{align*}
d[ca_{1,i}b_{1,j}]=\min\{d(ca_{1,i}b_{1,j}),\alpha_{i},\beta_{j}\}\quad c\in F^{\times},
\end{align*}
where $ \alpha_{i} $ (resp. $\beta_{j}$) is ignored if $ i\in \{0,m\} $ (resp. if $ j\in\{0,n\} $). For any $ 1\le i\le \min\{m-1,n\} $, we define 
\begin{multline*}
A_{i}=A_{i}(M,N):=\min\{\dfrac{R_{i+1}-S_{i}}{2}+e,R_{i+1}-S_{i}+d[-a_{1,i+1}b_{1,i-1}],\\R_{i+1}+R_{i+2}-S_{i-1}-S_{i}+d[a_{1,i+2}b_{1,i-2}]\},
\end{multline*}
where the term $ R_{i+1}+R_{i+2}-S_{i-1}-S_{i}+d[a_{1,i+2}b_{1,i-2}] $ is ignored if $ i\in \{1,m-1\} $.
If $ n\le m-2 $, we define
\begin{align*}
S_{n+1}+A_{n+1}:=\min\{R_{n+2}+d[-a_{1,n+2}b_{1,n}],R_{n+2}+R_{n+3}-S_{n}+d[a_{1,n+3}b_{1,n-1}]\},
\end{align*}
where the term $R_{n+2}+R_{n+3}-S_{n}+d[a_{1,n+3}b_{1,n-1}]$ is ignored if $ n=m-2 $.

Now, we are ready to formulate the representation theorem for two lattices by Beli (cf. \cite[Theorem  4.5]{beli_representations_2006} and \cite[Theorem 2.8]{HeHu2}).
\begin{thm}\label{thm:beligeneral}
	Suppose $  n\le m$. Then $ N\rep M $ if and only if  $ FN\rep FM $ and the following conditions hold:
	\begin{enumerate}
		\item[\rm (i)] For any $ 1\le i\le n $, we have either $ R_{i}\le S_{i} $, or $ 1<i<m $ and $ R_{i}+R_{i+1}\le S_{i-1}+S_{i} $.
		
   		\item[\rm (ii)] For any $ 1\le i\le \min\{m-1,n\} $, we have $ d[a_{1,i}b_{1,i}]\ge A_{i} $.
   		
		\item[\rm (iii)] For any $ 1<i\le \min\{m-1,n+1\} $, if
		\begin{align}\label{eq:assumption(3)(ii)'}
			R_{i+1}>S_{i-1} \quad
			\text{and}\quad d[-a_{1,i}b_{1,i-2}]+d[-a_{1,i+1}b_{1,i-1}]>2e+S_{i-1}-R_{i+1},
		\end{align}
		then $ [b_{1},\ldots, b_{i-1}]\rep [a_{1},\ldots,a_{i}] $.
		
		\item[\rm (iv)] For any $ 1<i\le \min\{m-2,n+1\} $ such that $ S_{i}\ge R_{i+2}>S_{i-1}+2e\ge R_{i+1}+2e$, we have $ [b_{1},\ldots,b_{i-1}]\rep [a_{1},\ldots,a_{i+1}] $. (If $ i=n+1 $, the condition $ S_{i}\ge R_{i+2} $ is ignored.)
	\end{enumerate} 
\end{thm}
Recall from \cite[Definition 3.1]{HeHu2} that for $ c\in F^{\times}\backslash (F^{\times 2}\cup \Delta F^{\times 2}) $, write $\delta=c\pi^{-\ord(c)}\in \mathcal{O}_{F}^{\times}$ and let $\delta=s^{2}(1+r\pi^{d(c)})$,
with $ r,s\in \mathcal{O}_{F}^{\times} $, when $\ord (c) $ is even. Then we put
\begin{align}\label{defn:csharp}
	c^{\#}:=
	\begin{cases}
		\Delta    &\text{if  $ \ord (c) $ is odd,}\\
		1+4\rho r^{-1}\pi^{-d(c)}  &\text{if $ \ord (c) $ is even.}
	\end{cases}
\end{align}
\begin{defn}\label{defn:classicallattices}
 Let $ n $ be an integer and $ n\ge 2 $. Write $ \omega=1+\pi $ (and thus $ \omega^{\#}=1+4\rho\pi^{-1} $ by \eqref{defn:csharp}). For $c\in F^{\times}/F^{\times 2}$, if $n$ is even, we define the rank $n$ lattices
 \begin{align*}
 	 H_{e}^{n}(c)&:=\mathbf{H}_{e}^{(n-2)/2}\perp \prec \pi^{e},-c\pi^{-e} \succ,\\
 	C_{1}^{n}(c)&:=\mathbf{H}_{0}^{(n-2)/2}\perp \prec 1,-c\succ,  \\
 	C_{2}^{n}(c)&:=\mathbf{H}_{0}^{(n-2)/2}\perp \prec c^{\#},-c^{\#}c\succ.  
 \end{align*}
If $n$ is odd, then we define the rank $n$ lattices
 \begin{align*}
 	C_{1}^{n}(c)&:=\mathbf{H}_{0}^{(n-1)/2}\perp \prec c\succ,  \\
 		C_{2}^{n}(c)&:=\mathbf{H}_{0}^{(n-3)/2}\perp \begin{cases}
 		\prec 1,-\Delta ,c\Delta\succ   &\text{if $\ord (c) $ is odd},  \\
 		\prec c\omega^{\#},-c\omega^{\#}\omega,c\omega \succ &\text{if $\ord (c) $ is even}. \\ 
 	\end{cases}   
 \end{align*}
 For each fixed $e\ge 1$, define the set $\mathcal{C}_{e}^{n}$ of the rank $n$ lattices listed in the following table.
 	\begin{center}
 	\renewcommand\arraystretch{1.8}
 	\begin{tabular}{c|c|c|c|c}
 	 \toprule[1.2pt]
		 $ n $& $e$ &  $c$   &   $H_{e}^{n}(c)$    &    \\
 		 \cline{1-4}	 	
 		\multirow{2}*{\text{Even}}	 	
 		& $1$ &$\delta\in \{1,\Delta\}$ & $\mathbf{H}_{1}^{\frac{n-2}{2}}\perp \prec \pi,-\delta\pi^{-1}\succ$ & \\
 		\cline{2-4}
 		& $\ge 2 $ &$\delta=1$ & $\mathbf{H}_{e}^{\frac{n}{2}}$  & \\
 	 \midrule[1pt]
 	     $n$ & \multicolumn{2}{c|}{$c$}   & $ C_{1}^{n}(c) $  & $ C_{2}^{n}(c) $  \\
 	     \hline
 	   	\multirow{2}*{\text{Even}}	 	
 		& \multicolumn{2}{c|}{$\delta\in \mathcal{U}_{1} $}  & $ \mathbf{H}_{0}^{\frac{n-2}{2}}\perp \prec 1,-\delta \succ$ & $ \mathbf{H}_{0}^{\frac{n-2}{2}}\perp \prec\delta^{\#},-\delta^{\#}\delta \succ$  \\ 
 		\cline{2-5}
 		& \multicolumn{2}{c|}{$ \delta\pi$, with $\delta\in \mathcal{U} $}   & $ \mathbf{H}_{0}^{\frac{n-2}{2}}\perp \prec  1,-\delta\pi\succ$ & $ \mathbf{H}_{0}^{\frac{n-2}{2}}\perp \prec\Delta,-\Delta\delta\pi\succ$  \\
 		\hline
 		\multirow{2}*{\text{Odd}} & \multicolumn{2}{c|}{$\delta\in\mathcal{U} $}  & $ \mathbf{H}_{0}^{\frac{n-1}{2}}\perp \prec \delta\succ $  &   $ \mathbf{H}_{0}^{\frac{n-3}{2}}\perp 	\prec \delta\omega^{\#},-\delta\omega^{\#}\omega,  \delta\omega \succ  $ \\
 		\cline{2-5}
 		& \multicolumn{2}{c|}{$ \delta\pi$, with $\delta\in\mathcal{U} $}   & $ \mathbf{H}_{0}^{\frac{n-1}{2}}\perp \prec\delta\pi\succ $  &   $ \mathbf{H}_{0}^{\frac{n-3}{2}}\perp \prec 1,-\Delta ,\Delta\delta\pi\succ $  \\
 		\bottomrule[1.2pt]
 	\end{tabular}
 \end{center}
\end{defn}
 The following lemma allows us to obtain the invariants of a lattice by putting together those of its components.
\begin{lem}\label{lem:BONGformaximallattice}
 	Let $ s,t $ be integers with $s\ge 0$ and $0\le t\le e$. Let $ L $ be an $ \mathcal{O}_{F} $-lattice of rank $ \ell $. If $ L\cong \prec c_{1},\ldots,c_{\ell} \succ $ relative to a good BONG and $\ord\,(c_{1})\ge t$, then $ \mathbf{H}_{t}^{s}\perp L\cong \prec \pi^{t},-\pi^{-t},\ldots, \pi^{t},-\pi^{-t}, c_{1},\ldots,c_{\ell}\succ $ relative to a good BONG.
\end{lem}
\begin{proof}
	Similar to \cite[Lemma 3.10]{HeHu2}.
\end{proof}
\begin{prop}\label{prop:translation}
	\begin{enumerate}
		\item[\rm (i)] The lattices in $\mathcal{C}_{e}^{n}$ are precisely the lattices listed in Theorem \ref{thm:classicalnuniversaldyadic15theorem}.
		
		\item[\rm (ii)]  We have
		\begin{align*}
			|\mathcal{C}_{e}^{n}|=
			\begin{cases}
				8(N\mathfrak{p})^{e}-4 (N\mathfrak{p})^{e-1}+1   &\text{if $n$ is even and $e>1$},\\
				8(N\mathfrak{p})^{e}-4 (N\mathfrak{p})^{e-1}+2    &\text{if $n$ is even and $e=1$},  \\
				8(N\mathfrak{p})^{e} &\text{if $n$ is odd}.
			\end{cases}
		\end{align*}
		
		\item[\rm (iii)] All lattices in $\mathcal{C}_{e}^{n}$ are classic integral.
	\end{enumerate} 
\end{prop}
\begin{proof}
	(i) By \cite[Corollary 2.3(ii)]{HeHu2}, we have
	\begin{align*}
	 \prec \pi^{e},-\pi^{-e}\succ\cong A(0,0)\quad\text{and}\quad  \prec \pi^{e},-\Delta\pi^{-e}\succ \cong A(2,2\rho).
	\end{align*}
	By \cite[Lemma 3.9(ii)]{HeHu2}, we have
 \begin{align*}
		\prec 1,\,-\delta\pi\succ\cong 	\langle 1,\,-\delta \pi\rangle \quad\text{and}\quad \prec \Delta,\,-\Delta\delta\pi\succ\cong \langle \Delta,\,-\Delta\delta \pi\rangle,\;\text{with\; $\delta\in \mathcal{O}_{F}^{\times}$}.
	\end{align*}
	 By \cite[Remark 3.8]{HeHu2} with $\delta\in\mathcal{O}_{F}^{\times}$ and $d(\delta)=1$, we have
		\begin{align*}
		\prec 1,-\delta\succ \cong  A(1,-(\delta-1)) \quad\text{and}\quad
		\prec \delta^{\#},-\delta^{\#}\delta\succ \cong \delta^{\#}  A(1,-(\delta-1)),
	\end{align*}
			where $ \delta^{\#}=1+4\rho(\delta-1)^{-1} $.
	In particular, we have $ \prec \omega^{\#},-\omega^{\#}\omega\succ \cong \omega^{\#}  A(1,-(\omega-1))$.
	So
	\begin{align*}
			\prec \delta\omega^{\#},-\delta\omega^{\#}\omega,\delta\omega\succ \cong \prec \delta\omega^{\#},-\delta\omega^{\#}\omega\succ\perp \prec\delta\omega\succ  \cong \delta\omega^{\#} A(1,-(\omega-1))\perp \langle \delta\omega \rangle
	\end{align*}
	by \cite[Lemma 4.3(iii)]{beli_integral_2003}.
	
	(ii) For even $n\ge 2$, note that the number of all units $ \varepsilon $ (in $ \mathcal{O}_{F}^{\times 2} $) with $ d(\varepsilon)>1 $ is $ 2(N\mathfrak{p})^{e-1} $. By \cite[63:5, 63:9]{omeara_quadratic_1963}, we count the number of the lattices in $ \mathcal{C}_{e}^{n} $ as follows, 
	\begin{align*}
		|\mathcal{C}_{e}^{n}|&=1+2|\mathcal{U}|+2|\mathcal{U}_{1}|=1+4(N \mathfrak{p})^{e}+(4(N\mathfrak{p})^{e}-4(N\mathfrak{p})^{e-1})= 8(N\mathfrak{p})^{e}-4(N\mathfrak{p})^{e-1}+1,
	\end{align*}
	when $ e>1 $. Similarly, we have $ 8(N\mathfrak{p})^{e}-4(N\mathfrak{p})^{e-1}+2$ when $ e=1 $.
	
	For odd $n\ge 3$, the spaces spanned by the lattices in $ \mathcal{C}_{e}^{n} $ exhaust all the possible quadratic spaces of dimension $ n $  (cf. \cite[Proposition 3.5(i)]{HeHu2}) and hence $ |\mathcal{C}_{e}^{n}|=4[\mathcal{O}_{F}^{\times}:\mathcal{O}_{F}^{\times 2}]=8(N\mathfrak{p})^{e} $.
	
	(iii) This follows by \eqref{eq:integralcondition}, \eqref{eq:classicallyintegral} and Lemma \ref{lem:S-invariant}(i)(ii) below.
\end{proof}
\begin{lem}\label{lem:S-invariant}
	Let $ N $ be an $ \mathcal{O}_{F} $-lattice of rank $ n\ge 2 $, $ S_{i}=R_{i}(N) $ and $\beta_{i}=\alpha_{i}(N)$.
	\begin{enumerate}
		\item[\rm (i)] Suppose that $n$ is even.
		
		\item[] If $ N=H_{e}^{n}(1) $ or $ H_{e}^{n}(\Delta) $, then $ S_{i}=e $ for $ i\in [1,n]^{O} $ and $ S_{i}=-e $ for $ i\in [1,n]^{E} $.
		
		\item[] If $ N=C_{1}^{n}(c)$ or $ C_{2}^{n}(c) $ with $c\in F^{\times}/F^{\times 2}$ and $d(c)\in \{0,1\}$, then $ S_{i}=0 $  for  $1\le i\le n-1$ and $S_{n}=1-d(c)$.
		
		\item[\rm (ii)] Suppose that $ n $ is odd.
		
		\item[] If $ N=C_{1}^{n}(c)$ or $ C_{2}^{n}(c) $ with $c\in F^{\times}/F^{\times 2}$ and $\ord\,(c)\in \{0,1\}$, then $ S_{i}=0 $  for  $1\le i\le n-1$ and $S_{n}=\ord\,(c)$.
		
		\item[\rm (iii)]  If $N=C_{1}^{n}(c)$ or $C_{2}^{n}(c)$ with $c\in F^{\times}/F^{\times 2}$ and $d(c)\in \{0,1\}$, then $ S_{i}=0 $  for  $1\le i\le n-1$, $S_{n}=1-d(c)$ and $\beta_{i}=1$ for $1\le i\le n-1$.
	\end{enumerate}
\end{lem}
\begin{proof}
 (i) and (ii) follows by Lemma \ref{lem:BONGformaximallattice}, Proposition \ref{prop:translation}(i) and \cite[Corollary 4.4(i)]{beli_integral_2003}. For (iii), we have $S_{i}=0$ for $1\le i\le n-1$ and 
 \begin{align*}
 	S_{n}=
 	\begin{cases}
 		0   &\text{if $d(c)=1$},  \\
 		1   &\text{if $d(c)=0$}
 	\end{cases}
 \end{align*}
 by (i) and (ii), noticing that $\ord\,(c)$ is odd iff $d(c)=0$. Thus $S_{n}=1-d(c)$. If $d(c)=1$, then $S_{n}-S_{n-1}=0$ and so $ \beta_{n-1}\le S_{n}-S_{n-1}+d(-b_{n-1}b_{n})=d(c)=1$ by \eqref{eq:alpha-defn}. If $d(c)=0$, then $S_{n}-S_{n-1}=1$ and so $\beta_{n-1}=1$ by Proposition \ref{prop:alphaproperty}(iii). Hence, in both cases, $\beta_{i}=1$ for $1\le i\le n-1$ by Proposition \ref{prop:Ralphaproperty3}(vi).
\end{proof}
\begin{prop}\label{prop:isotropic-ternary}
	The ternary quadratic space $[a_{1},a_{2},a_{3}]$ over $F$ is isotropic or anisotropic, according as the Hilbert symbol $(-a_{1}a_{2},-a_{1}a_{3})_{\mathfrak{p}}=1$ or $-1$.
\end{prop}
\begin{proof}
Write $V:=[a_{1},a_{2},a_{3}]$ and $S_{\mathfrak{p}}V$ for the Hasse symbol of $V$. From \cite[p.\,152]{omeara_quadratic_1963}, we have $(-1,-1)_{\mathfrak{p}}S_{\mathfrak{p}}V=(-a_{1}a_{2},-a_{1}a_{3})_{\mathfrak{p}}$. Hence $S_{\mathfrak{p}}V=(-1,-1)_{\mathfrak{p}}$ or $-(-1,-1)_{\mathfrak{p}}$, according as $(-a_{1}a_{2},-a_{1}a_{3})_{\mathfrak{p}}=1$ or $-1$. So the proposition follows by \cite[58:6]{omeara_quadratic_1963}.
\end{proof}
For $ s \in \mathbb{N} $,  we write $ \mathbb{H}^{s} $ for the orthogonal sum of $s $ copies of the hyperbolic plane $ \mathbb{H} $.
\begin{lem}\label{lem:spacerep-criterion}
	Let $n$ be an even integer and $n\ge 2$. Let $V$ be a quadratic space over $F$ and $\dim V=n+1$. 
	\begin{enumerate}
		\item[\rm (i)]  If $\det V=\varepsilon\in \mathcal{O}_{F}^{\times }$ (in $F^{\times 2}$), then $V$ cannot represent both of $\mathbb{H}^{n/2}$ and $\mathbb{H}^{(n-2)/2}\perp [\pi,-\Delta\pi]$.
		
		 \item[] Suppose $V=\mathbb{H}^{(n-2)/2}\perp U$ for some ternary space $U$ with $\det U=\varepsilon\in \mathcal{O}_{F}^{\times}$.

		\item[\rm (ii)] If $U$ is isotropic, then $V$ represents $\mathbb{H}^{n/2}$, but does not represent $\mathbb{H}^{(n-2)/2}\perp [\pi,-\Delta\pi]$.
		
		\item[\rm (iii)] If $U$ is anisotropic, then $V$ represents $\mathbb{H}^{(n-2)/2}\perp [\pi,-\Delta\pi]$, but does not represent $\mathbb{H}^{n/2}$.		
	\end{enumerate} 
\end{lem}
\begin{proof}
	For (i), assume to the contrary that $V $ represents both $\mathbb{H}^{n/2}$ and $\mathbb{H}^{(n-2)/2}\perp [\pi,-\Delta\pi]$. Then, by \cite[63:21]{omeara_quadratic_1963}, we have
	\begin{align*}
		\mathbb{H}^{(n-2)/2}\perp [1,-1]\perp [(-1)^{n/2}\varepsilon]\cong V\cong \mathbb{H}^{(n-2)/2}\perp [\pi,-\Delta\pi]	 \perp [(-1)^{n/2}\Delta\varepsilon],
	\end{align*}
	which implies $[1,-1,(-1)^{n/2}\varepsilon]\cong   [\pi,-\Delta\pi,(-1)^{n/2}\Delta\varepsilon]$
	by Witt's cancellation. Since $\varepsilon$ is a unit, by Proposition \ref{prop:isotropic-ternary}, these two spaces have opposite isotropy, a contradiction. This shows (i).
	
	For (ii) and (iii), we may assume $n=2$ by Witt's cancellation. If $U$ is isotropic, then $U\cong \mathbb{H}\perp [\varepsilon]$ represents $\mathbb{H}$, but does not represent $[\pi,-\Delta\pi]$ by (i). If $U$ is anisotropic, then clearly $U$ cannot represent $\mathbb{H}$. Since $\det U=\varepsilon$, $U\cong [\pi, -\Delta\pi,-\Delta\varepsilon]$ by \cite[Proposition 3.5]{HeHu2}, representing $[\pi,-\Delta\pi]$.
\end{proof}


\section{Sufficient conditions for $N\rep M$}\label{sec:sufficient conditions}
In the remainder of the paper, all lattices under consideration will be assumed to be classic integral. Thus ``$n$-universal" means ``classic $n$-universal".

Let $ M\cong \prec a_{1},\ldots,a_{m} \succ$ be an $ \mathcal{O}_{F} $-lattice of rank $ m\ge n+1\ge 3 $ relative to some good BONG. Let $ R_{i}=R_{i}(M) $ for $ 1\le i\le m $ and $ \alpha_{i}=\alpha_{i}(M) $ for $ 1\le i\le m-1 $. For convenience, whenever an $ \mathcal{O}_{F} $-lattice $ N $ of rank $ n $ is discussed, we always assume $ N\cong \prec b_{1},\ldots, b_{n}\succ $ relative to some good BONG and denote by $ S_{i}=R_{i}(N) $ and $ \beta_{i}=\alpha_{i}(N) $.
\begin{lem}\label{lem:B1-B3-B4-j}
	Let $j$ be a positive integer.
	\begin{enumerate}
		\item[\rm (i)] If $j$ is odd and $R_{j}=0$, then Theorem \ref{thm:beligeneral}(i) holds at the index $j$.
		
		\item[\rm (ii)] If $1<j<m$ and $R_{j}+R_{j+1}=0$, then Theorem \ref{thm:beligeneral}(i) holds at the index $j$.
		
		\item[\rm (iii)] If $j$ is even and $R_{j+1}=0$, then $j$ is not essential (in the sense of \cite[Definition 4.7]{beli_representations_2006}). Thus Theorem \ref{thm:beligeneral}(iii) holds at the index $j$.
		
		\item[\rm (iv)] If $1<j<m$ and $R_{j+1}+R_{j+2}=0$, then $j$ is not essential. Thus Theorem \ref{thm:beligeneral}(iii) holds at the index $j$.

		\item[\rm (v)] If $R_{j+2}-R_{j+1}\le 2e$, then Theorem \ref{thm:beligeneral}(iv) holds at the index $j$.
	\end{enumerate} 
\end{lem}
\begin{proof}
	  Since $j$ is odd, by Proposition \ref{prop:Ralphaproperty3}(i), we have $R_{j}= 0\le S_{j}$ and thus (i) is proved. By Proposition \ref{prop:Ralphaproperty3}(i) again, we have $R_{j}+R_{j+1}=0\le S_{j}+S_{j-1}$. This shows (ii). 
	  
	  For (iii) and (iv), we similarly have $R_{j+1}=0\le S_{j-1}$ and $R_{j+1}+R_{j+2}=0\le S_{j-2}+S_{j-1}$, respectively. Hence the index $ j $ is not essential in both cases, so Theorem \ref{thm:beligeneral}(iii) holds trivially at $ j $ by \cite[Lemma 4.9]{beli_representations_2006}. For (v), it is trivial.
\end{proof}
\begin{lem}\label{lem:B2-j}
	 Let $j$ be a positive integer. Suppose $R_{i}=0$ for $1\le i\le j+1$.
	
	 If $j\in [1,m-1]^{E}$, then Theorem \ref{thm:beligeneral}(ii) holds at the index $i$ with $2\le i\le \min\{j-1,n\}$.
\end{lem}
\begin{proof}
    From Lemma \ref{lem:B1-B3-B4-j}(iii) and (iv), we see that the indices $ 2,\ldots, j $ are not essential. Hence Theorem \ref{thm:beligeneral}(ii) holds for $ i=2,\ldots, j-1 $ by \cite[Lemma 4.8]{beli_representations_2006}.
\end{proof}
\begin{lem}\label{lem:A1}
  If $R_{1}=R_{2}=0$ and $\alpha_{2}=1$, then Theorem \ref{thm:beligeneral}(ii) holds at the index $i=1$.
\end{lem}
\begin{proof}
	 If $ S_{1}\ge 1 $, since $R_{2}=0$ and $ \alpha_{2}=1 $, we have
	\begin{align*}
		A_{1}\le R_{2}-S_{1}+d[-a_{1,2}]\le R_{2}-S_{1}+\alpha_{2}=0-S_{1}+1\le 0\le d[a_{1}b_{1}].
	\end{align*}
	 If $ S_{1}=0 $, then $S_{2}-S_{1}=S_{2}\ge -e$ by \eqref{eq:classicallyintegralR2}. Hence Proposition \ref{prop:alphaproperty}(ii) implies $ \beta_{1}\ge 1 $. Since $R_{1}=0$, $\ord(a_{1}b_{1})$ is even and thus $ d(a_{1}b_{1})\ge 1 $. Combining these with $\alpha_{1}=1$, we have $d[a_{1}b_{1}]=\min\{d(a_{1}b_{1}),\alpha_{1},\beta_{1}\}=1$. Hence
		\begin{align*}
		A_{1}\le R_{2}-S_{1}+d[-a_{1,2}]\le R_{2}-S_{1}+\alpha_{2}=0-0+1=1=d[a_{1}b_{1}].
	\end{align*}
\end{proof}
\begin{lem}\label{lem:An-even}
Let $  j\in[ 1,\min\{m-1,n\}]^{E} $. Assume that either $e=1$, or $e>1$ and $ d[(-1)^{(j+2)/2}a_{1,j+2}]\le 1-R_{j+2} $. If $R_{j}=R_{j+1}=0$ and $\alpha_{j}=1$, then Theorem \ref{thm:beligeneral}(ii) holds at the index $ j $.
\end{lem}
\begin{proof}
 Since $R_{j}=R_{j+1}=0$, Proposition \ref{prop:Ralphaproperty3}(v) implies that $R_{i}=0$ for $1\le i\le j+1$.
	 Now, we have
	\begin{align*}
		A_{j}=\min\{\dfrac{-S_{j}}{2}+e,-S_{j}+d[-a_{1,j+1}b_{1,j-1}],R_{j+2}-S_{j-1}-S_{j}+d[a_{1,j+2}b_{1,j-2}]\}.
	\end{align*}
We may assume $ S_{j}<\min\{d[-a_{1,j+1}b_{1,j-1}],2e\} $ (otherwise, $ A_{j}\le 0\le d[a_{1,j}b_{1,j}] $). Since $j-1$ is odd, $S_{j-1}\ge 0$ by Proposition \ref{prop:Ralphaproperty3}(i). Then, by the assumption and \eqref{eq:alpha-defn}, we have
\begin{align*}
	S_{j}<d[-a_{1,j+1}b_{1,j-1}]\le 	\beta_{j-1}\le S_{j-1}+\beta_{j-1}\le  S_{j}+d[-b_{j-1}b_{j}],
\end{align*}
which implies $ d[-b_{j-1}b_{j}]>0 $. It follows that $\beta_{j}\ge 1$ (if $\beta_{j}$ exists). Since $\alpha_{j}=1$, we have $ d[a_{1,j}b_{1,j}]=\min\{d(a_{1,j}b_{1,j}),1\} $ regardless of the existence of $\beta_{j}$. 

Firstly, if $j>2$, then, by the domination principle, we have
	\begin{align}\label{eq:bj-1bj}
		d[-b_{k-1}b_{k}]=\min_{i\in [1,j-2]^{E}}\{d[-b_{i-1}b_{i}]\}\le d[(-1)^{(j-2)/2}b_{1,j-2}]
	\end{align} 
	for	some $ k\in [1,j-2]^{E} $. Next, we consider two cases according as the parity of the sum $\sum_{i=1}^{j}S_{i}$.
	
	\textbf{Case I: $ \sum\limits_{i=1}^{j}S_{i} $ is odd}
	
	Since $\ord(a_{1,j}b_{1,j})$ is odd, $ d[a_{1,j}b_{1,j}]=0 $. Note that Proposition \ref{prop:Rproperty}(vi) implies $\alpha_{i}=\alpha_{j}=1$ for $1\le i\le j$. Then for $i\in [1,j]^{E}$, since $\ord(a_{i-1}a_{i})$ is even, $d(-a_{i-1}a_{i})>0$ and so $d[-a_{i-1}a_{i}]=\min\{d(-a_{i-1}a_{i}),\alpha_{i-2},\alpha_{i}\}>0$. Combining with $d[-b_{j-1}b_{j}]>0$, we have $ d[(-1)^{(j-2)/2}b_{1,j-2}]=d[a_{1,j}b_{1,j}]=0$ by the domination principle. Since $ \sum_{i=1}^{j}S_{i} $ is assumed to be odd, $ j>2 $ and thus \eqref{eq:bj-1bj} holds. Also, $S_{k-1}\ge 0$ by Proposition \ref{prop:Ralphaproperty3}(i). Hence we conclude that
	\begin{align*}
		-S_{j}+\beta_{j-1}\le -S_{k}+\beta_{k-1}\le S_{k-1}-S_{k}+\beta_{k-1}\underset{\eqref{eq:alpha-defn}}{\le} d[-b_{k-1}b_{k}]\underset{\eqref{eq:bj-1bj}}{\le} d[(-1)^{(j-2)/2}b_{1,j-2}]=0,
	\end{align*} 
	where the first inequality holds by Proposition \ref{prop:Rproperty}(v). So $ \beta_{j-1}\le S_{j}<d[-a_{j+1}b_{1,j-1}] $, which contradicts $ d[-a_{1,j+1}b_{1,j-1}]\le \beta_{j-1} $.
	
	\textbf{Case II: $ \sum\limits_{i=1}^{j}S_{i} $ is even}
	
	Since $\ord(a_{1,j}b_{1,j})$ is even, $ d(a_{1,j}b_{1,j})\ge 1 $ and thus $ d[a_{1,j}b_{1,j}]=\min\{d(a_{1,j}b_{1,j}),1\}=1 $.
	
	\textbf{SubCase I: $ e>1 $} 
	
 	If $d[(-1)^{(j+2)/2}a_{1,j+2}]\not=d[(-1)^{(j-2)/2}b_{1,j-2}]$, then 
	 \begin{align*}
 		d[a_{1,j+2}b_{1,j-2}]=\min\{d[(-1)^{(j+2)/2}a_{1,j+2}],d[(-1)^{(j-2)/2}b_{1,j-2}]\}\le d[(-1)^{(j+2)/2}a_{1,j+2}].
	 \end{align*}
	Hence	 
	\begin{align*}
		A_{j}&\le R_{j+2}+d[a_{1,j+2}b_{1,j-2}]  \quad\text{(as $ S_{j}+S_{j-1}\ge 0 $ by Proposition \ref{prop:Ralphaproperty3}(i))}\\
		&\le R_{j+2}+d[(-1)^{(j+2)/2}a_{1,j+2}]  \\
		&\le R_{j+2}+(1-R_{j+2})=1 \quad\text{(by the hypothesis)}.
	\end{align*}
	Note that if $j=2$, then $b_{1,j-2}=1$ and the argument also holds in this case.
	
	If $d[(-1)^{(j+2)/2}a_{1,j+2}]=d[(-1)^{(j-2)/2}b_{1,j-2}]$, we may assume $j>2$. Then
	\begin{align*}
		d[-b_{k-1}b_{k}]\underset{\eqref{eq:bj-1bj}}{\le}d[(-1)^{(j-2)/2}b_{1,j-2}]=d[(-1)^{(j+2)/2}a_{1,j+2}]\le 1-R_{j+2}.
	\end{align*}
	Since both $k-1$ and $j+2$ is odd, we have $S_{k-1}\ge 0$ and $R_{j+2}\ge 0$ by Proposition \ref{prop:Ralphaproperty3}(i). Hence
	\begin{align*}
		A_{j}\le -S_{j}+d[-a_{1,j+1}b_{1,j-1}]&\le -S_{j}+\beta_{j-1}\\
		&\le  -S_{k}+\beta_{k-1} \quad\text{(by Proposition \ref{prop:Rproperty}(v))}\\
		&\underset{\eqref{eq:alpha-defn}}{\le} -S_{k-1}+d[-b_{k-1}b_{k}]\le 1-R_{j+2}\le 1.
	\end{align*}
	
	\textbf{SubCase II: $ e=1 $}
	
	In this case, $   S_{j}\ge -e=-1$ by Proposition \ref{prop:Ralphaproperty3}(i). If $ S_{j}\ge 0 $, then
	\begin{align*}
		A_{j}\le \dfrac{-S_{j}}{2}+e\le 0+1=1.
	\end{align*}
	If $ S_{j}=-1 $, then $S_{j-1}=1$ by Proposition \ref{prop:Ralphaproperty3}(iii). Thus $ S_{j}-S_{j-1}=-2=-2e $ and so $ d[-a_{1,j+1}b_{1,j-1}]=\beta_{j-1}=0 $ by Proposition \ref{prop:alphaproperty}(ii). Hence
	\begin{align*}
		A_{j}\le -S_{j}+d[-a_{1,j+1}b_{1,j-1}]=-S_{j} =1.
	\end{align*}	
	With above discussion, we conclude that $ A_{j}\le d[a_{1,j}b_{1,j}] $.
\end{proof}
\begin{lem}\label{lem:d[-ai+1ai+2]=1-Ri+2}
  Let $  i\in [1,\min\{m-2,n\}]^{E} $. If $R_{i}=R_{i+1}=0$ and $ d[(-1)^{(i+2)/2}a_{1,i+2}]=1-R_{i+2} $, then one of the following statements holds:
	\begin{enumerate}
		\item[\rm (i)] $ d[-a_{1,i+2}b_{1,i}]=1-R_{i+2} $.
		
		\item[\rm (ii)] There exists some $ j\in [1,i]^{E} $ such that $ R_{i+2}\le S_{j} $ and $ \beta_{k}\le S_{k+1}-S_{j-1}+1-R_{i+2}\le S_{k+1}+1-R_{i+2} $ for each $ j-1\le k\le n-1 $.
	\end{enumerate}
\end{lem}
\begin{proof}
	Similar to \cite[Lemma 2.10(iii)]{HeHu2}.
\end{proof}
\begin{lem}\label{lem:An-odd}
	  Suppose that $n\ge 3$ is odd. If $ R_{n-1}=R_{n}=0 $, $ \alpha_{n}=1 $ and $d[(-1)^{(n+1)/2}a_{1,n+1}]=1-R_{n+1}$, then $R_{n+1}-S_{n}+d[-a_{1,n+1}b_{1,n-1}]\le   d[a_{1,n}b_{1,n}]$. Thus Theorem \ref{thm:beligeneral}(ii) holds at the index $ n $. 
\end{lem}
\begin{proof}
	 Since $R_{n-1}=R_{n}=0$, Proposition \ref{prop:Ralphaproperty3}(v) implies that $R_{i}=0$ for $1\le i\le n$. Then the argument is similar to \cite[Lemma 2.11]{HeHu2}.
\end{proof}
\begin{lem}\label{lem:An-odd-2}
	Suppose that $n\ge 3$ is odd. If $ R_{n-1}=R_{n}=0 $, $ \alpha_{n}=1 $ and $d[(-1)^{(n+1)/2}a_{1,n+1}]=1-R_{n+1}$, then $d[-a_{1,n+1}b_{1,n-1}] \le  S_{n}-R_{n+1}+d[a_{1,n}b_{1,n}]\le S_{n}-R_{n+1}+1$.
	
	If moreover $d[-a_{1,n+2}b_{1,n}]=0$ and $d[-a_{1,n+1}b_{1,n-1}]+d[-a_{1,n+2}b_{1,n}]>2e+S_{n}-R_{n+2}$, then $ R_{n+2}-R_{n+1}>2e-1  $.
\end{lem}
\begin{proof}
	Since $\alpha_{n}=1$, Lemma \ref{lem:An-odd} implies that $ R_{n+1}-S_{n}+d[-a_{1,n+1}b_{1,n-1}]\le d[a_{1,n}b_{1,n}]\le \alpha_{n}=1 $. Rewriting the inequality yields the first statement.
		  
	From the assumption, we have
		  \[
		  d[-a_{1,n+1}b_{1,n-1}]=d[-a_{1,n+1}b_{1,n-1}]+d[-a_{1,n+2}b_{1,n}]>2e+S_{n}-R_{n+2}.
		  \]
	Combining with the first statment, we see that $ R_{n+2}-R_{n+1}>2e-1 $.
\end{proof}
\begin{lem}\label{lem:B3-n+1}
	Suppose that $n\ge 2$ is even. If $R_{n}=R_{n+1}=0$, $\alpha_{n+1}=1$ and $d[(-1)^{(n+2)/2}a_{1,n+2}]=1-R_{n+2}$, then Theorem \ref{thm:beligeneral}(iii) holds at the index $n+1$.
\end{lem}	
\begin{proof}
	Assume that $ R_{n+2}>S_{n} $ and $ d[-a_{1,n+1}b_{1,n-1}]+d[-a_{1,n+2}b_{1,n}]>2e+S_{n}-R_{n+2} $. If $ d[-a_{1,n+2}b_{1,n}]\not=1-R_{n+2} $, then $ R_{n+2}\le S_{j}\le S_{n} $ for some $ j\in [1,n]^{E} $ by Lemma \ref{lem:d[-ai+1ai+2]=1-Ri+2} with $i=n$ and Proposition \ref{prop:Ralphaproperty3}(i), a contradiction. Hence $ d[-a_{1,n+2}b_{1,n}]=1-R_{n+2} $. Since  $\alpha_{n+1}=1$, we have $d[-a_{1,n+1}b_{1,n-1}]\le  \alpha_{n+1}=1 $. Hence
	\begin{align*}
		2-R_{n+2}=1+(1-R_{n+2})\ge d[-a_{1,n+1}b_{1,n-1}]+d[-a_{1,n+2}b_{1,n}]>2e+S_{n}-R_{n+2},
	\end{align*}
	which implies $ S_{n}<2-2e\le 0 $. Hence $ S_{n-1}>0$ by the second part of Proposition \ref{prop:Ralphaproperty3}(i) and so
	\begin{align*}
		-2e\le S_{n}-S_{n-1}<S_{n}<2-2e
	\end{align*}
	 by \eqref{eq:BONGs}. This implies $S_{n}=1-2e$ and $S_{n}-S_{n-1}=-2e$. Hence $ d[-a_{1,n+1}b_{1,n-1}]=\beta_{n-1}=0 $ by Proposition \ref{prop:alphaproperty}(ii). Therefore, we deduce that
	\begin{align*}
		1-R_{n+2}=0+(1-R_{n+2})=d[-a_{1,n+1}b_{1,n-1}]+d[-a_{1,n+2}b_{1,n}]>2e+S_{n}-R_{n+2},
	\end{align*}
	which implies $ S_{n}<1-2e $, a contradiction.
\end{proof}
\begin{lem}\label{lem:B3-n-n+1-odd}
	Suppose that  $n\ge 3$ is odd. If $R_{n-1}=R_{n}=0$, $\alpha_{n}=1$ and $d[(-1)^{(n+1)/2}a_{1,n+1}]=1-R_{n+1}$.
	\begin{enumerate}
		\item[\rm (i)] Theorem \ref{thm:beligeneral}(iii) holds at index $i=n$.
		
		\item[\rm (ii)] If moreover $R_{n+1}=0$ and $R_{n+2}\in \{0,1\}$, then Theorem \ref{thm:beligeneral}(iii) holds at index $i=n+1$.
	\end{enumerate}
\end{lem}	
\begin{proof}
	(i) Let $ N^{\prime}=\prec b_{1},\ldots,b_{n-1}\succ$ and $ \beta'_{i}=\alpha_{i}(N^{\prime})$ for $ 1\le i\le n-2 $. Comparing with \eqref{T} for $\beta_i=\alpha_i(N)$, we have 
	\[
	\beta_{i}=\min\{\beta_{i}^{\prime},S_{n}-S_{i}+d(-b_{n-1}b_{n})\}\le \beta'_i
	\]
	for $ 1\le i\le n-2 $. For $ 0\le i\le m $, $ 0\le j\le n-1 $ and $ c\in F^{\times} $, we denote by $ d^{\prime}[ca_{1,i}b_{1,j}] $ the invariant $ d[ca_{1,i}b_{1,j}] $ corresponding to $ M $ and $ N'$. Then
	\begin{align*}
		d[ca_{1,i}b_{1,j}] =\min\{d(ca_{1,i}b_{1,j}),\alpha_{i},\beta_{j}\} \quad\text{and}\quad  d^{\prime}[ca_{1,i}b_{1,j}] =\min\{d(ca_{1,i}b_{1,j}),\alpha_{i},\beta_{j}^{\prime}\}.
	\end{align*}
	Since $ \beta_{j}\le \beta_{j}^{\prime} $ for $ 1\le j\le n-2 $, we have $ d[ca_{1,i}b_{1,j}]\le d^{\prime}[ca_{1,i}b_{1,j}] $.
	
	Suppose $ R_{i+1}>S_{i-1} $ and $ d[-a_{1,i}b_{1,i-2}]+d[-a_{1,i+1}b_{1,i-1}]>2e+S_{i-1}-R_{i+1} $ for some $ 2\le i \le n $. Then
	\begin{align*}
		d^{\prime}[-a_{1,i}b_{1,i-2}]+d^{\prime}[-a_{1,i+1}b_{1,i-1}]\ge d[-a_{1,i}b_{1,i-2}]+d[-a_{1,i+1}b_{1,i-1}]>2e+S_{i-1}-R_{i+1}.
	\end{align*}
	By Lemma\;\ref{lem:B3-n+1}, Theorem \ref{thm:beligeneral}(iii) holds for $M$ and $ N'$ and so $ [b_{1},\ldots,b_{i-1}]\rep [a_{1},\ldots,a_{i}]$.
 
	(ii) Since $n$ is odd, we have $S_{n}\ge 0$ by Proposition \ref{prop:Ralphaproperty3}(i). 
	
	If $R_{n+2}=0$, then $R_{n+2}=0\le S_{n} $ and thus Theorem \ref{thm:beligeneral}(iii) holds trivially. 
	
	If $R_{n+2}=1$, assume that $R_{n+2}>S_{n}$ and $d[-a_{1,n+1}b_{1,n-1}]+d[-a_{1,n+2}b_{1,n}]>2e+S_{n}-R_{n+2}$. Then $S_{n}=0$ from the first assumption. Also, $R_{n}=0$ from the hypothesis. Hence Proposition \ref{prop:Ralphaproperty3}(ii) implies that both $\ord(b_{1,n})$ and $\ord(a_{1,n})$ are even. Since $R_{n+1}=0$ and $R_{n+2}=1$, $ \ord(a_{1,n+2}b_{1,n})$ is odd and so $ d[-a_{1,n+2}b_{1,n}]=0 $. Combining this with the second assumption, by Lemma \ref{lem:An-odd-2}, we see that $ 1=R_{n+2}-R_{n+1}>2e-1 $, i.e. $e<1$, which is impossible. 
\end{proof}
\begin{cor}\label{cor:B1}
	Suppose that $R_{i}=0$ for $1\le i\le n$.
	\begin{enumerate}
		\item[\rm (i)] If $n$ is even and either $R_{n+1}=0$ or $S_{n}\ge 0$, then Theorem \ref{thm:beligeneral}(i) holds for $1\le i\le n$.
		
		\item[\rm (ii)] If $n$ is odd, then Theorem \ref{thm:beligeneral}(i) holds for $1\le i\le n$. 
	\end{enumerate} 
\end{cor}
\begin{proof}
Suppose that $n$ is even and $R_{n+1}=0$. By Lemma \ref{lem:B1-B3-B4-j}(i), Theorem \ref{thm:beligeneral}(i) holds for $i\in [1,n]^{O}$. By Lemma \ref{lem:B1-B3-B4-j}(ii), Theorem \ref{thm:beligeneral}(i) holds for $i\in [1,n]^{E}$. Similarly for odd $n$.
	
Suppose that $n$ is even and $S_{n}\ge 0$. Then $n-1$ is odd. By (ii), Theorem \ref{thm:beligeneral}(i) holds for $1\le i\le n-1$. Since $R_{n}=0\le S_{n}$, Theorem \ref{thm:beligeneral}(i) also holds at $i=n$.
\end{proof}
\begin{cor}\label{cor:B2}
	Suppose  $R_{i}=0$ for $1\le i\le n$ and $\alpha_{i}=1$ for $1\le i\le n$.
	\begin{enumerate}
		\item[\rm (i)] Theorem \ref{thm:beligeneral}(ii) holds for $1\le i\le n-3$ or $1\le i\le n-2$, according as $n$ is even or odd.
		
		\item[\rm (ii)] If $n$ is even and $R_{n+1}=0$, then Theorem \ref{thm:beligeneral}(ii) holds for $1\le i\le n-1$; 
		
		\item[	] if moreover, $\alpha_{n+1}=1$, and either $e=1$, or $e>1$ and  $d[(-1)^{(n+2)/2}a_{1,n+2}]=1-R_{n+2}$, then Theorem \ref{thm:beligeneral}(ii) holds for $1\le i\le n$.
		
		\item[\rm (iii)] If $n$ is odd, $d[(-1)^{(n+1)/2}a_{1,n+1}]=1-R_{n+1}$, then Theorem \ref{thm:beligeneral}(ii) holds for $ 1\le i\le n$.
	\end{enumerate}  
\end{cor}
\begin{proof}
	(i)  Combine Lemma \ref{lem:A1} and Lemma \ref{lem:B2-j} with $j=n-2$ or $n-1$, according as $n$ is even or odd.
	
	(ii) Combine Lemma \ref{lem:A1}, Lemma \ref{lem:B2-j} with $j=n$ and Lemma \ref{lem:An-even} with $j=n$. 
	
	(iii) Combine (i), Lemma \ref{lem:An-even} with $j=n-1$ and Lemma \ref{lem:An-odd}.
\end{proof}
\begin{cor}\label{cor:B3}
	Suppose  $R_{i}=0$ for $1\le i\le n$.
	\begin{enumerate}
		\item[\rm (i)] Theorem \ref{thm:beligeneral}(iii) holds for $2\le i\le n-2$ or $2\le i\le n-1$, according as $n$ is even or odd. 
		
		\item[\rm (ii)]  If $n$ is even and $R_{n+1}=0$, then Theorem \ref{thm:beligeneral}(iii) holds for $2\le i\le n$; 
		
		\item[] if moreover, $\alpha_{n+1}=1$ and $d[(-1)^{(n+2)/2}a_{1,n+2}]=1-R_{n+2}$, then Theorem \ref{thm:beligeneral}(iii) holds for $2\le i\le n+1$.
		
		\item[\rm (iii)]   If $n$ is odd, $\alpha_{n}=1$ and $d[(-1)^{(n+1)/2}a_{1,n+1}]=1-R_{n+1}$, then Theorem \ref{thm:beligeneral}(iii) holds for $2\le i\le n$;
		
		\item[] if moreover, $R_{n+1}=0$ and $R_{n+2}\in \{0,1\}$, then Theorem \ref{thm:beligeneral}(iii) holds for $2\le i\le n+1$.
	\end{enumerate}   
\end{cor}
\begin{proof}
	(i) For even $n$, apply Lemma \ref{lem:B1-B3-B4-j}(iv) with $2\le j\le n-2$; for odd $n$, combine Lemma \ref{lem:B1-B3-B4-j}(iv) with $2\le j\le n-2$ and Lemma \ref{lem:B1-B3-B4-j}(iii) with $j=n-1$.
	
	(ii) Combine Lemma \ref{lem:B1-B3-B4-j}(iv) with $2\le j\le n-1$, Lemma \ref{lem:B1-B3-B4-j}(iii) with $j=n$ and Lemma \ref{lem:B3-n+1}.
	 
	(iii) Combine (i) and Lemma \ref{lem:B3-n-n+1-odd}(i) and (ii).
\end{proof}
\begin{cor}\label{cor:B4}
	Suppose  $R_{i}=0$ for $1\le i\le n$ and $R_{n+1}\in \{0,1\}$.
	\begin{enumerate}
		\item[\rm (i)] If $m=n+1$,  then Theorem \ref{thm:beligeneral}(iv) holds for $2\le i\le n-1$. 
		
		\item[\rm (ii)] If $m=n+2$ and $R_{n+2}-R_{n+1}\le 2e$, then Theorem \ref{thm:beligeneral}(iv) holds for $2\le i\le n$.
		
		\item[\rm (iii)] If $m\ge n+3$ and $R_{n+3}-R_{n+2}\le 2e$, then Theorem \ref{thm:beligeneral}(iv) holds for $2\le i\le n+1$.
	\end{enumerate} 
\end{cor}
\begin{proof}
	It is clear from Lemma \ref{lem:B1-B3-B4-j}(v).
\end{proof}
\begin{lem}\label{lem:m=n+1representation}
	Let $ N $ be an $ \mathcal{O}_{F} $-lattice of rank $ n\ge 2 $ and $ m=n+1 $. Suppose that $n$ is odd and $M$ satisfies the conditions: $R_{n-1}=R_{n}=0$ and
	\begin{align*}
		\text{either}\quad
		\begin{cases}
			R_{n+1}=0,\,\alpha_{n}=1   \\
			d((-1)^{(n+1)/2}a_{1,n+1})=1
		\end{cases} 
	\text{or}\quad R_{n+1}=1.
	\end{align*}	 
	If $ FM $ represents $ FN $, then $ M $ represents $ N $.
\end{lem}
\begin{proof}
	 By Proposition \ref{prop:Ralphaproperty3}(v), we have $R_{i}=0$ for $1\le i\le n$. If $R_{n+1}=1$, then $\alpha_{n}=1$ by Proposition \ref{prop:alphaproperty}(iii). Since $\ord(a_{1,n+1})$ is odd, $d((-1)^{(n+1)/2}a_{1,n+1})=0=1-R_{n+1}$. Hence, in both cases, we have $\alpha_{n}=1$ and $R_{n+1}+d[(-1)^{(n+1)/2}a_{1,n+1}]=R_{n+1}+d((-1)^{(n+1)/2}a_{1,n+1})=1 $. Also, Proposition \ref{prop:Ralphaproperty3}(vi) implies that $\alpha_{i}=1$ for $1\le i\le n-1$. Therefore, we are done by combining Corollaries \ref{cor:B1}(ii), \ref{cor:B2}(iii), \ref{cor:B3}(iii), \ref{cor:B4}(i) and Theorem \ref{thm:beligeneral}.
\end{proof}
\begin{lem}\label{lem:m=n+2representation}
	Let $ N $ be an $ \mathcal{O}_{F} $-lattice of rank $ n\ge 2 $ and $ m=n+2 $. 
	\begin{enumerate}
		\item[\rm (i)]$n$ is even and $ M $ satisfies the conditions: $R_{n}=R_{n+1}=0$ and
		\begin{align*}
			\text{either}\quad
			\begin{cases}
				R_{n+2}=0,\,\alpha_{n+1}=1   \\
				d((-1)^{(n+2)/2}a_{1,n+2})=1
			\end{cases} 
			\text{or}\quad R_{n+2}=1.
		\end{align*} 
		If $ FM $ represents $ FN $, then $ M $ represents $ N $.
		
		\item[\rm (ii)] $n$ is odd and $M$ satisfies the conditions: $R_{n}=R_{n+1}=0$ and
		\begin{align*}
			\text{either}\quad
			\begin{cases}
				R_{n+2}=0\\
				\alpha_{n}=1 			 
			\end{cases} 
			\text{or}\quad R_{n+2}=1.
		\end{align*} 
		If $ FM $ represents $ FN $, then $ M $ represents $ N $.
	\end{enumerate} 
\end{lem}
\begin{proof}
	By Proposition \ref{prop:Ralphaproperty3}(v), we have $R_{i}=0$ for $1\le i\le n+1$ for even or odd $n$. 
	
	(i) If $R_{n+2}=1$, then $\alpha_{n+1}=1$ by Proposition \ref{prop:alphaproperty}(iii). Since $\ord(a_{1,n+2})$ is odd, we have $d((-1)^{(n+2)/2}a_{1,n+2})=0=1-R_{n+2}$. Hence, in both cases, we see that $ \alpha_{n+1}=1$ and $R_{n+2}+d[(-1)^{(n+2)/2}a_{1,n+2}]=R_{n+2}+d((-1)^{(n+2)/2}a_{1,n+2})=1 $. Also, Proposition \ref{prop:Ralphaproperty3}(vi) implies that $\alpha_{i}=1$ for $1\le i\le n$. So we are done by combining Corollaries \ref{cor:B1}(i), \ref{cor:B2}(ii), \ref{cor:B3}(ii), \ref{cor:B4}(ii) and Theorem \ref{thm:beligeneral}.
	
	(ii) Since $\ord(a_{1,n+1})$ is even, we have $d((-1)^{(n+1)/2}a_{1,n+1})\ge 1$.
	 
	If $R_{n+2}=0$ and $\alpha_{n}=1$, then $\alpha_{n+1}=1$ by Proposition \ref{prop:Rproperty}(vi). If $R_{n+2}=1$, then $\alpha_{n+1}=1$ by Proposition \ref{prop:alphaproperty}(iii). So, in both cases, Proposition \ref{prop:Ralphaproperty3}(vi) implies that $\alpha_{i}=1$ for $1\le i\le n$.
	Hence
	 	\begin{align*}
	 		d[(-1)^{(n+1)/2}a_{1,n+1}]=\min\{d((-1)^{(n+1)/2}a_{1,n+1}),\alpha_{n+1}\}=\alpha_{n+1}=1=1-R_{n+1}.
	 	\end{align*}
Combining Corollaries \ref{cor:B1}(ii) \ref{cor:B2}(iii) \ref{cor:B3}(iii), \ref{cor:B4}(ii) and Theorem \ref{thm:beligeneral}, we are done.
\end{proof} 
 
\section{Classic $ n $-universality conditions for even $ n $} \label{sec:classicaleven-con}
Throughout this section, we assume that $ n $ is an even integer and $ m\ge n+2\ge 4 $. 
\begin{thm}\label{thm:classicaleven-nuniversaldyadic}
	 $ M $ is n-universal if and only if $ FM $ is n-universal and $ M $ satisfies the following conditions:
	 
	 $ J_{1}^{E}(n) $: $ R_{i}=0 $ for $ i=1,\ldots,n+1 $.
	 
	 $ J_{2}^{E}(n) $: $ \alpha_{n+1}=1 $ and $ R_{n+2}+d[(-1)^{(n+2)/2}a_{1,n+2}]=1 $. Also, if $ n=2 $, then $ m\ge  5 $.
	 
	 $ J_{3}^{E}(n) $: $ R_{n+3}-R_{n+2}\le 2e $.
\end{thm}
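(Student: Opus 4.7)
The plan is to split the argument into necessity and sufficiency, mirroring the strategy of the corresponding integral result in HeHu2 but accommodating the classic integrality constraint \eqref{eq:classicallyintegral} and the weaker lower bound \eqref{eq:classicallyintegralR2} on $R_2$.

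For necessity, assume $M$ is $n$-universal. Since every non-degenerate $n$-dimensional quadratic space over $F$ carries a classic integral lattice, $FM$ must itself be $n$-universal. To force $J_{1}^{E}(n)$ I would test $M$ against $\mathbf{H}_0^{n/2}$, read off $R_i\le 0$ for $1\le i\le n+1$ from Beli's $B_1(i)$, and conclude $R_i=0$ via the monotonicity of Proposition \ref{prop:Rproperty}(v). To force $J_{2}^{E}(n)$ and $J_{3}^{E}(n)$ I would feed $M$ the members of $\mathcal{C}^{E,n}$ together with classic integral lattices whose $S_{n-1}+S_n$ is as small as \eqref{eq:classicallyintegralR2} allows; Lemma \ref{lem:classicalevenlatticeproperty2} then supplies the crucial obstruction, showing that whenever $\alpha_{n+1}\ne 1$ or the defect-sum equality fails, the condition $B_3(n+1)$ is violated by one of $C_{1}^{E,n},C_{2}^{E,n},C_{3}^{E,n}(\omega,0),C_{4}^{E,n}(\omega,0)$ with $\omega=1+\pi$. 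The supplementary requirement $m\ge 5$ when $n=2$ follows from the impossibility of a rank-$4$ ambient representing a classic integral rank-$2$ lattice with $S_1+S_2<0$.

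For sufficiency, suppose $FM$ is $n$-universal and $J_{1}^{E}(n),J_{2}^{E}(n),J_{3}^{E}(n)$ hold. Fix an arbitrary classic integral $N\cong\prec b_1,\ldots,b_n\succ$ and verify Beli's four conditions in Theorem \ref{thm:beligeneral}. Condition $B_1(i)$ is immediate: $R_i=0\le S_i$ for $i\le n+1$ by $J_{1}^{E}(n)$ and the classic integrality of $N$, while the higher indices are controlled by $J_{3}^{E}(n)$ together with \eqref{eq:GoodBONGs}. Condition $B_2(i)$ reduces to bounding $d[a_{1,i}b_{1,i}]$ from below by $A_i(M,N)$; using the definition of $A_i(M,N)$, one case-splits on $i\le n$, $i=n+1$, and $i=n+2$, with $\alpha_{n+1}=1$ and the defect-sum identity in $J_{2}^{E}(n)$ governing the critical regimes. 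Condition $B_4(i)$ is triggered only when $R_{i+2}>R_{i+1}+2e$, a narrow window in which $J_{3}^{E}(n)$ combined with a direct representation argument at the space level (using $FN\rep FM$) applies.

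The main obstacle is $B_3(n+1)$: one must show $\prod_{k=1}^{n}P_k(M,N)=1$ whenever the hypothesis \eqref{asmp:B3if} is satisfied. For the partial products with $i\le n$, the value $1$ is already forced by $FN\rep FM$ through the Hilbert-symbol criterion recalled before Theorem \ref{thm:beligeneral}; the new factor entering at index $n+1$ requires separate treatment. I would exploit the defect-sum equality $R_{n+2}+d[(-1)^{(n+2)/2}a_{1,n+2}]=1$ to pin the residue class of $a_{1,n+2}$ modulo squares, making the extra Hilbert symbol $(-a_{n+1}a_{n+2},\,\cdot\,)_{\mathfrak{p}}$ explicitly computable. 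Running the computation of Lemma \ref{lem:classicalevenlatticeproperty2} in reverse then forces this symbol to take exactly the value needed for the product to be trivial. An induction that peels off hyperbolic blocks from $N$ reduces the remaining verification to a bounded list of base cases handled by direct computation.
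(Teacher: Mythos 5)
Your high-level architecture --- necessity by feeding $M$ a short list of critical test lattices and reading off obstructions from Beli's conditions, sufficiency by verifying $B_1$--$B_4$ of Theorem \ref{thm:beligeneral} for an arbitrary classic integral $N$ --- is exactly the paper's (Lemmas \ref{lem:classicalcon12even}, \ref{lem:classicalcon3even}, \ref{lem:classicalcon4even}). However, several of your individual steps would fail. First, testing against $\mathbf{H}_0^{n/2}$ cannot give $R_{n+1}=0$: there $S_n=0$, so $B_1(n)$ is already satisfied by the disjunct $R_n\le S_n$ and says nothing about $R_{n+1}$. One needs a test lattice with $S_n<0$; the paper uses $C_1^{E,n}$, where $S_n=-e<0=R_n$ kills the first disjunct and forces $R_n+R_{n+1}\le S_{n-1}+S_n=0$. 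Second, $J_3^{E}(n)$ is not an output of the $B_3(n+1)$ obstruction of Lemma \ref{lem:classicalevenlatticeproperty2}: in the paper it comes from $B_4(n+1)$ applied to the $M$-dependent lattices $C_3^{E,n}(\delta_{n+2},R_{n+2})$ and $C_4^{E,n}(\delta_{n+2},R_{n+2})$, where $R_{n+3}-R_{n+2}>2e$ triggers \eqref{asmp:B4if} and Witt cancellation forces a single binary space to carry two different Hasse symbols. This mechanism is absent from your sketch. Third, your justification of $m\ge 5$ for $n=2$ invokes a classic integral binary lattice with $S_1+S_2<0$, which cannot exist by \eqref{eq:classicallyintegral}; the actual argument is that $m=4$ together with $2$-universality of $FM$ forces $FM\cong\mathbb{H}^2$, so $d[a_{1,4}]=d(a_{1,4})=\infty$, contradicting $R_4+d[a_{1,4}]=1$.

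The sufficiency direction has a more serious flaw in your treatment of $B_3$. The product $\prod_{k=1}^{i-1}P_k(M,N)=1$ is equivalent to $[b_1,\ldots,b_{i-1}]\rep [a_1,\ldots,a_i]$, a statement about a \emph{truncation} of $FM$, and is not ``forced by $FN\rep FM$'' (which only gives representation by the full space). The paper never computes these products in the sufficiency direction; it shows instead that under $J_1^{E}(n)$, $J_2^{E}(n)$, $J_3^{E}(n)$ the hypothesis \eqref{asmp:B3if} never holds --- for $2\le i\le n$ via the bound $d[-a_{1,i}b_{1,i-2}]+d[-a_{1,i+1}b_{1,i-1}]\le 2e+S_{i-1}-R_{i+1}$, and for $i=n+1$ via a case split that either reproves this bound or yields $R_{n+2}\le S_n$ --- so that $B_3$ is vacuous and no Hilbert-symbol evaluation is required. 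Your proposed alternative (``pin the residue class of $a_{1,n+2}$,'' ``run Lemma \ref{lem:classicalevenlatticeproperty2} in reverse,'' ``peel off hyperbolic blocks'') is not a proof as stated: Lemma \ref{lem:classicalevenlatticeproperty2} is formulated only for $e=1$ and produces a \emph{failure} of \eqref{asmp:B3then} for one of a pair of lattices, so it cannot be reversed into a verification that the product is trivial for every $N$.
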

\begin{lem}\label{lem:classicalcon12even}
	 Suppose that $ FM $ is n-universal. Then the following conditions are equivalent.
	 \begin{enumerate}
	 	 \item[\rm (i)] Theorem \ref{thm:beligeneral}(i)(ii) hold for all n-ary $ \mathcal{O}_{F} $-lattices $ N $.
	 	 
	 	 \item[\rm (ii)] Theorem \ref{thm:beligeneral}(i)(ii) holds for $ N=H_{e}^{n}(1),C_{1}^{n}(\omega) $ (cf. Definition \ref{defn:classicallattices}). 
	 	 
	 	 \item[\rm (iii)] $ M $ satisfies the following conditions:
	 	 
	 	 $ J_{1}^{\prime E}(n) $: $ R_{i}=0 $ for $ i=1,\ldots,n+1 $ and $\alpha_{i}=1 $ for $ i=1,\ldots,n $. 
	 	 
	 	 $ J_{2}^{\prime E}(n) $: If $ e>1 $, then $ d[(-1)^{(n+2)/2}a_{1,n+2}]\le 1-R_{n+2} $.
	 \end{enumerate} 
\end{lem}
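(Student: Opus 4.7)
The approach is cyclic, $(\mathrm{i})\Rightarrow (\mathrm{ii})\Rightarrow (\mathrm{iii})\Rightarrow (\mathrm{i})$. The implication $(\mathrm{i})\Rightarrow (\mathrm{ii})$ is immediate because, by Remark \ref{re:classicalevenlattices}(ii), both $C_1^{E,n}$ and $C_3^{E,n}(\omega,0)$ are classic integral $n$-ary lattices. Thus the substance lies in the other two directions.

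For $(\mathrm{ii})\Rightarrow (\mathrm{iii})$, I would first apply the conditions $B_1(i)$ and $B_2(i)$ of Theorem \ref{thm:beligeneral} with $N = C_1^{E,n} = \mathbf{H}_e^{n/2}$, whose BONG gives $S_{2k-1} = e$ and $S_{2k} = -e$. The $B_1(i)$ chain for $1\le i\le n$, combined with classic integrality of $M$ (so $R_1\ge 0$ and $R_1+R_2\ge 0$), the good-BONG constraints of Lemma \ref{lem:goodBONGequivcon}, and the propagation in Proposition \ref{prop:Ralphaproperty3}, forces $R_i=0$ for $1\le i\le n+1$. With these values in hand, $B_2(i)$ (after noting that $d(\pm 1)=\infty$ annihilates several of the defect terms inside $A_i(M,N)$) collapses to $d[-a_ia_{i+1}]\le 1$ for $1\le i\le n$, which via \eqref{eq:alpha-defn} yields $\alpha_i=1$ and establishes $J_1^{\prime E}(n)$. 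For the second condition, I would take $N=C_3^{E,n}(\omega,0)=\mathbf{H}_0^{(n-2)/2}\perp\prec 1,-\omega\succ$ with $d(\omega)=1$, and read off $B_2(n)$ (equivalently, the third term of $A_n(M,N)$ once $R_i=0$ for $i\le n+1$ is known) to extract $R_{n+2}+d[(-1)^{(n+2)/2}a_{1,n+2}]\le 1$, namely $J_1^{\prime\prime E}(n)$, in the regime $e>1$; for $e=1$ this inequality becomes vacuous because $d[\cdot]\le e=1$ holds trivially.

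For $(\mathrm{iii})\Rightarrow (\mathrm{i})$, I would verify $B_1$ and $B_2$ for an arbitrary classic integral $\mathcal{O}_F$-lattice $N$ of rank $n$, using the standing hypothesis that $FM$ is $n$-universal so $FN\rep FM$. Classic integrality of $N$ gives $S_1\ge 0$ and $S_1+S_2\ge 0$; the good-BONG constraints $S_i\le S_{i+2}$ and $S_{i+1}-S_i\ge -2e$ from Lemma \ref{lem:goodBONGequivcon} then propagate this to $S_{i-1}+S_i\ge 0$ for $2\le i\le n$. Since $R_j=0$ for $j\le n+1$ and $m\ge n+2$ ensures $1<i<m$ where needed, $B_1(i)$ is automatic. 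For $B_2(i)$, I would upper-bound $A_i(M,N)$ using $\alpha_i=1$, invoking $J_1^{\prime\prime E}(n)$ to control the third term of $A_i$ when $R_{n+2}$ enters (this is the only place where the $e>1$ hypothesis matters), and lower-bound $d[a_{1,i}b_{1,i}]\ge 1$ by the observation that every unit has quadratic defect at least $1$ (or by absorbing the term into $\beta_i\ge 0$); the inequality $d[a_{1,i}b_{1,i}]\ge A_i(M,N)$ then follows.

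The delicate step is isolating $J_1^{\prime\prime E}(n)$ in $(\mathrm{ii})\Rightarrow (\mathrm{iii})$: the condition $B_2(n)$ for $C_3^{E,n}(\omega,0)$ intertwines the defect of $\omega$ with $(-1)^{(n+2)/2}a_{1,n+2}$, and one must carefully invoke the domination principle together with non-squareness of $\omega$ to identify $d(-a_{1,n+2}b_{1,n})$ with $\min\{d((-1)^{(n+2)/2}a_{1,n+2}),1\}$ before the desired inequality crystallizes. A secondary technical point is the uniform handling of $e=1$ versus $e>1$: when $e=1$, Proposition \ref{prop:alphaproperty}(iii) makes $\alpha_i=1$ automatic as soon as $R_i=R_{i+1}=0$, and $J_1^{\prime\prime E}(n)$ is vacuous, so care is needed to ensure that the two regimes collapse into $J_1^{\prime E}(n)$ cleanly without spurious additional hypotheses.
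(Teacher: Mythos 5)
There is a genuine gap in your $(\mathrm{ii})\Rightarrow(\mathrm{iii})$ step: you have assigned the two test lattices to the wrong conclusions, and with that assignment neither derivation goes through. For $N=C_{1}^{E,n}=\mathbf{H}_{e}^{n/2}$ one has $S_{1}=e$, so $B_{1}(1)$ only yields $R_{1}\le e$, not $R_{1}=0$; indeed $M=\mathbf{H}_{e}^{m/2}$ represents $C_{1}^{E,n}$ (so satisfies all of Theorem \ref{thm:beligeneral}(1)(2) for it) yet has $R_{1}=e\neq 0$. Likewise, since $S_{2}-S_{1}=-2e$ forces $\beta_{n-1}=0$ for $C_{1}^{E,n}$, the condition $B_{2}(1)$ only gives $\alpha_{1}\le e$, not $\alpha_{1}\le 1$. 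The paper obtains $R_{i}=0$ ($1\le i\le n$) and $\alpha_{i}=1$ from $C_{3}^{E,n}(\omega,0)$, whose $S_{i}=0$ and $\beta_{n-1}=d(\omega)=1$ are exactly what make $B_{1}$ and $B_{2}(1)$ bite; $C_{1}^{E,n}$ is then used only to squeeze out $R_{n+1}=0$ from $B_{1}(n)$ (using $S_{n}=-e<0=R_{n}$). Conversely, your plan to read $J_{1}^{\prime\prime E}(n)$ off $B_{2}(n)$ for $C_{3}^{E,n}(\omega,0)$ cannot work: for that lattice $\beta_{n-1}=1$, so $A_{n}\le R_{n+1}-S_{n}+d[-a_{1,n+1}b_{1,n-1}]\le\beta_{n-1}=1=d[a_{1,n}b_{1,n}]$ holds automatically once $J_{1}^{\prime E}(n)$ is known, and no constraint on $R_{n+2}+d[(-1)^{(n+2)/2}a_{1,n+2}]$ is produced. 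The paper instead applies $B_{2}(n)$ to $C_{1}^{E,n}$, where $\beta_{n-1}=0$ kills the competing term $d[-a_{1,n+1}b_{1,n-1}]$ and, when $e>1$, leaves $R_{n+2}+d[(-1)^{(n+2)/2}a_{1,n+2}]$ as the only term of $A_{n}$ that can be $\le 1$.

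Two smaller points. In $(\mathrm{iii})\Rightarrow(\mathrm{i})$ your claim that $d[a_{1,i}b_{1,i}]\ge 1$ ``because every unit has defect at least $1$'' fails when $\sum_{k\le i}S_{k}$ is odd (then $d(a_{1,i}b_{1,i})=0$) and also when $\beta_{i}=0$; the paper's proof must, and does, treat the case $d[a_{1,n}b_{1,n}]=0$ separately (its Case I) and splits the even case further according to $e=1$ versus $e>1$, so this direction needs substantially more care than your sketch suggests. Finally, the parenthetical ``$d(\pm 1)=\infty$'' is not correct: $d(-1)$ need not be infinite over a general dyadic field.
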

	
\begin{proof}
	\textbf{(i)$ \Rightarrow $(ii):} It is trivial.
	
	\textbf{(ii)$ \Rightarrow $(iii):}  We are going to show that $M$ satisfies $ J_{1}^{\prime E}(n) $ and $ J_{2}^{\prime E}(n) $ by proving the assertions (a)-(c).
	
	\textbf{(a) $ R_{i}=0 $ for $ i=1,\ldots,n+1 $.}
	
	Take $N=C_{1}^{n}(\omega)$. Then $S_{i}=0$ for $1\le i\le n$ by Lemma \ref{lem:S-invariant}(i). Applying Theorem \ref{thm:beligeneral}(i), we have $R_{1}\le S_{1}=0 $. Hence $ R_{1}=0 $ by \eqref{eq:integralcondition} and thus $R_{2}\ge 0$ by \eqref{eq:classicallyintegral}.  
   
   Take $N=H_{e}^{n}(1)$. Then $S_{n-1}=e$ and $ S_{n}=-e$ by Lemma \ref{lem:S-invariant}(i). Applying Theorem \ref{thm:beligeneral}(i), since $ 1<n<m $ and $ R_{n}\ge R_{2}\ge 0>-e=S_{n}  $ (by Proposition \ref{prop:Ralphaproperty3}(i)), we have 
    $R_{n}+R_{n+1}\le S_{n-1} +S_{n} =0$. Hence Proposition \ref{prop:Ralphaproperty3}(iv) implies $ R_{i}=0 $ for $1\le i\le n+1$.
	
	\textbf{(b) $ \alpha_{i}=1 $ for $ i=1,\ldots,n $.}
	
	Take $N=C_{1}^{n}(\omega)$. Then $\beta_{1}=1$ by Lemma \ref{lem:S-invariant}(iii). Applying Theorem \ref{thm:beligeneral}(ii) with $i=1$, we have
	\begin{align*}
	\min\{e,d[-a_{1,2}]\}=A_{1}(M,C_{1}^{n}(\omega))\le d[a_{1}b_{1}]\le \beta_{1}=1.
	\end{align*}
	By (a), we have $R_{1}=R_{2}=0$. Hence from \eqref{eq:alpha-defn} we deduce $\alpha_{1}=\min\{e,d[-a_{1,2}]\}\le 1$. So (b) follows by Proposition \ref{prop:Ralphaproperty3}(vi).
	
	\textbf{(c) If $ e>1 $, then  $ d[(-1)^{(n+2)/2}a_{1,n+2}]\le 1-R_{n+2} $.}
	
	Take $N=H_{e}^{n}(1)$. Since $S_{n}-S_{n-1}=-2e$, we have $d[-a_{1,n+1}b_{1,n-1}]=\beta_{n-1}=0$ by Proposition  \ref{prop:alphaproperty}(ii). Since $S_{n-1}-S_{n-2}=2e$, $ \beta_{n-2}=2e $ by Proposition \ref{prop:Rproperty}(i). Hence
	 \begin{align*}
	 	&d[a_{1,n+2}b_{1,n-2}]\\
	 	=\;&
	 	\begin{cases}
	 		\min\{d((-1)^{(n+2)/2}a_{1,n+2}),\alpha_{n+2},2e\}=\min\{d[(-1)^{(n+2)/2}a_{1,n+2}],2e\} &\text{if $ n>2 $},  \\
	 		\min\{d((-1)^{(n+2)/2}a_{1,n+2}),\alpha_{n+2}\}=	\min\{d[(-1)^{(n+2)/2}a_{1,n+2}]\} &\text{if $ n=2 $}. 
	 	\end{cases} 
	 \end{align*}
	Apply Theorem \ref{thm:beligeneral}(ii) with $i=n$, we have	 
	\begin{align*}
	\min\{e, R_{n+2}+d[(-1)^{(n+2)/2}a_{1,n+2}],R_{n+2}+2e\}&=\min\{\dfrac{e}{2}+e, e, R_{n+2}+d[a_{1,n+2}b_{1,n-2}]\} \\
	&=A_{n}(M,H_{e}^{n}(1))\le  d[a_{1,n}b_{1,n} ]\le \alpha_{n}\underset{(b)}{=}1,
	\end{align*}
    where the term $ R_{n+2}+2e $ is ignored if $ n=2 $. Since $ R_{n+2}\ge R_{2}=0 $ by Proposition \ref{prop:Ralphaproperty3}(i), $R_{n+2}+2e>1$. If $e>1$, then $ R_{n+2}+d[(-1)^{(n+2)/2}a_{1,n+2}]=A_{n}(M,H_{e}^{n}(1))\le 1 $, as required.
	
	\textbf{(iii)$ \Rightarrow $(i):} This follows by Corollaries \ref{cor:B1}(i) and \ref{cor:B2}(ii).
\end{proof}
\begin{lem}\label{lem:classicalevenlatticeproperty2}
	 Suppose that $ M $ satisfies $ J_{1}^{\prime E}(n) $. Suppose $ e=1 $ and $ R_{n+2}+d[(-1)^{(n+2)/2}a_{1,n+2}]>1 $.
	\begin{enumerate}
		\item[\rm (i)] If either $ d((-1)^{(n+2)/2}a_{1,n+2})<2e $, or $ d[(-1)^{(n+2)/2}a_{1,n+2}]\ge 2e$,  then Theorem \ref{thm:beligeneral}(iii) fails at $i=n+1$ for either $N=H_{e}^{n}(1)$ or $N=H_{e}^{n}(\Delta)$.
		
		\item[\rm (ii)] Assume that $m\ge n+3$ and  $d[(-1)^{(n+2)/2}a_{1,n+2}]=\alpha_{n+2}$. If $ R_{n+2}\ge 1 $, then Theorem \ref{thm:beligeneral}(iii) fails at $i=n+1$ for either $  N=C_{1}^{n}(\omega) $ or $  N=C_{2}^{n}(\omega) $.
	\end{enumerate}
\end{lem}
\begin{proof}
	(i) Take $N=H_{e}^{n}(\mu)$ with $\mu\in \{1,\Delta\}$. Since $e=1$, we have $S_{n}=-e=-1$ by Lemma \ref{lem:S-invariant}(i). Hence $ R_{n+2}\ge R_{n}=0>S_{n}=-1 $ by Proposition \ref{prop:Ralphaproperty3}(i). Since $S_{n}=-e$, Proposition \ref{prop:Ralphaproperty3}(iii) implies
	\begin{align*}
		d[(-1)^{n/2}b_{1,n}]\ge 2e.
	\end{align*}
	 If $ d((-1)^{(n+2)/2}a_{1,n+2})<2e  $, then $d[(-1)^{(n+2)/2}a_{1,n+2}]\le d((-1)^{(n+2)/2}a_{1,n+2})<2e  $ and so
	$ d[-a_{1,n+2}b_{1,n}] =d[(-1)^{(n+2)/2}a_{1,n+2}]>1-R_{n+2}  $ by the domination principle. Hence
	\begin{align*}
		d[-a_{1,n+1}b_{1,n-1}]+d[-a_{1,n+2}b_{1,n}]>0+(1-R_{n+2})=2e-1-R_{n+2}=2e+S_{n}-R_{n+2}.
	\end{align*}
	If $d[(-1)^{(n+2)/2}a_{1,n+2}]\ge 2e $, then $d[-a_{1,n+2}b_{1,n}]\ge 2e$ by the domination principle. Hence
	\begin{align*}
		d[-a_{1,n+1}b_{1,n-1}]+d[-a_{1,n+2}b_{1,n}]\ge 0+2e>  2e-1-R_{n+2}=2e+S_{n}-R_{n+2}.
	\end{align*}
	For the second part, by definition, $[b_{1},\ldots,b_{n}]=FN=FH_{1}^{n}(1)\cong \mathbb{H}^{n/2}$ or $ =FH_{1}^{n}(\Delta)\cong \mathbb{H}^{(n-2)/2}\perp [\pi,-\Delta\pi] $ (as $e=1$). Hence, by Lemma \ref{lem:spacerep-criterion}(i), $[a_{1},\ldots,a_{n+1}]$ does not represent  $ FH_{1}^{n}(1) $ or $ FH_{1}^{n}(\Delta)$.
	
	(ii) Take $N=C_{\nu}^{n}(\omega)$, where $\nu\in \{1,2\}$. Then $S_{i}=0$ for $1\le i\le n$ by Lemma \ref{lem:S-invariant}(i). Clearly, $ R_{n+2}\ge 1>S_{n}=0 $ from the hypothesis. Since $\ord(a_{1,n+1}b_{1,n-1})$ is even, $ d(-a_{1,n+1}b_{1,n-1})\ge 1 $. Also since $R_{n+2}-R_{n+1}=R_{n+2}\ge 1>-2e$, Proposition \ref{prop:alphaproperty}(ii) implies $ \alpha_{n+1}\ge 1 $. We have assumed $e=1$. Since $ S_{n}-S_{n-1}=0  $, Proposition \ref{prop:alphaproperty}(iii) implies $ \beta_{n-1}=1 $. Hence 
	\begin{align*}
		d[-a_{1,n+1}b_{1,n-1} ]=\min\{d(-a_{1,n+1}b_{1,n-1}),\alpha_{n+1},\beta_{n-1}\}=\beta_{n-1}=1.
	\end{align*}
 Next, we estimate the term $d[-a_{1,n+2}b_{1,n}]$. First, we have $d[(-1)^{(n+2)/2}a_{1,n+2}]=\alpha_{n+2}$ from the hypothesis. Also, we have $d[(-1)^{n/2}b_{1,n}]=d((-1)^{n/2}b_{1,n})=d(\omega)=1$ by definition of $C_{\nu}^{n}(\omega)$. By the domination principle, we see that
	\begin{align*}
			d[-a_{1,n+2}b_{1,n}]\ge \min\{d[(-1)^{(n+2)/2}a_{1,n+2}],d[(-1)^{n/2}b_{1,n}]\}=\min\{\alpha_{n+2},1\}.
	\end{align*}
 If $\alpha_{n+2}\ge 1$, then $d[-a_{1,n+2}b_{1,n}]+R_{n+2}\ge 1+R_{n+2}>1 $ 
from the hypothesis  $R_{n+2}>0$. If $ \alpha_{n+2}=0 $,  then $ R_{n+2}=R_{n+3}+2e\ge 2e  $ by Propositions \ref{prop:alphaproperty}(ii) and \ref{prop:Ralphaproperty3}(i). It follows that $ d[-a_{1,n+2}b_{1,n}]+R_{n+2}\ge 0+2e=2e>1 $. Hence, in both cases, we have
	\begin{align*}
		d[-a_{1,n+2}b_{1,n}]>1-R_{n+2}.
	\end{align*}
 So we conclude that
	\begin{align*}
		d[-a_{1,n+1}b_{1,n-1}]+d[-a_{1,n+2}b_{1,n}]>1+(1-R_{n+2})=2e+S_{n}-R_{n+2}.
	\end{align*}
	For the second part, by definition, $ [b_{1},\ldots,b_{n}]=FN=FC_{i}^{n}(\omega)\cong W_{i}^{n}(\omega) $ with $i\in \{1,2\}$ (cf. \cite[Definition 3.4]{HeHu2}), so $[a_{1},\ldots,a_{n+2}]$ cannot represent both of $FC_{1}^{n}(\omega)$ and $FC_{2}^{n}(\omega)$ by \cite[Lemma 3.13]{HeHu2}, as desired.
\end{proof}
\begin{lem}\label{lem:0-1}
	 Suppose that $M$ satisfies $R_{j-2}=0$  and $ R_{j}+d[(-1)^{j/2}a_{1,j}]\le 1$ for some even $j\ge 4$. Then $\{R_{j}, d[(-1)^{j/2}a_{1,j}]\}\subseteq \{0,1\}$.
\end{lem}
\begin{proof}
	By Proposition \ref{prop:Ralphaproperty3}(i), we have $R_{j}\ge R_{j-2}=0$. Since also $d[(-1)^{j/2}a_{1,j}]\ge 0$, we see that $ R_{j}\le 1$ and $ d[(-1)^{j/2}a_{1,j}] \le 1$ from the inequality we assumed, as desired.
\end{proof}
\begin{lem}\label{lem:classicalcon3even}
	 Suppose that $ FM $ is n-universal and $ M $ satisfies $ J_{1}^{\prime E}(n) $ and $ J_{2}^{\prime E}(n) $ in Lemma \ref{lem:classicalcon12even}. Then the following conditions are equivalent.
	 \begin{enumerate}
	 	\item[\rm (i)] Theorem \ref{thm:beligeneral}(iii) holds for n-ary $ \mathcal{O}_{F} $-lattices $ N $.
	 	
	 	\item[\rm (ii)] Theorem \ref{thm:beligeneral}(iii) holds for $ N=H_{e}^{n}(1),C_{1}^{n}(\omega) \;\text{and}\;C_{2}^{n}(\omega) $, and for $ N=H_{e}^{n}(\Delta) $ if $ e=1 $ (cf. Definition \ref{defn:classicallattices}).
	 	
	 	\item[\rm (iii)] $ M $ satisfies $ J_{2}^{E}(n) $ in Theorem \ref{thm:classicaleven-nuniversaldyadic}.
	 \end{enumerate}
\end{lem}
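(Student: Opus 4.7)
The plan is to prove the cycle (i) $\Rightarrow$ (ii) $\Rightarrow$ (iii) $\Rightarrow$ (i). The implication (i) $\Rightarrow$ (ii) is immediate since each of $C_{1}^{E,n}$, $C_{2}^{E,n}$, $C_{3}^{E,n}(\omega,0)$, $C_{4}^{E,n}(\omega,0)$ is a classic integral $n$-ary $\mathcal{O}_{F}$-lattice, so $B_{3}$ on all such $N$ specializes to $B_{3}$ on these.

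For (ii) $\Rightarrow$ (iii), the goal is to extract from $B_{3}$ on the test lattices the equality $R_{n+2}+d[(-1)^{(n+2)/2}a_{1,n+2}]=1$, the identity $\alpha_{n+1}=1$, and the rank bound $m\ge 5$ when $n=2$. The upper bound $R_{n+2}+d[(-1)^{(n+2)/2}a_{1,n+2}]\le 1$ comes from $J_{1}^{\prime\prime E}(n)$ of Lemma \ref{lem:classicalcon12even} when $e>1$, and from the contrapositive of Lemma \ref{lem:classicalevenlatticeproperty2} when $e=1$: part (i) of that lemma shows that a violation of this bound in the regime where either $d((-1)^{(n+2)/2}a_{1,n+2})<2e$, or $d((-1)^{(n+2)/2}a_{1,n+2})\ge 2e$ together with $\alpha_{n+2}\ge 2e$, forces $B_{3}(n+1)$ to fail for $C_{1}^{E,n}$ or $C_{2}^{E,n}$; part (ii) covers the complementary regime ($d((-1)^{(n+2)/2}a_{1,n+2})\ge 2e$ and $R_{n+2}\ge 1$) using $C_{3}^{E,n}(\omega,0)$ or $C_{4}^{E,n}(\omega,0)$. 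The matching lower bound and the identity $\alpha_{n+1}=1$ are obtained by combining the good-BONG inequality $R_{n+2}\ge R_{n}=0$ from \eqref{eq:GoodBONGs} with Proposition \ref{prop:alphaproperty}, and by computing the Hilbert-symbol product of $B_{3}(n+1)$ for a specially chosen test lattice to rule out strict inequality in the bound. The requirement $m\ge 5$ for $n=2$ reflects that $B_{3}(n+1)=B_{3}(3)$ is only imposed when $i=3\le m-1$, so that the failure mechanism driving the upper bound can be triggered.

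For (iii) $\Rightarrow$ (i), I verify $B_{3}(i)$ for every $1<i\le \min\{m-1,n+1\}$ and every classic integral $n$-ary $N\cong \prec b_{1},\ldots,b_{n}\succ$. When $1<i\le n$, I combine $J_{1}^{\prime E}(n)$ (which gives $R_{i+1}=0$ and $\alpha_{i}=1$) with the classic integrality bounds \eqref{eq:classicallyintegral}--\eqref{eq:classicallyintegralR2} on $N$ to violate the hypothesis \eqref{asmp:B3if}, so $B_{3}(i)$ holds vacuously. When $i=n+1$, either \eqref{asmp:B3if} fails (and we are done) or $J_{2}^{E}(n)$ pins down $R_{n+2}$ and $d[(-1)^{(n+2)/2}a_{1,n+2}]$ tightly enough that the Hilbert-symbol product $\prod_{k=1}^{n}P_{k}(M,N)$ can be evaluated via \cite[Lemma 2.8(i)]{HeHu2} and shown to equal $1$, using Proposition \ref{prop:Rproperty}(v)--(vi) and Proposition \ref{prop:Ralphaproperty3} to control the parities of the $S_{i}$. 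The main obstacle is precisely this case $i=n+1$: one must argue uniformly over all classic integral $N$, partitioning by the discriminant class of $N$ and the parities and magnitudes of the $S_{i}$, that either the hypothesis \eqref{asmp:B3if} fails or the tight constraints of $J_{2}^{E}(n)$ force $\prod_{k=1}^{n}P_{k}(M,N)=1$.
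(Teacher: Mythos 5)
Your overall architecture matches the paper's: the implication (ii)$\Rightarrow$(iii) is indeed driven by $J_{1}^{\prime\prime E}(n)$ when $e>1$ and by the contrapositive of Lemma \ref{lem:classicalevenlatticeproperty2} (parts (i) and (ii) applied to $C_{1}^{E,n},C_{2}^{E,n}$ and to $C_{3}^{E,n}(\omega,0),C_{4}^{E,n}(\omega,0)$ respectively) when $e=1$, exactly as you describe. However, there is a genuine gap at the central step of (iii)$\Rightarrow$(i), namely $B_{3}(n+1)$ for arbitrary classic integral $N$. You leave this as an unresolved disjunction --- ``either \eqref{asmp:B3if} fails or the Hilbert-symbol product $\prod_{k=1}^{n}P_{k}(M,N)$ equals $1$'' --- and explicitly flag it as the main obstacle without supplying the argument. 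The paper's resolution is that the \emph{first} disjunct always holds: splitting on whether $d((-1)^{n/2}b_{1,n})=d((-1)^{(n+2)/2}a_{1,n+2})<\alpha_{n+2}$, in the complementary case one gets $d[-a_{1,n+2}b_{1,n}]\le 1-R_{n+2}$ from $J_{2}^{E}(n)$ and the domination principle, whence the second inequality of \eqref{asmp:B3if} fails (with a separate parity argument when $e=1$ and $S_{n}=-e$); in the remaining case $J_{2}^{E}(n)$ forces $d((-1)^{n/2}b_{1,n})=1-R_{n+2}$, and a domination-principle argument on the $b_{i}$ produces an even index $k$ with $R_{n+2}-1<S_{k}-S_{k-1}\le S_{n}$, so the first inequality $R_{n+2}>S_{n}$ of \eqref{asmp:B3if} fails. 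No evaluation of $\prod_{k}P_{k}(M,N)$ is ever performed, and trying to prove $\prod_{k}P_{k}(M,N)=1$ uniformly over the $N$ satisfying \eqref{asmp:B3if} is not a workable plan, since that set is in fact empty and there is no apparent direct computation.

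Two smaller inaccuracies: the lower bound $R_{n+2}+d[(-1)^{(n+2)/2}a_{1,n+2}]\ge 1$ in (ii)$\Rightarrow$(iii) needs no Hilbert-symbol computation with a special test lattice --- it is a pure parity statement ($d[\,\cdot\,]=0$ when $R_{n+2}=1$, and $d(\cdot)\ge 1$, $\alpha_{n+2}\ge 1$ when $R_{n+2}=0$). And your explanation of why $m\ge 5$ is forced when $n=2$ does not work as stated: for $m=4$ the condition $B_{3}(3)$ is still imposed (since $\min\{m-1,n+1\}=3$); the actual obstruction is that $m=4$ together with $2$-universality of $FM$ forces $FM\cong\mathbb{H}^{2}$, so $d[a_{1,4}]=d(a_{1,4})=\infty$ (the term $\alpha_{4}$ being ignored), contradicting $R_{4}+d[a_{1,4}]=1$.
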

\begin{proof}
	\textbf{(i)$ \Rightarrow $(ii):} It is trivial.
	
	\textbf{(ii)$ \Rightarrow $(iii):} Firstly, we claim 
	\begin{align}\label{eq:da1n+2}
		d[(-1)^{(n+2)/2}a_{1,n+2}]\le 1-R_{n+2}.
	\end{align}
	 Since $ M $ satisfies $ J_{2}^{\prime E}(n) $, \eqref{eq:da1n+2} is clear for $ e>1 $. Assume $ e=1 $ and $ d[(-1)^{(n+2)/2}a_{1,n+2}]>1-R_{n+2} $. Then we show the claim by proving the assertions (a) and (b).
	
	\textbf{(a) $ d[(-1)^{(n+2)/2}a_{1,n+2}]=\alpha_{n+2}<2e$.}
	
	If $ d((-1)^{(n+2)/2}a_{1,n+2})<2e $ or $d[(-1)^{(n+2)/2}a_{1,n+2}]\ge 2e$, then Theorem \ref{thm:beligeneral}(iii) fails at $i=n+1$ for either $N=H_{1}^{n}(1)$ or $N=H_{1}^{n}(\Delta)$ by Lemma \ref{lem:classicalevenlatticeproperty2}(i), a contradiction. Hence $ d((-1)^{(n+2)/2}a_{1,n+2})\ge 2e>d[(-1)^{(n+2)/2}a_{1,n+2}] $. So we must have $m\ge n+3$ and $  d[(-1)^{(n+2)/2}a_{1,n+2}]=\alpha_{n+2}<2e $.
	
	\textbf{(b) $ R_{n+2}=0 $.}
	
	By Proposition \ref{prop:Ralphaproperty3}(i), we have $ R_{n+2}\ge R_{2}=0 $. Suppose $ R_{n+2}\ge 1 $. This combined with (a) shows that Theorem \ref{thm:beligeneral}(iii) fails at $i=n+1$ for either $N=C_{1}^{n}(\omega)$ or $N=C_{2}^{n}(\omega)$ by Lemma \ref{lem:classicalevenlatticeproperty2}(ii). This is a contradiction. (b) is proved.
	
	Now, combining the assumption, (a) and (b), we conclude that
	 \begin{align}\label{eq:R4alpha4}
		1<R_{n+2}+d[(-1)^{(n+2)/2}a_{1,n+2}]=0+\alpha_{n+2}=\alpha_{n+2}<2e.
	\end{align}
 	Note that $ \alpha_{n+2}\in \mathbb{Z} $ by Proposition \ref{prop:alphaproperty}(i). This implies from \eqref{eq:R4alpha4} that $2\le \alpha_{n+2}<2e$, i.e. $ e>1 $. This contradicts the assumption $ e=1 $ and thus the claim is proved. 
	
	\medskip
	Next, we are going to show that $M$ satisfies $J_{2}^{E}(n)$. By the claim \eqref{eq:da1n+2} and Lemma \ref{lem:0-1} with $j=n+2\ge 4$, we have $R_{n+2}\in \{0,1\}$. 
	
	 To show $ \alpha_{n+1}=1 $, by $J_{1}^{\prime E}(n)$, we have $R_{i}=0$ for $1\le i\le n+1$. If $R_{n+2}=1$, then $ \alpha_{n+1}=1 $ by Proposition \ref{prop:alphaproperty}(iii). If $R_{n+2}=0$, since $\alpha_{n}=1$, Proposition \ref{prop:Ralphaproperty3}(vi) implies $ \alpha_{n+1}=1 $.
	
	By the claim, it remains to show $d[(-1)^{(n+2)/2}a_{1,n+2}]\ge 1-R_{n+2}$ for the second equality in $J_{2}^{E}(n)$. For $1\le i\le n+1$, since $\alpha_{i}=1$,  Proposition \ref{prop:alphaproperty}(v) implies $d[-a_{i}a_{i+1}]\ge 1+R_{i}-R_{i+1}\ge 1-R_{n+2}$. Hence $d[(-1)^{(n+2)/2}a_{1,n+2}]\ge 1-R_{n+2}$ by the domination principle, as required.
	
 	Finally, if $n=2$, since $ FM $ is $2$-universal, we have either $ m\ge 5 $, or $ m=4 $ and $ FM\cong \mathbb{H}^{2} $ by \cite[Theorem 2.3]{hhx_indefinite_2021}. If $ m=4 $, then $ d[a_{1,4}]=d(a_{1,4})=\infty $, which contradicts $ R_{4}+d[a_{1,4}]=1$. Thus we must have $ m\ge 5 $.
	
	\textbf{(iii)$ \Rightarrow $(i):} This follows by Corollary \ref{cor:B3}(ii).
\end{proof}
\begin{lem}\label{lem:classicalcon4even}
	 Suppose that $ FM $ is n-universal and $ M $ satisfies $ J_{1}^{\prime E}(n) $ in Lemma \ref{lem:classicalcon12even} and $ J_{2}^{E}(n) $ in Theorem \ref{thm:classicaleven-nuniversaldyadic}. Then the following conditions are equivalent.
	\begin{enumerate}
		\item[\rm (i)] Theorem \ref{thm:beligeneral}(iv) holds for all n-ary $ \mathcal{O}_{F} $-lattices $ N $. 
		
		\item[\rm (ii)] Theorem \ref{thm:beligeneral}(iv) holds for all the lattices $N$ in the following list if $ m\ge n+3 $ and $ R_{n+3}-R_{n+2}>2e $:
		\begin{align*}
			 C_{1}^{n}(c)\;\;\text{and}\;\;C_{2}^{n}(c)\;\;\text{with}\;\;c\in F^{\times}/F^{\times 2}\;\;\text{and}\;\;  d(c)\in \{0,1\}  
		\end{align*}
		(cf. Definition \ref{defn:classicallattices}).
		
		\item[\rm (iii)] $ M $ satisfies $ J_{3}^{E}(n) $ in Theorem \ref{thm:classicaleven-nuniversaldyadic}.
	\end{enumerate} 
\end{lem}
\begin{proof}
	\textbf{(i)$ \Rightarrow $(ii):} It is trivial.
	
	\textbf{(ii)$ \Rightarrow $(iii):} Assume $ R_{n+3}-R_{n+2}>2e $. Then $ \alpha_{n+2}>2e>d[(-1)^{(n+2)/2}a_{1,n+2}]$ by Proposition \ref{prop:Rproperty}(i) and Lemma \ref{lem:0-1}. Hence  $d((-1)^{(n+2)/2}a_{1,n+2})=d[(-1)^{(n+2)/2}a_{1,n+2}] \in \{0,1\}$ by Lemma \ref{lem:0-1}.
	
	Write $V:=[a_{1},\ldots,a_{n+2}]$. Let $N=C_{\nu}^{n}(c)$, with $\nu\in\{1,2\}$ and $c=(-1)^{(n+2)/2}a_{1,n+2}\in F^{\times}/F^{\times 2}$. Then $d(c)\in \{0,1\}$. Now $\det V=a_{1,n+2}=(-1)^{(n+2)/2}c=-\det FN$. Since $S_{n}=1-d(c)=R_{n+2}$, we have $R_{n+3}>S_{n}+2e=R_{n+2}+2e$, so $FN=[b_{1},\ldots,b_{n}]$ is represented by $V=[a_{1},\ldots,a_{n+2}]$ by condition (ii). Hence $V$ represents both $FC_{1}^{n}(c)\cong W_{1}^{n}(c)$ and $ FC_{2}^{n}(c)\cong W_{2}^{n}(c)$, which contradicts \cite[Lemma 3.13]{HeHu2}. Thus $R_{n+3}-R_{n+2}\le 2e$.
	
	\textbf{(iii)$ \Rightarrow $(i):} This follows by Corollary \ref{cor:B4}(iii).
\end{proof}
\begin{proof}[Proof of Theorem \ref{thm:classicaleven-nuniversaldyadic}]
We claim the following equivalence holds:
	  \begin{align}\label{equi:even}
		J_{1}^{\prime E}(n),\;J_{2}^{\prime E}(n)\;\text{and}\; J_{2}^{E}(n)\Longleftrightarrow \;	J_{1}^{ E}(n)\;\text{and}\; J_{2}^{E}(n).
	\end{align}
	Necessity is clear. For sufficiency, since $R_{i}=0$ for $1\le i\le n+1$ and $\alpha_{n+1}=1$, Proposition \ref{prop:Ralphaproperty3}(vi) implies that $\alpha_{i}=1$ for $1\le i\le n$. Thus the claim is proved. Now the theorem follows immediately by the claim, Lemmas \ref{lem:classicalcon12even}, \ref{lem:classicalcon3even}, \ref{lem:classicalcon4even} and Theorem \ref{thm:beligeneral}.
\end{proof}

\section{Classic $ n $-universality conditions for odd $ n $}\label{sec:classicalodd-con}
Throughout this section, we assume that $ n $ is an odd integer and $ m\ge n+2\ge 5 $. 
\begin{thm}\label{thm:classicalodd-nuniversaldyadic}
	$ M $ is n-universal if and only if $m\ge n+3$ and $ M $ satisfies  the following conditions:
	
 	$ J_{1}^{O}(n) $: $R_{i}=0$ for $1\le i\le n$, $\alpha_{n}=1$ and $R_{n+1}+d[(-1)^{(n+1)/2}a_{1,n+1}]=1$.
 
 	 $ J_{2}^{O}(n) $: If either $ R_{n+1}=1 $ or $ R_{n+2}>1 $, then $ \alpha_{n+2}\le G_{n} $, where
	 \begin{equation}\label{eq5.1new}
 		\begin{split}
 			G_n:&= 2(e-\lfloor(R_{n+2}-R_{n+1})/2\rfloor)-1\\
 			&=\begin{cases}
 			2e-R_{n+2}+R_{n+1}-1 &\text{if $ R_{n+2}-R_{n+1} $ is even}, \\
 			2e-R_{n+2}+R_{n+1} &\text{if $ R_{n+2}-R_{n+1} $ is odd.}
 			\end{cases}
 		\end{split}
 	\end{equation}
	
	 $ J_{3}^{O}(n) $: $ R_{n+3}-R_{n+2}\le 2e $.	 	
\end{thm}
\begin{proof}
	First, $J_{1}^{O}(n)$ is the same as $J_{1}^{E}(n-1)$ and $J_{2}^{E}(n-1)$. Then, since the conditions $J_{2}^{O}(n)$ and $\alpha_{n}=1$ hold, Lemma \ref{lem:simplifythm} below implies $J_{3}^{E}(n-1)$. Hence the theorem is equivalent to Proposition \ref{prop:classicalodd-nuniversaldyadic}. 
	
	We will show a series of lemmas to complete the proof of Proposition \ref{prop:classicalodd-nuniversaldyadic} in the coming discussion.
\end{proof}
\begin{prop}\label{prop:classicalodd-nuniversaldyadic}
	$ M $ is n-universal if and only if $ FM $ is n-universal and $ M $ satisfies $ J_{1}^{E}(n-1) $, $ J_{2}^{E}(n-1) $, $ J_{3}^{E}(n-1) $ in Theorem \ref{thm:classicaleven-nuniversaldyadic} and $J_{2}^{O}(n)$ and $J_{3}^{O}(n)$ in Theorem \ref{thm:classicalodd-nuniversaldyadic}.
\end{prop}
\begin{lem}\label{lem:simplifythm}
	Suppose that $ M $ satisfies $ J_{2}^{O}(n) $. If $\alpha_{n}=1$, then $R_{n+2}-R_{n+1}\le 2e-1$, i.e. $J_{3}^{E}(n-1)$; if moreover either $R_{n+1}=1$ or $ R_{n+2}>1$, then  $ R_{n+3}-R_{n+2}\le 2e-1$.
\end{lem}
\begin{proof}
   Since $\alpha_{n}=1$, we have $R_{n+1}\in \{0,1\}$ by Proposition \ref{prop:alphaproperty}(iii). 
   
    If $ R_{n+2}-R_{n+1}\ge 2e $, then $R_{n+2}\ge R_{n+1}+2e\ge 2e>1$. Hence
 	\begin{align*}
 		\alpha_{n+2}\le G_{n}=2(e-\lfloor(R_{n+2}-R_{n+1})/2\rfloor)-1\le 2(e-\lfloor 2e/2\rfloor)-1=-1
 	\end{align*}
 	by $J_{2}^{O}(n)$. This contradicts Proposition \ref{prop:alphaproperty}(i) and so $ R_{n+2}-R_{n+1}\le 2e-1 $. 

 	If either $R_{n+1}=1$ or $R_{n+2}>1$, from  \cite[Remark 5.2]{HeHu2} we have $ R_{n+2}\ge 1 $. Since $R_{n+1}\in \{0,1\}$, we further have $R_{n+2}-R_{n+1}\ge 0$. Hence
 	\begin{align*}
 		\alpha_{n+2}\le G_{n}=2(e-\lfloor(R_{n+2}-R_{n+1})/2\rfloor)-1\le 2(e-\lfloor 0/2\rfloor)-1=2e-1
 	\end{align*}
 	by $J_{2}^{O}(n)$. So Proposition \ref{prop:Rproperty}(i) implies $ R_{n+3}-R_{n+2}\le 2e-1 $.
\end{proof}
\begin{lem}\label{lem:classicalcon12odd}
	 Suppose that $ FM $ is n-universal. The following conditions are equivalent.
	\begin{enumerate}
		\item[\rm (i)] Theorem \ref{thm:beligeneral}(i)(ii) hold for all n-ary $ \mathcal{O}_{F} $-lattices $ N $.
		
		\item[\rm (ii)] Theorem \ref{thm:beligeneral}(i)(ii) hold for $ N=C_{1}^{n}(\omega)  $ (cf. Definition \ref{defn:classicallattices}).
		
		\item[\rm (iii)] $ M $ satisfies $ J_{1}^{\prime E}(n-1) $ in Lemma \ref{lem:classicalcon12even}.
	\end{enumerate}
\end{lem}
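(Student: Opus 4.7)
The plan is to follow the structural blueprint of Lemma~\ref{lem:classicalcon12even}, adapting it to the odd setting where a single test lattice suffices because the assumed $J_2^E(n-1)$ already pins down much of the structure at positions $n$ and $n+1$. The implication (i)$\Rightarrow$(ii) is immediate.

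For (ii)$\Rightarrow$(iii), I would apply Theorem~\ref{thm:beligeneral}(1)(2) to the test lattice $N=C_2^{O,n}(-\omega)=\mathbf{H}_0^{(n-3)/2}\perp\prec 1,-1,\omega\succ$. This $N$ has a good BONG whose entries are all units, so $S_i=0$ for every $1\le i\le n$; moreover, $d(-b_{n-1}b_n)=d(\omega)=1$ while $d(-b_ib_{i+1})=d(1)=\infty$ for $1\le i\le n-2$. A short backward induction on $i$ using the definition of $\beta_i$, Proposition~\ref{prop:alphaproperty}(ii) to rule out $\beta_i=0$, and the fact that $e\ge 1$ then yields $\beta_i=1$ for all $1\le i\le n-1$. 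Feeding this into $B_1(1)$ gives $R_1\le S_1=0$, which combined with~\eqref{eq:integralcondition} forces $R_1=0$. Since~\eqref{eq:classicallyintegral} and the good BONG monotonicity~\eqref{eq:GoodBONGs} force $R_i\ge 0$ throughout, both branches of $B_1(i)$ for $2\le i\le n$ collapse to $R_i=0$. Applying $B_2(1)$ then gives $\alpha_1=A_1(M,N)\le d[a_1b_1]\le\beta_1=1$, and Proposition~\ref{prop:alphaproperty}(ii) excludes $\alpha_1=0$ since $R_2-R_1=0\neq -2e$, whence $\alpha_1=1$. The equality $R_i+R_{i+1}=0$ for $1\le i\le n-1$ together with Proposition~\ref{prop:Rproperty}(vi) propagates $\alpha_1=\alpha_2=\cdots=\alpha_{n-1}=1$, establishing $J_1^{\prime E}(n-1)$.

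For (iii)$\Rightarrow$(i), given $R_i=0$ for $1\le i\le n$, $\alpha_i=1$ for $1\le i\le n-1$, and the additional datum $\alpha_n=1$ from $J_2^E(n-1)$, I would verify $B_1(j)$ and $B_2(j)$ for $1\le j\le n$ against an arbitrary classic integral $\mathcal{O}_F$-lattice $N$ of rank $n$. For $B_1(j)$: when $j$ is odd, \eqref{eq:GoodBONGs} and classic integrality give $S_j\ge S_1\ge 0=R_j$; when $j$ is even, the second branch applies because $S_{j-1}+S_j\ge S_1+S_2\ge 0$ by monotonicity and~\eqref{eq:classicallyintegral}, while $R_j+R_{j+1}=0$. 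For $B_2(j)$ with $1\le j\le n-1$ the argument is the direct analogue of the case analysis of Lemma~\ref{lem:classicalcon12even}: split on $S_j\ge 1$ versus $S_j<1$, further split on the parity of $j$, and exploit $\alpha_{j+1}=1$ or $\alpha_{j+2}=1$ to bound $A_j$ from above by $d[a_{1,j}b_{1,j}]$. For $B_2(n)$, I would combine $\alpha_n=1$ with the identity $R_{n+1}+d[(-1)^{(n+1)/2}a_{1,n+1}]=1$ from $J_2^E(n-1)$ to control $A_n$, then split on the parity of $\sum_{i=1}^n S_i$ and invoke the domination principle on a suitable consecutive pair $-b_{k-1}b_k$ to produce either $d[a_{1,n}b_{1,n}]=0$ (odd parity) or $\alpha_n=1$ (even parity) as a matching upper bound.

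The main obstacle is the verification of $B_2(n)$ in (iii)$\Rightarrow$(i): because both $M$ at positions $1,\ldots,n$ and the test lattice $N$ have odd rank in the relevant range, the parity of $\sum_{i=1}^n S_i$ interacts nontrivially with $d(a_{1,n}b_{1,n})$, and the borderline situation $e=1$ with $S_n=-e$ requires one to recover an analogue of the $\beta_{n-1}=0$ reduction used in the even-case proof via~\eqref{eq:BONGs} and Proposition~\ref{prop:Rproperty}(iv). Once this bookkeeping is handled, the remainder of the argument runs parallel to Lemma~\ref{lem:classicalcon12even}.
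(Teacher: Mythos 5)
Your implications (i)$\Rightarrow$(ii) and (ii)$\Rightarrow$(iii) coincide with the paper's argument: the paper likewise extracts $R_i=0$ for $1\le i\le n$ from $B_1$ together with \eqref{eq:integralcondition}, \eqref{eq:classicallyintegral} and \eqref{eq:GoodBONGs}, and obtains $\alpha_1=\cdots=\alpha_{n-1}=1$ from $B_2(1)$, the bound $\beta_1=\beta_{n-1}\le d(\omega)=1$ and Proposition \ref{prop:Rproperty}(vi). In (iii)$\Rightarrow$(i), your reduction of $B_1(j)$ and of $B_2(j)$ for $j\le n-1$ to the even-rank case (Lemma \ref{lem:classicalcon12even} with $n$ replaced by $n-1$, using that $J_2^{E}(n-1)$ supplies $J_1^{\prime\prime E}(n-1)$) is also exactly what the paper does.

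The gap is in your treatment of $B_2(n)$, which you yourself flag as the main obstacle but then dispatch too quickly. When $\sum_{i=1}^{n}S_i$ is odd, $\ord(a_{1,n}b_{1,n})$ is odd and $d[a_{1,n}b_{1,n}]=0$ holds automatically; the content of $B_2(n)$ is then the inequality $A_n\le 0$, i.e.\ essentially that $S_n\ge R_{n+1}+d[-a_{1,n+1}b_{1,n-1}]$ (or that one of the other two terms defining $A_n$ is $\le 0$). Producing ``$d[a_{1,n}b_{1,n}]=0$ as a matching upper bound'' is not an argument for that, and a single application of the domination principle to one pair $-b_{k-1}b_k$ does not yield it. The paper instead argues by contradiction: assuming $S_n$ is smaller than both $R_{n+1}+d[-a_{1,n+1}b_{1,n-1}]-d[a_{1,n}b_{1,n}]$ and $R_{n+1}+2e-2d[a_{1,n}b_{1,n}]$, it establishes the chain of assertions (a)--(e) (largely imported from the integral-case proof of \cite[Lemma 5.2]{HeHu2}) --- in particular that $d[a_{1,n}b_{1,n}]=0$, that $S_{j-1}=0$ for the even index $j$ produced by domination, and that $-S_i+\beta_{i-1}=1-R_{n+1}$ for all $j\le i\le n$ --- and then a parity count shows that $S_i-S_{i-1}$ is even for $i\in[j+1,n]^{O}$ and $S_i$ is even for $i\le j-1$, forcing $d(a_{1,n}b_{1,n})\ge 1$ and contradicting $d[a_{1,n}b_{1,n}]=0$. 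Your sketch names some of the ingredients (parity of $\sum S_i$, domination, the $e=1$, $S_n=-e$ borderline) but supplies neither the contradiction structure nor the intermediate assertions, and without them the odd-parity case of $B_2(n)$ is not closed.
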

\begin{proof}
	\textbf{(i)$ \Rightarrow $(ii)}: It is trivial.
	
	\textbf{(ii)$ \Rightarrow $(iii)}:  Take $N=C_{1}^{n}(\omega)$. Then $S_{i}=0$ for $1\le i\le n$ by Lemma \ref{lem:S-invariant}(ii). Applying Theorem \ref{thm:beligeneral}(i), we have $R_{1}\le S_{1}=0 $. Hence $ R_{1}=0 $ by \eqref{eq:integralcondition}. By \cite[Lemma 4.6(i)]{beli_representations_2006}, we also have $R_{n-1}+R_{n}\le  S_{n-1} +S_{n} =0$. Proposition \ref{prop:Ralphaproperty3}(iv) implies $ R_{i}=0 $ for $ 1\le i\le n $.
	
	By Lemma \ref{lem:S-invariant}(iii), we have $\beta_{1}=1$. Applying Theorem \ref{thm:beligeneral}(ii) with $i=1$, we have 
	\begin{align*}
		\min\{e,d[-a_{1,2}]\}=A_{1}(M,C_{1}^{n}(\omega))\le d[a_{1}b_{1}]\le \beta_{1}=1.
	\end{align*}
	Since $R_{2}-R_{1}=0$, it follows that $ \alpha_{1}=\min\{e,d[-a_{1,2}]\}\le 1 $ by \eqref{eq:alpha-defn}. So $ \alpha_{i}=1 $ for $1\le i\le n-1$ by Proposition \ref{prop:Ralphaproperty3}(vi).
	
	\textbf{(iii)$ \Rightarrow $(i)}: It is straightforward from Corollaries \ref{cor:B1}(ii) and \ref{cor:B2}(iii).
\end{proof}
\begin{lem}\label{lem:oddlatticeproperty2-1}
	Suppose that $ M $ satisfies $J_{1}^{\prime E}(n-1)$ and $ J_{2}^{E}(n-1)$. If either $ R_{n+1}=1 $ or $ R_{n+2}>1 $, then $ d((-1)^{(n+1)/2}a_{1,n+1})=1-R_{n+1} $, $ ((-1)^{(n+1)/2}a_{1,n+1})^{\#} $ is a unit and $ d(((-1)^{(n+1)/2}a_{1,n+1})^{\#})=2e+R_{n+1}-1 $.
\end{lem}
\begin{proof}
	Since $R_{n-1}=0$, applying Lemma \ref{lem:0-1} with $j=n+1$, we have $R_{n+1}\in \{0,1\}$. Then the argument is similar to \cite[Lemma 5.8]{HeHu2}.
\end{proof}
\begin{lem}\label{lem:oddlatticeproperty2-2}
	Suppose that $ M $ satisfies $J_{1}^{\prime E}(n-1)$ and $J_{2}^{E}(n-1)$. Assume that $ \alpha_{n+2}>G_{n}$ (cf. $\eqref{eq5.1new}$) and either $ R_{n+1}=1 $ or $ R_{n+2}>1 $. Let $c=(-1)^{(n+1)/2}a_{1,n+2}$ and $\tilde{c}=(-1)^{(n+1)/2}a_{1,n+1} $.
	\begin{enumerate}
		\item[\rm (i)] We have $R_{n+2}>S_n$ and $d[-a_{1,n+1}b_{1,n-1}]+d[-a_{1,n+2}b_{1,n}]>2e+S_n-R_{n+2}$ for both $N=C_{1}^{n}(c)$ and $N=C_{1}^{n}(c\tilde{c}^{\#})$.
		
		\item[\rm (ii)] $[a_1,\ldots, a_{n+1}]$ does not represent $FN=[b_1,\ldots, b_n]$ for $N=C^n_1(c)$ or $N=C^n_1(c\tilde{c}^{\#})$.
	\end{enumerate}
	
 	Thus Theorem \ref{thm:beligeneral}(iii) fails at $ i=n+1 $ for at least one of the lattices  $C_{1}^{n}(c) $ and $ C_{1}^{n}(c\tilde{c}^{\#}) $.
\end{lem}
\begin{proof}
	(i) Following the argument in \cite[Lemma 5.9(i)]{HeHu2}, we only need to show
	 $d[-a_{1,n+1}b_{1,n-1}]=1-R_{n+1}$. By Lemma \ref{lem:S-invariant}(ii), we have $S_{n}-S_{n-1}\not=-2e$, so Proposition \ref{prop:Rproperty}(ii) implies $\beta_{n-1}\ge 1$. This combined with $d((-1)^{(n-1)/2}b_{1,n-1})=\infty$ shows that $d[(-1)^{(n-1)/2}b_{1,n-1}]\ge 1$.
	
	By $J_{2}^{E}(n-1)$ and Lemma \ref{lem:0-1}, we have $d[(-1)^{(n+1)/2}a_{1,n+1}]=1-R_{n+1}$ and $ d[(-1)^{(n+1)/2}a_{1,n+1}]\in \{0,1\}$. If $ d[(-1)^{(n+1)/2}a_{1,n+1}]=0$, then $R_{n+1}=1$ and so $d[-a_{1,n+1}b_{1,n-1}]=0=1-R_{n+1}$ by the domination principle. If $ d[(-1)^{(n+1)/2}a_{1,n+1}]=1$, then $R_{n+1}=0$. By Proposition \ref{prop:Ralphaproperty3}(i), we have $R_{n+2}-R_{n+1}=R_{n+2}\ge 0$, so Proposition \ref{prop:alphaproperty}(ii) implies $\alpha_{n+1}\ge 1$. By Lemma \ref{lem:oddlatticeproperty2-1}, we also have $d((-1)^{(n+1)/2}a_{1,n+1})=1-R_{n+1}=1<d((-1)^{(n-1)/2}b_{1,n-1})$. Hence $d(-a_{1,n+1}b_{1,n-1})=1$ by the domination principle. So 
	\begin{align*}
		d[-a_{1,n+1}b_{1,n-1}]=\min\{d(-a_{1,n+1}b_{1,n-1}),\alpha_{n+1},\beta_{n-1}\}=1=1-R_{n+1},
	\end{align*}
 as required.
 
	(ii) Assume that $V:=[a_{1},\ldots,a_{n+1}]$ represents both $FC^n_1(c)$ and $FC^n_1(c\tilde{c}^{\#})$. Since $\det V=a_{1,n+1}=(-1)^{(n+1)/2}\tilde{c}$, by \cite[63:21]{omeara_quadratic_1963}, we have $
			FC_{1}^{n}(c)\perp [-c\tilde{c}]\cong FV \cong FC^n_1(c\tilde{c}^{\#})\perp [-c\tilde{c}\tilde{c}^{\#}]$
	and thus
	\begin{align*}
		\mathbb{H}^{(n-1)/2}\perp [c]\perp [-c\tilde{c}]\cong  \mathbb{H}^{(n-1)/2}\perp [c\tilde{c}^{\#}]\perp [-c\tilde{c}\tilde{c}^{\#}].
	\end{align*}  
 	 This implies $[c,-c\tilde{c}]\cong [c\tilde{c}^{\#}, -c\tilde{c}\tilde{c}^{\#}]$ by Witt's cancellation. Scaling by $c$, we get  $[1,-\tilde{c}]\cong [\tilde{c}^{\#}, -\tilde{c}\tilde{c}^{\#}]$. Hence $\tilde{c}^{\#}\rep [1,-\tilde{c}]$ and so $(\tilde{c}^{\#},\tilde{c})_{\mathfrak{p}}=1$, which contradicts \cite[Proposition 3.2]{HeHu2}.
\end{proof}
\begin{lem}\label{lem:classicalcon3odd}
	 Suppose that $ FM $ is $ n $-universal and $ M $ satisfies $ J_{1}^{\prime E}(n-1) $ in Lemma \ref{lem:classicalcon12even}, $ J_{2}^{E}(n-1) $ and $ J_{3}^{E}(n-1) $ in Theorem \ref{thm:classicaleven-nuniversaldyadic}. Then the following conditions are equivalent.
	\begin{enumerate}
		\item[\rm (i)] Theorem \ref{thm:beligeneral}(iii) holds for all n-ary $ \mathcal{O}_{F} $-lattices $ N $. 
		
		\item[\rm (ii)] Theorem \ref{thm:beligeneral}(iii) holds for $ N$ in the following list:
		\begin{align*}
			C_{1}^{n}(c),\;C_{1}^{n}(c\tilde{c}^{\#}),\;\text{with}\;c=(-1)^{(n+1)/2}a_{1,n+2}\quad\text{and}\quad \tilde{c}=(-1)^{(n+1)/2}a_{1,n+1}
		\end{align*}
		 (cf. \text{Definition \ref{defn:classicallattices}}), if either $ R_{n+1}=1 $ or $ R_{n+2}>1 $. 
		
		\item[\rm (iii)] $ M $ satisfies $ J_{2}^{O}(n) $ in Theorem \ref{thm:classicalodd-nuniversaldyadic}.
	\end{enumerate} 
\end{lem}
\begin{proof} 
	\textbf{(i)$ \Rightarrow $(ii)}: It is trivial.
	
	\textbf{(ii)$ \Rightarrow $(iii)}:  Assume $ \alpha_{n+2}>G_{n} $. If either $ R_{n+1}=1 $ or $ R_{n+2}>1 $, then, by Lemma \ref{lem:oddlatticeproperty2-2}, Theorem \ref{thm:beligeneral}(iii) fails at $ i=n+1 $ for either $ N=C_{1}^{n}(c) $ or $ N=C_{1}^{n}(c\tilde{c}^{\#}) $, which contradicts condition (ii). 		
	
	\textbf{(iii)$ \Rightarrow $(i)}: By $J_{1}^{\prime E}(n-1)$ and  $J_{2}^{E}(n-1)$, we have $R_{i}=0$ for $1\le i\le n$, $\alpha_{n}=1$ and $d[(-1)^{(n+1)/2}a_{1,n+1}]=1-R_{n+1}$. Hence Theorem \ref{thm:beligeneral}(iii) holds for $1\le i\le n$ by Corollary \ref{cor:B3}(iii).
	 	
	It remains to show Theorem \ref{thm:beligeneral}(iii) holds at the index $i=n+1$. First, $R_{n+2}\ge 0$ by Proposition \ref{prop:Ralphaproperty3}(i). Also, $R_{n+1}\in \{0,1\}$ by Lemma \ref{lem:0-1}. If $R_{n+1}=0$ and $R_{n+2}\in \{0,1\}$, then we are done by Lemma  \ref{lem:B3-n-n+1-odd}(ii). Hence we may let $R_{n+1}=1$ or $R_{n+2}>1$.
	 	
	Assume $d[-a_{1,n+1}b_{1,n-1}]+d[-a_{1,n+2}b_{1,n}]>2e+S_{n}-R_{n+2}$. By $ J_{2}^{O}(n) $, $ d[-a_{1,n+2}b_{1,n}]\le \alpha_{n+2}\le G_{n}  $. By  Lemma \ref{lem:An-odd-2}, $d[-a_{1,n+1}b_{1,n-1}]\le S_{n}-R_{n+1}+d[a_{1,n}b_{1,n}]$. Combining these with the assumption, we see that 
	 	\[
	 	S_{n}-R_{n+1}+d[a_{1,n}b_{1,n}]+G_n\ge  d[-a_{1,n+1}b_{1,n-1}]+d[-a_{1,n+2}b_{1,n}]>2e+S_{n}-R_{n+2}.
	 	\]It follows that
	 	\[
	  2e-R_{n+2}+R_{n+1}-G_n<d[a_{1,n}b_{1,n}]\le \alpha_{n}=1.
	 	\]
	 	But $2e-R_{n+2}+R_{n+1}-G_n\in \{0,1\}$ from \eqref{eq5.1new}. Hence we must have $ 2e-R_{n+2}+R_{n+1}-G_{n}=0 $ and $ d[a_{1,n}b_{1,n}]=1 $. The former equality implies that $ \ord(a_{n+1}a_{n+2})=R_{n+1}+R_{n+2} $ is odd; the latter equality implies that $ \ord(a_{1,n}b_{1,n}) $ is even. So $ \ord(a_{1,n+2}b_{1,n}) $ is odd and hence $ d[-a_{1,n+2}b_{1,n}]=0 $. Combining this with the assumption, we deduce that $R_{n+2}-R_{n+1}>2e-1$ by Lemma \ref{lem:An-odd-2}. This contradicts Lemma \ref{lem:simplifythm}.
\end{proof}
\begin{lem}\label{lem:classicalcon4odd}
	 Suppose that $ FM $ is n-universal and $ M $ satisfies $ J_{1}^{\prime E}(n-1) $ in Lemma \ref{lem:classicalcon12even}, $ J_{2}^{E}(n-1) $ and $ J_{3}^{E}(n-1) $ in Theorem \ref{thm:classicaleven-nuniversaldyadic} and $ J_{2}^{O}(n) $ in Theorem \ref{thm:classicalodd-nuniversaldyadic}. Then the following conditions are equivalent.
	\begin{enumerate}
		\item[\rm (i)] Theorem \ref{thm:beligeneral}(iv) holds for all n-ary $ \mathcal{O}_{F} $-lattices $ N $. 
		
		\item[\rm (ii)] Theorem \ref{thm:beligeneral}(iv) holds for the lattices $C_{1}^{n}(c)$ and $C_{2}^{n}(c)$, with $c=(-1)^{(n+1)/2}a_{1,n+2}$, if $ R_{n+3}-R_{n+2}>2e $ (cf. Definition \ref{defn:classicallattices}).
		
		\item[\rm (iii)] $ M $ satisfies $ J_{3}^{O}(n) $ in Theorem \ref{thm:classicalodd-nuniversaldyadic}.
	\end{enumerate}
\end{lem}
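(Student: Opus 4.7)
Implication (i)$\Rightarrow$(ii) is immediate. The remaining two directions follow the strategy of Lemma \ref{lem:classicalcon4even}.

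For (iii)$\Rightarrow$(i), the plan is to show that the premise of $B_4(i)$ in Theorem \ref{thm:beligeneral} is unsatisfiable for every relevant $i$, namely $1 < i \le \min\{m-2, n+1\}$. From $J_1^{\prime E}(n-1)$ one has $\alpha_j = 1$ for $1 \le j \le n-1$; $J_2^{E}(n-1)$ gives $\alpha_n = 1$; $J_3^{E}(n-1)$ gives $R_{n+2} - R_{n+1} \le 2e$ (hence $\alpha_{n+1} \le 2e$ by Proposition \ref{prop:Rproperty}(i)); and $J_2^{O}(n)$ gives $\alpha_{n+2} \le 2e$. Therefore $R_{j+1} - R_j \le 2e$ for every $j$ with $1 \le j \le n+2$, which precludes the strict inequality $R_{i+2} > S_{i-1} + 2e \ge R_{i+1} + 2e$ required by the premise of $B_4(i)$.

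For (ii)$\Rightarrow$(iii), I argue by contradiction, assuming $R_{n+3} - R_{n+2} > 2e$, so $\alpha_{n+2} > 2e$ by Proposition \ref{prop:Rproperty}(i). Combining $J_1^{O}(n)$, Proposition \ref{prop:alphaproperty}, and Proposition \ref{prop:Rproperty}(iv), one verifies that the only admissible pairs are $(R_{n+1}, R_{n+2}) = (0,0)$ or $(0,1)$: the cases $R_{n+1} = R_{n+2} = 1$ and $R_{n+2} \ge 2$ are ruled out by $J_1^{O}(n)$ (each would force $\alpha_{n+2} < 2e$), while $R_{n+1} = 1,\; R_{n+2} = 0$ is excluded since an odd gap $R_{n+2}-R_{n+1}=-1$ cannot be negative by Proposition \ref{prop:Rproperty}(iv). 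Using $R_i = 0$ for $i \le n$, $\alpha_i = 1$ for $i \le n-1$, and the defect identity coming from $J_2^{E}(n-1)$, a repeated application of \cite[63:19]{omeara_quadratic_1963} produces a decomposition
\begin{align*}
	[a_1,\ldots,a_{n+2}] \cong \mathbb{H}^{(n-1)/2} \perp V
\end{align*}
for some ternary quadratic space $V$ with $dV \equiv \varepsilon \pi^{R_{n+1}+R_{n+2}}$ modulo squares, where $\varepsilon = -\delta_{n+1}\mu_{n+2}$. When $(R_{n+1}, R_{n+2}) = (0,0)$ I apply $B_4(n+1)$ to $N = C_2^{O,n}(\varepsilon)$ and $C_4^{O,n}(\varepsilon)$ (both with $S_n = 0$); when $(R_{n+1}, R_{n+2}) = (0,1)$ I apply it to $C_1^{O,n}(\varepsilon)$ and $C_3^{O,n}(\varepsilon)$ (both with $S_n = 1$). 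In either sub-case the premise $R_{n+3} > S_n + 2e \ge R_{n+2} + 2e$ is satisfied under the contradiction hypothesis, and the chosen pair of ternary tails shares the determinant $dV$ while carrying opposite Hasse symbols (thereby exhausting the two isometry classes of ternary spaces of that determinant, by Definition \ref{defn:classicaloddlattices} and \cite[63:20]{omeara_quadratic_1963}). The conclusion of $B_4(n+1)$ gives $FC_j^{O,n}(\varepsilon) \rep [a_1,\ldots,a_{n+2}]$ for each chosen $j$; Witt cancellation of $\mathbb{H}^{(n-3)/2}$ from both sides (present via $\mathbf{H}_0^{(n-3)/2}$) together with the determinant match reduces the remaining binary complement to $\mathbb{H}$, forcing $T_j \cong V$ for the ternary tail $T_j$ of each chosen $C_j^{O,n}(\varepsilon)$. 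This contradicts the non-isometry of the two tails in the pair.

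The main obstacle will be the orthogonal splitting $[a_1,\ldots,a_{n+2}] \cong \mathbb{H}^{(n-1)/2} \perp V$: unlike the even setting of Lemma \ref{lem:classicalcon4even}, where the remainder is quaternary and further splits off a hyperbolic plane, here the immediate remainder $V$ is already ternary, and the passage from $T_j \rep \mathbb{H}^{(n-1)/2} \perp V$ to the isometry $T_j \cong V$ relies on a careful matching of determinants together with the cancellation of one additional hyperbolic plane extracted from the binary complement. A secondary subtlety is the sub-case analysis on $(R_{n+1}, R_{n+2})$, which explains why the four specified test lattices suffice despite only two being active in any given sub-case and furnishes the required non-isometric ternary pair in each case.
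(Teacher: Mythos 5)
Your proposal is correct and follows essentially the same route as the paper: for (iii)$\Rightarrow$(i) the premise of $B_{4}(i)$ is shown to be vacuous (the paper delegates the range $i\le n$ to Lemma \ref{lem:classicalcon4even} and handles $B_4(n+1)$ directly via $J_2^O(n)$, which is the same content as your direct verification), and for (ii)$\Rightarrow$(iii) the paper likewise reduces to $(R_{n+1},R_{n+2})\in\{(0,0),(0,1)\}$, splits off $\mathbb{H}^{(n-1)/2}$ to leave a ternary space $W$, and applies $B_4(n+1)$ to the pair $C_1^{O,n}(\varepsilon),C_3^{O,n}(\varepsilon)$ or $C_2^{O,n}(\varepsilon),C_4^{O,n}(\varepsilon)$ according to the sub-case, deriving the same contradiction from the two non-isometric ternary tails via \cite[63:21]{omeara_quadratic_1963} and Witt cancellation.
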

\begin{proof}
	\textbf{(i)$ \Rightarrow $(ii)}: It is trivial.
	
	\textbf{(ii)$ \Rightarrow $(iii)}: Assume $ R_{n+3}-R_{n+2}>2e $. By Lemma \ref{lem:0-1} with $j=n+1$, we have $R_{n+1}\in \{0,1\} $. By Proposition \ref{prop:Ralphaproperty3}(i), we also have $R_{n+2}\ge  0$. If either $ R_{n+1}=1 $ or $ R_{n+2}>1 $, then $ R_{n+3}-R_{n+2}\le 2e-1 $ by Lemma \ref{lem:simplifythm}, which contradicts the assumption. Thus $R_{n+1}\not=1$, i.e. $R_{n+1}=0$ and $ R_{n+2} \in \{0,1\}$.  
	
	Take $ N=C_{\nu}^{n}(c) $, with $ \nu\in \{1,2\} $ and $ c=(-1)^{(n+1)/2}a_{1,n+2} $. Since $R_{n}=0$, Proposition \ref{prop:Ralphaproperty3}(ii) implies that $ R_{i} $ is even for $ 1\le i\le n$. Since also $R_{n+1}=0$, $ \ord(c)=\ord( a_{1,n+2})\equiv R_{n+2}\pmod{2}$. Hence $ S_{n}=R_{n+2} $ by Lemma \ref{lem:S-invariant}(ii), so this combined with the assumption shows that the condition $ R_{n+3}>S_{n}+2e\ge R_{n+2}+2e$ is satisfied. Hence $ FN=[b_{1},\ldots,b_{n}]$ is represented by $V:=[a_{1},\ldots,a_{n+2}]  $ by Theorem \ref{thm:beligeneral}(iv). But $ \det V=a_{1,n+2}=(-1)^{(n+1)/2}c=-\det FN$, so $ V $ cannot represent both $ FC_{1}^{n}(c) $ and $ FC_{2}^{n}(c) $ by \cite[Lemma 3.13]{HeHu2}. A contradiction is derived and so $ R_{n+3}-R_{n+2}\le 2e $.
 
	\textbf{(iii)$ \Rightarrow $(i)}: It is clear from Corollary \ref{cor:B4}(iii).
\end{proof}
\begin{proof}[Proof of Proposition \ref{prop:classicalodd-nuniversaldyadic}]
	Recall from \eqref{equi:even} that
	\begin{align}\label{equi:odd}
		J_{1}^{\prime E}(n-1)\;\text{and}\; J_{2}^{E}(n-1)\Longleftrightarrow 	J_{1}^{E}(n-1)\;\text{and}\; J_{2}^{E}(n-1).
	\end{align}
	It is straightforward by Lemmas \ref{lem:classicalcon12odd}, \ref{lem:classicalcon3odd}, \ref{lem:classicalcon4odd} and Theorem \ref{thm:beligeneral}.
\end{proof}

\section{Proof of Theorems \ref{thm:classicalnuniversaldyadic} and \ref{thm:necessarycon}}\label{sec:proof-main}
In this section, we will prove Theorem \ref{thm:classicalnuniversaldyadic}, which provides a criterion not involving $ \alpha $-invariants. 
\begin{lem}\label{lem:J2E}
	Let $ n $ be an even integer and $n\ge 2$. Suppose that $M$ satisfies $ J_{1}^{E}(n) $ in Theorem \ref{thm:classicaleven-nuniversaldyadic}. Then Theorem \ref{thm:classicalnuniversaldyadic}(ii)(1) holds if and only if $M$ satisfies $ J_{2}^{E}(n) $ in Theorem \ref{thm:classicaleven-nuniversaldyadic}.
\end{lem}
\begin{proof}
	By $ J_{1}^{E}(n) $, we have $ R_{i}=0$ for $0\le i\le n+1 $. Note from Lemma \ref{lem:0-1} that $J_{2}^{E}(n)$ implies $R_{n+2}\in \{0,1\}$. 
	
	We may assume $R_{n+2}\in \{0,1\}$. If $R_{n+2}=1$, then $ \alpha_{n+1}=1 $ by Proposition \ref{prop:alphaproperty}(iii) and $d[(-1)^{(n+2)/2}a_{1,n+2}]=0=1-R_{n+2}$ by the odd parity of $\ord(a_{1,n+2})$.
	
	Suppose $R_{n+2}=0$. Then $R_{n+3}-R_{n+2}=R_{n+3}\ge 0$ by Proposition \ref{prop:Ralphaproperty3}(i), so  Proposition \ref{prop:alphaproperty}(ii) implies $\alpha_{n+2}\ge 1$. Since $\ord(a_{1,n+2})$ is even, $d((-1)^{(n+2)/2}a_{1,n+2})\ge 1$ and hence 
	\begin{align*}
	d[(-1)^{(n+2)/2}a_{1,n+2}]=\min\{d((-1)^{(n+2)/2}a_{1,n+2}), \alpha_{n+2}\}\ge 1.
	\end{align*}
	By Proposition \ref{prop:alphaproperty}(iii), we see that $\alpha_{n+2}=1$ if and only if $R_{n+3}\in \{0,1\}$.
	 So
	\begin{align*}
			d[(-1)^{(n+2)/2}a_{1,n+2}]=1  \iff d((-1)^{(n+2)/2}a_{1,n+2})=1\quad\text{or}\quad R_{n+3}\in \{0,1\}.
	\end{align*}
	 Thus under the condition (ii)(1)(a), $ J_{2}^{E}(n) $ holds if and only if $ \alpha_{n+1}=1 $.
	 
	 If $e=1$, since $ R_{n+2}-R_{n+1}=0$, Proposition \ref{prop:alphaproperty}(iii) implies $ \alpha_{n+1}=1 $.  
	
	If $ R_{n+3}=1 $, then $ \alpha_{n+2}=1$ by Proposition \ref{prop:alphaproperty}(iii) and so $ \alpha_{n+1}=1 $ by Proposition \ref{prop:Ralphaproperty3}(vi).  
	
	If $ d((-1)^{(n+2)/2}a_{1,n+2})=1 $, then, by \eqref{eq:alpha-defn} and the domination principle, we have
	\begin{align*}
		\alpha_{j-1}\le R_{j}-R_{j-1}+d(-a_{j-1}a_{j})=d(-a_{j-1}a_{j})=1
	\end{align*}
	 for some $ j\in [1,n+2]^{E} $. Hence $ \alpha_{n+1}=1 $ by Proposition \ref{prop:Ralphaproperty3}(vi). 
	
	Suppose the condition (ii)(1)(b) holds, i.e. $e>1$,  $ R_{n+2}=R_{n+3}=0 $ and $d((-1)^{(n+2)/2}a_{1,n+2})>1$. Recall \eqref{T} and write $ T_{j}=T_{j}^{(n+1)} $ for $ 0\le j\le m-1 $ for short. Then $ \alpha_{n+1}=\min\{T_{0},\ldots,T_{m-1}\} $. Note that
	\begin{align*}
		T_{0}=\dfrac{R_{n+2}-R_{n+1}}{2}+e=e>1.
	\end{align*}
	So $ \alpha_{n+1}=1 $ if and only if $ 1\in \{T_{1},\ldots,T_{m-1}\} $. This is equivalent to, either 
	\begin{align*}
				1=T_{j}=R_{n+2}-R_{j}+d(-a_{j}a_{j+1})&=d(-a_{j}a_{j+1})=R_{j+1}+d(-a_{j}a_{j+1})
	\end{align*}
	for some $j $ with $1\le j\le n+1 $, or 
	\begin{align*}
		1=T_{j}=R_{j+1}-R_{n+1}+d(-a_{j}a_{j+1})=R_{j+1}+d(-a_{j}a_{j+1})
	\end{align*}
 for some $j $ with $ n+2\le j\le m-1 $. 
\end{proof}
\begin{lem}\label{lem:J2O}
	Let $ n  $ be an odd integer and $n\ge 3$. Suppose that $M$ satisfies $ J_{1}^{O}(n) $ in Theorem \ref{thm:classicalodd-nuniversaldyadic}. Then Theorem \ref{thm:classicalnuniversaldyadic}(iii)(2) holds if and only if $M$ satisfies $ J_{2}^{O}(n) $ in Theorem \ref{thm:classicalodd-nuniversaldyadic}.
\end{lem}
\begin{proof}
	Write $ T_{j}=T_{j}^{(n+2)} $ for $ 0\le j\le m-1 $ for short (cf. \eqref{T}). Then $ \alpha_{n+2} =\min\{T_{0},\ldots,T_{m-1}\}$. We may suppose either $ R_{n+1}=R_{n+2}=1 $ or $ R_{n+2}>1 $. By $J_{1}^{O}(n)$, we have $R_{i}=0$ for $1\le i\le n$ and $\alpha_{n}=1$. By Proposition \ref{prop:Ralphaproperty3}(iv), we have $R_{n+1}\ge 0$, so Proposition \ref{prop:alphaproperty}(iii) implies $R_{n+1}\in \{0,1\}$. Hence
	\begin{align}
               &-R_{i}+d(-a_{i}a_{i+1})=d(-a_{i}a_{i+1})\ge 1\ge 1-R_{n+1}\quad\text{for $ 1\le i\le n-1$}, \label{j=1-n-1}	\\ 
	 &-R_{n}+d(-a_{n}a_{n+1})\ge \alpha_{n}-R_{n+1}=1-R_{n+1}\quad(\text{by \eqref{eq:alpha-defn}}).\label{j=n}
	 	\end{align}
	 We claim that $ T_{j}+G_{n}\ge 2T_{0} $ for $1\le j\le n+1$. By \eqref{eq5.1new}, $t:=2e-R_{n+2}+R_{n+1}-G_n\in \{0,1\}$. We have
	\begin{align*}
		T_{j}+G_{n}&=(R_{n+3}-R_{j}+d(-a_{j}a_{j+1}))+(2e-R_{n+2}+R_{n+1}-t)\\
		&=(R_{n+3}-R_{n+2}+2e)+(R_{n+1}-R_{j}+d(-a_{j}a_{j+1})-t)\\
		&=2T_{0}+R_{n+1}-R_{j}+d(-a_{j}a_{j+1})-t.
	\end{align*}
	It is sufficient to show that $ R_{n+1}-R_{j}+d(-a_{j}a_{j+1})-t\ge 0 $. For $1\le j\le n$, by \eqref{j=1-n-1} and \eqref{j=n}, we have $  R_{n+1}-R_{j}+d(-a_{j}a_{j+1})-t\ge R_{n+1}+(1-R_{n+1})-t\ge 0 $. For $j=n+1$, by \eqref{eq5.1new}, we have
	\begin{align*}
		d(-a_{n+1}a_{n+2})
		\begin{cases}
			\ge 1=t   &\text{if $R_{n+2}-R_{n+1}$ is even},  \\
			=0=t       &\text{if $R_{n+2}-R_{n+1}$ is odd}.
		\end{cases}
	\end{align*}
	 In both cases we have $ R_{n+1}-R_{n+1}+d(-a_{n+1}a_{n+2})-t=d(-a_{n+1}a_{n+2})-t\ge 0 $. Thus the claim is proved.
	
	Note that $ \alpha_{n+2}=\min\{T_{0},\ldots,T_{m-1}\}\le G_{n} $ if and only if $ T_{k}\le G_{n} $ for some $k\in \{0,\ldots,m-1\} $. But for $ 1\le j\le n+1 $, if $ T_{j}\le G_{n}$, then, by the claim,  $ 2T_{0}\le T_{j}+G_{n}\le 2G_{n}$, equivalently, $ T_{0}\le G_{n} $. Hence $ \alpha_{n+2}\le G_{n} $ if and only if $ T_{k}\le G_{n} $ for some $ k\in \{0,n+2,\ldots,m-1\} $.
	
	Now, one can check that
	\begin{align*}
		T_{0}\le G_{n}\quad&\Longleftrightarrow\quad\dfrac{R_{n+3}-R_{n+2}}{2}+e\le 2e-R_{n+2}+R_{n+1}-t\\
		&\Longleftrightarrow\quad R_{n+3}+R_{n+2}-2R_{n+1}\le 2e-2t
	\end{align*}
	and 
	\begin{align*}
		T_{j}\le G_{n}\quad& \Longleftrightarrow\quad R_{j+1}-R_{n+2}+d(-a_{j}a_{j+1})\le 2e-R_{n+2}+R_{n+1}-t\\
		& \Longleftrightarrow\quad d(-a_{j}a_{j+1})\le 2e+R_{n+1}-R_{j+1}-t
	\end{align*}
 for $n+2\le j\le m-1$. Recall from \eqref{eq5.1new} that $ t=1 $ or $ 0 $ accordingly as $ R_{n+2}-R_{n+1} $ is even or odd, so these inequalities agree with those in Theorem \ref{thm:classicalnuniversaldyadic}(iii)(2).	
\end{proof}
\begin{proof}[Proof of Theorem \ref{thm:classicalnuniversaldyadic}]
 Note that the condition $ J_{2}^{E}(n) $ implies $ m\ge 5 $ if $ n=2 $. Hence, by \cite[Theorem 2.3]{hhx_indefinite_2021}, $ FM $ is $ n $-universal if and only if $ m\ge n+3 $.
	
If $ n $ is even, then we have the following equivalence:
	\begin{align*}
		&J_{1}^{E}(n)\iff \text{(i)}\;\text{and}\;R_{n+1}=0;\;\; J_{3}^{E}(n)\iff  \text{(ii)(2)}; \\ &J_{2}^{E}(n)\iff  \text{(ii)(1)} \;\;\text{(by Lemma \ref{lem:J2E})}.
	\end{align*}
Hence we are done by Theorem \ref{thm:classicaleven-nuniversaldyadic}.
	If $ n $ is odd, then we have the following equivalence:
	\begin{align*}
		&J_{1}^{O}(n)\iff \text{(i) and (iii)(1)} \;\;\text{(by Lemma \ref{lem:J2E})};\\
		&J_{2}^{O}(n)\iff  \text{(iii)(2)} \;\;\text{(by Lemma \ref{lem:J2O})};\;\;J_{3}^{O}(n)\iff \text{(iii)(3)}.
	\end{align*}
Hence we are done by Theorem \ref{thm:classicalodd-nuniversaldyadic}.
\end{proof}
\begin{proof}[Proof of Theorem \ref{thm:necessarycon}]
	Assume $ e:=e_{\mathfrak{p}}>1 $. Let $ M:=L_{\mathfrak{p}}\cong \prec a_{1},\ldots,a_{m}\succ $ relative to some good BONG, $R_{i}=R_{i}(M)$ and $\alpha_{i}=\alpha_{i}(M)$. Suppose $n$ to be odd. Since $M$ is classic $n$-universal, we have $R_{i}=0$ for $1\le i\le n$ and $\{R_{n+1},R_{n+2}\}\subseteq \{0,1\}$ by  \cite[Theorem 2.1]{beli_universal_2020} and Theorem \ref{thm:classicalnuniversaldyadic}(i) and (iii)(1). The hypothesis $d(a_{n}a_{n+1})>0$ implies that $R_{n+1}$ is even, so $R_{n+1}=0$. Similarly, we have $ R_{n+2}=0$. Also, Proposition \ref{prop:Ralphaproperty3}(iv) implies that $R_{i}\ge 0$ for $1\le i\le m$.
	
	We have $ d(a_{i}a_{i+1})>1 $ for $1\le i\le m-1$ from the hypothesis. Since $ d(-1)\ge e>1$ by \cite[Lemma 2.1]{HeHu3}, we have $ d(-a_{i}a_{i+1})>1 $ by the domination principle. 
	
	If $ n=1 $, then $T_{0}^{(1)}=e>1$ and $T_{k}^{(1)}=R_{k+1}-R_{1}+d(-a_{k}a_{k+1})\ge d(-a_{k}a_{k+1})>1$ for $1\le k\le m-1$ (cf. \eqref{T}). Hence $\alpha_{1}=\min\{T_{0}^{(1)},\ldots,T_{m-1}^{(1)}\}>1$, which contradicts \cite[Theorem 2.1]{beli_universal_2020}. If $n\ge 3$, then  $d((-1)^{(n+1)/2}a_{1,n+1})>1$ by the domination principle. Combining this with $e>1$ and $R_{n+1}=R_{n+2}=0$, we conclude that $d(-a_{j}a_{j+1})=1-R_{j+1}\le 1$ for some $1\le j\le n$ by Theorem \ref{thm:classicalnuniversaldyadic}(iii)(1)(b), a contradiction; a similar argument can be applied for even $n\ge 2$. With above discussion, we deduce $e=1$.
\end{proof}
\begin{cor}\label{cor:diagonalizable}
	If $M\cong \prec a_{1},\ldots, a_{n+3}\succ$ is $n$-universal, then $M$ is diagonalizable and $M\cong \langle a_{1},\ldots, a_{n+3}\rangle$.
\end{cor}
\begin{proof}
	Without loss of generality, assume that $n\ge 2$ is even. Since $M\cong \prec a_{1},\ldots, a_{n+3}\succ$ is $n$-universal, by Theorem \ref{thm:classicalnuniversaldyadic}, we have $R_{i}=0$ for $1\le i\le n+1$ and $R_{n+2}\in \{0,1\}$. By Proposition \ref{prop:Ralphaproperty3}(i), we also have $R_{n+3}\ge 0$, moreover, $R_{n+3}\ge 1$ by Proposition \ref{prop:Rproperty}(iv) provided that $R_{n+2}=1$. Hence the sequence $R_{i}$ ($1\le i\le n+3$) is non-decreasing. So $M$ is diagonalizable and $M\cong \langle a_{1},\ldots, a_{n+3}\rangle$ by \cite[Corollaries 3.4(ii) and 4.4(i)]{beli_integral_2003}.
\end{proof}

\section{Proof of Theorem \ref{thm:classicalnuniversaldyadic15theorem}} \label{sec:classicalcstheorem-mini}
The minimal set for tesing $ n $-universal integral lattices was established in \cite[Proposition 3.2]{hhx_indefinite_2021} and \cite[Theorem 1.2]{HeHu2} by finding all rank $ n $ maximal lattices (in the sense of \cite[\S 82H]{omeara_quadratic_1963}). However, such approach cannot be applied to classic integral cases directly because a maximal lattice may not be classic integral (see \cite[82:21]{omeara_quadratic_1963} for example). Instead, we determine a minimal testing set by introducing some auxiliary lattices.
\begin{lem}\label{lem:rep-even-odd}
	Let $N$ be an $\mathcal{O}_{F}$-lattice of even rank $n\ge 2$. Let $ \omega $ be the unit as in Definition \ref{defn:classicallattices}.   
	\begin{enumerate}
		\item[\rm (i)] We have either $FN\rep FC_{1}^{n+1}(\omega)$ or $FN\rep FC_{2}^{n+1}(\omega)$. 
		
		\item[\rm (ii)] We have either $N\rep C_{1}^{n+1}(\omega)$ or $N\rep C_{2}^{n+1}(\omega)$.
	\end{enumerate}

	 Thus if an $\mathcal{O}_{F}$-lattice represents both of $C_{1}^{n+1}(\omega)$ and $C_{2}^{n+1}(\omega)$, then it is $n$-universal.
\end{lem}
\begin{proof}
	(i) Let $\det N=c$. Then $FN\perp [(-1)^{n/2}c\omega]\cong W_{1}^{n+1}(\omega)$ or $W_{2}^{n+1}(\omega)$ by \cite[Proposition 3.5(ii)]{HeHu2}. Since $\det N\det W_{j}^{n+1}(\omega)=(-1)^{n/2}c\omega$ ($j=1,2$), we have either $FN\rep W_{1}^{n+1}(\omega)=FC_{1}^{n+1}(\omega)$ or $FN\rep W_{2}^{n+1}(\omega)=FC_{2}^{n+1}(\omega)$ by \cite[63:21]{omeara_quadratic_1963}.
	
	(ii) Take $M=C_{j}^{n+1}(\omega)$, $j=1,2$. By Lemma \ref{lem:S-invariant}(iii), we have $R_{i}=0$ for $1\le i\le n$, $R_{n+1}=1-d(\omega)$ and $\alpha_{n}=1$. Then, by the domination principle, we see that
	\begin{align*}
		d((-1)^{(n+1)/2}a_{1,n+1})=d(-a_{n}a_{n+1})=d(\omega)=1=1-R_{n+1}.
	\end{align*}
	Hence $M$ satisfies the conditions in Lemma \ref{lem:m=n+1representation}, so either $N\rep C_{1}^{n+1}(\omega)$ or $N\rep C_{2}^{n+1}(\omega)$ by (i) and Lemma \ref{lem:m=n+1representation}. (ii) is proved.
	
	Clearly, if an $\mathcal{O}_{F}$-lattice $L$ represents both $C_{1}^{n+1}(\omega)$ and $C_{2}^{n+1}(\omega)$, then $L$ represents all lattices with rank $n$ by (ii), i.e. $L$ is $n$-universal.
\end{proof}
\begin{cor}\label{cor:J2En-1}
	Let $ n $ be an odd integer and $n\ge 3$. If $ M $ represents $ C_{1}^{n}(\omega) $ and $ C_{2}^{n}(\omega) $, then it satisfies $J_{1}^{\prime E}(n-1)$ in Lemma \ref{lem:classicalcon12even}, $ J_{2}^{E}(n-1) $ and $ J_{3}^{E}(n-1) $ in Theorem \ref{thm:classicaleven-nuniversaldyadic}.
\end{cor}
\begin{proof}
    If $ M $ represents $ C_{1}^{n}(\omega) $ and $ C_{2}^{n}(\omega) $, then it is $(n-1)$-universal by Lemma \ref{lem:rep-even-odd}. Hence $ M $ satisfies $J_{1}^{\prime E}(n-1)$, $ J_{2}^{E}(n-1) $ and $ J_{3}^{E}(n-1) $ by equivalence \eqref{equi:even} and Theorem \ref{thm:classicaleven-nuniversaldyadic}. 
\end{proof}
\begin{lem}\label{lem:FM n-universal}
	Let $ n $ be an integer and $ n\ge 2 $. If an $ \mathcal{O}_{F} $-lattice $M$ represents all lattices in $ \mathcal{C}_{e}^{n}  $, then $FM$ is $n$-universal.
\end{lem}
\begin{proof}
	Suppose that $ M $ represents all lattices in $ \mathcal{C}_{e}^{n} $. 
	
	For even $n\ge 2$, since $M$ represents $H_{e}^{n}(1)$ and $C_{1}^{n}(\omega)$, it satisfies $J_{1}^{\prime E}(n)$ and $J_{2}^{\prime E}(n)$ by Lemma \ref{lem:classicalcon12even}. Since $M$ also represents $C_{2}^{n}(\omega) $ and $ H_{e}^{n}(\Delta) $ (if $ e=1 $), it satisfies $J_{2}^{E}(n)$ by Lemma \ref{lem:classicalcon3even}.
Assume $m=n+2$. Then
	\begin{align*}
		d((-1)^{(n+2)/2}a_{1,n+2})=d[(-1)^{(n+2)/2}a_{1,n+2})]\in \{0,1\}
	\end{align*}
	by Lemma \ref{lem:0-1}. Note that $ M $ represents $
		C_{1}^{n}(c)$ and $C_{2}^{n}(c)$, with $c\in F^{\times}/F^{\times 2}$ and $ d(c)\in \{0,1\}$. In particular,
	$M$ represents $C_{1}^{n}(c)$ and $C_{2}^{n}(c)$ with $c=(-1)^{(n+2)/2}a_{1,n+2}$. Thus $FM$ represents both $FC_{1}^{n}(c)$ and $FC_{2}^{n}(c)$. But this contradicts \cite[Lemma 3.13]{HeHu2} because of $\det FM=a_{1,n+2}=-\det FC_{j}^{n}(c)$ ($j=1,2$). Hence $ m\ge n+3 $ and so $ FM $ is $ n $-universal by \cite[Theorem 2.3]{hhx_indefinite_2021}.
	
	For odd $n\ge 3$, a direct computation shows that the spaces spanned by lattices in $ \mathcal{C}_{e}^{n} $ exhaust all possible $ n $-ary quadratic spaces  by \cite[Proposition 3.5(ii)]{HeHu2} and thus $ FM $ is $ n $-universal.
\end{proof} 
\begin{lem}\label{lem:classicalgivenM-15theorem}
	Let $ n $ be an integer and $ n\ge 2 $. Then an $ \mathcal{O}_{F} $-lattice $M$ is $n$-universal if and only if it represents all lattices in $ \mathcal{C}_{e}^{n}  $. 
\end{lem}
\begin{proof}
     Necessity is clear. For sufficiency, by Lemma \ref{lem:FM n-universal}, $FM$ is $n$-universal.
     
     For even $n\ge 2$, a direct computation shows that the lattices $H_{e}^{n}(1)$, $H_{e}^{n}(\Delta)$ (if $e=1$), $C_{1}^{n}(\omega)$,  $C_{2}^{n}(\omega)$, and $C_{1}^{n}(c)$ and $C_{2}^{n}(c)$, with $c\in F^{\times}/F^{\times 2}$ and $d(c)\in \{0,1\}$,  
	are contained in $ \mathcal{C}_{e}^{n} $. Hence the testing $ n $-universality of $ \mathcal{C}_{e}^{n} $ follows by Lemmas \ref{lem:classicalcon12even}, \ref{lem:classicalcon3even}, \ref{lem:classicalcon4even}, equivalence \eqref{equi:even} and Theorem \ref{thm:classicaleven-nuniversaldyadic}.
	
    For odd $n\ge 3$, since $M$ represents $ C_{1}^{n}(\omega) $ and $ C_{2}^{n}(\omega) $, it satisfies $J_{1}^{\prime E}(n-1)$, $J_{2}^{E}(n-1)$ and $J_{3}^{E}(n-1)$ by Corollary \ref{cor:J2En-1}. Also, for any $\varepsilon\in \mathcal{O}_{F}^{\times}$, the lattices $ C_{1}^{n}(\varepsilon) $, $ C_{2}^{n}(\varepsilon) $, $ C_{1}^{n}(\varepsilon\pi) $ and $ C_{2}^{n}(\varepsilon\pi) $
	are contained in $ \mathcal{C}_{e}^{n} $. Combining Lemmas \ref{lem:classicalcon12odd}, \ref{lem:classicalcon3odd}, \ref{lem:classicalcon4odd}, equivalence \eqref{equi:odd} and Proposition \ref{prop:classicalodd-nuniversaldyadic}, we show the testing $ n $-universality in this case.
\end{proof}
For even $n\ge 4$ and $c\in F^{\times}/F^{\times 2}$, we define the rank $n$ lattice
\begin{align*}
	P_{k}^{n}(c):=\mathbf{H}_{0}^{(n-4)/2}\perp
	\begin{cases}
	 \prec 1,-c,-1,c\succ &\text{if $k=1$}, \\
		 \prec 1,-c,-c^{\#},cc^{\#}\succ &\text{if $k=2$}.  
	\end{cases}
\end{align*}  
\begin{lem}\label{lem:P-spacerep}
	Suppose that $n\ge 2$ is even.  Let $M=P_{k}^{n+2}(a)$, with $k\in \{1,2\}$ and $a\in F^{\times}/F^{\times 2}$, and $N=C_{j}^{n}(c)$, with $j\in \{1,2\}$, $c\in F^{\times}/F^{\times 2}$ and $d(c)\in \{0,1\}$. Then
	\begin{enumerate}
	   \item[\rm (i)] $FN\rep FM$.
		
		\item[\rm (ii)] $FH_{e}^{n}(1)\nrep FP_{2}^{n+2}(\Delta)$.
		
		 \item[\rm (iii)] $FH_{1}^{n}(1)\rep FP_{1}^{n+2}(\omega)$ and  $FH_{1}^{n}(\Delta)\rep FP_{2}^{n+2}(\omega)$.
	\end{enumerate}
\end{lem}
\begin{proof}
	For (i), we have $\det FM=(-1)^{(n-2)/2}$ and $\det FN=(-1)^{n/2}c$. Since $d(c)\in \{0,1\}$, $-\det FN\det FM=c\not=1$. So $ FN\rep  FM $ by \cite[63:21]{omeara_quadratic_1963}. 
	
	For (ii) and (iii), we may assume $n=2$. Then $FH_{e}^{2}(1)\cong \mathbb{H}$ for $e\ge 1$ and $FP_{2}^{4}(\Delta)\cong [1,-\Delta,-\pi,\Delta\pi]\cong [1,-\Delta,\pi,-\Delta\pi]$. Thus (ii) follows by \cite[63:17]{omeara_quadratic_1963}.
	
	 By definition, $FP_{1}^{4}(\omega)\cong [1,-\omega,-1,\omega]$. By Proposition \ref{prop:isotropic-ternary}, $[1,-\omega,-1]$ is isotropic, so  $FP_{1}^{4}(\omega)$ represents $\mathbb{H}$. By Proposition \ref{prop:isotropic-ternary} again, $ [-\omega,-\omega^{\#},\omega\omega^{\#}]$ is anisotropic, so it is isometric to $ [-\Delta,\pi,-\Delta\pi]$ by \cite[Proposition 3.5(ii)]{HeHu2}. Hence $FP_{2}^{4}(\omega)\cong [1,-\omega,-\omega^{\#},\omega\omega^{\#}]\cong [1,-\Delta,\pi,-\Delta\pi] $ by \cite[63:17]{omeara_quadratic_1963}, thereby representing
	 $FH_{1}^{2}(\Delta)\cong [\pi,-\Delta\pi]$. 
\end{proof}
\begin{lem}\label{lem:P-invariants}
	Let $n$ be an even integer and $n\ge 2$.
	\begin{enumerate}
		\item[\rm (i)] If $M=P_{2}^{n+2}(\Delta)  $, then $R_{i}=0$ and $\alpha_{i}=1$ for $1\le i\le n$, and $R_{n+1}=R_{n+2}=1$.		
		
		\item[\rm (ii)] If $M=P_{k}^{n+2}(\omega)$, $k=1,2$, then $R_{i}=0$ for $1\le i\le n+2$ and $\alpha_{i}=1$ for $1\le i\le n+1$.
	\end{enumerate}
\end{lem}
\begin{proof}
	(i) By Lemma \ref{lem:BONGformaximallattice}, we have $R_{i}=0$ for $1\le i\le n$ and $R_{n+1}=R_{n+2}=1$. Since $R_{n+1}-R_{n}=1$, Proposition \ref{prop:alphaproperty}(iii) implies $\alpha_{n}=1$. So $\alpha_{i}=1$ for $1\le i\le n-1$ by Proposition \ref{prop:Ralphaproperty3}(vi).
	
	(ii) By Lemma \ref{lem:BONGformaximallattice}, we have $R_{i}=0$ for $1\le i\le n+2$. Since $R_{n}-R_{n-1}+d(-a_{n-1}a_{n})=d(\omega)=1$, Proposition \ref{prop:alphaproperty}(vi) implies $\alpha_{n-1}=1$. Hence $\alpha_{i}=1$ for $1\le i\le n+1$ by Proposition \ref{prop:Ralphaproperty3}(vi).
\end{proof}
\begin{lem}\label{lem:PDelta}
Suppose that $n\ge 2$ is even. Let $M=P_{2}^{n+2}(\Delta)$ and $N=C_{j}^{n}(c)$, with $j\in \{1,2\}$, $c\in F^{\times}/F^{\times 2}$ and $d(c)\in \{0,1\}$.
	\begin{enumerate}
		\item[\rm (i)] Theorem \ref{thm:beligeneral}(ii) holds at $i=n-2,n-1,n$ for $M$ and $N$.
		
		\item[\rm (ii)] Theorem \ref{thm:beligeneral}(iii) holds at $i=n-1,n,n+1$ for $M$ and $N$.
		
		\item[\rm (iii)] Theorem \ref{thm:beligeneral}(i)-(iv) holds for $M$ and $N$.
	\end{enumerate}
\end{lem}
\begin{proof}
	 By Lemma \ref{lem:P-invariants}(i), we have $R_{i}=0$ and $\alpha_{i}=1$ for $1\le i\le n$, and $R_{n+1}=R_{n+2}=1$. By Lemma \ref{lem:S-invariant}(iii), we also have $S_{i}=0$ for $1\le i\le n-1$, $S_{n}=1-d(c)$ and $\beta_{i}=1$ for $1\le i\le n-1$.
	
	(i) Let $i\in \{n-2,n-1\}$. Since $\ord(a_{1,i}b_{1,i}) $ is even, $d(a_{1,i}b_{1,i})\ge 1$. Since also $\alpha_{i}=\alpha_{i+1}=\beta_{i}=1$, we have
	\begin{align*}
		d[a_{1,i}b_{1,i}]=\min\{d(a_{1,i}b_{1,i}),\alpha_{i},\beta_{i}\}=1=\alpha_{i+1}.
	\end{align*}
	Combining with $R_{i+1}=S_{i}=0$, we deduce that
	\begin{align}\label{Ai}
		A_{i}\le R_{i+1}-S_{i}+d[-a_{1,i+1}b_{1,i-1}]=d[-a_{1,i+1}b_{1,i-1}]\le \alpha_{i+1}=d[a_{1,i}b_{1,i}].
	\end{align}
 	Note that $d(a_{1,n}b_{1,n})\ge 1$ or $=0$, according as $ S_{n}=0$ or $1$. Also, $\alpha_{n}=1$. Hence
   	\begin{align*}
   		d[a_{1,n}b_{1,n}]=\min\{d(a_{1,n}b_{1,n}),\alpha_{n}\}=
   		\begin{cases}
   			1  &\text{if $S_{n}=0$},  \\
   			0  &\text{if $S_{n}=1$}.
   		\end{cases}
   	\end{align*}
   	Therefore, $d[a_{1,n}b_{1,n}]=1-S_{n}$. Since $\ord(a_{1,n+1}b_{1,n-1})$ is odd, $d[-a_{1,n+1}b_{1,n-1}]=0$. Also, $R_{n+1}=1$. Hence
	\begin{align}\label{An}
		A_{n}&\le R_{n+1}-S_{n}+d[-a_{1,n+1}b_{1,n-1}]=1-S_{n}+0=1-S_{n}=d[a_{1,n}b_{1,n}].
	\end{align} 
	Combining \eqref{Ai} with \eqref{An}, we see that Theorem \ref{thm:beligeneral}(ii) holds for the indices $n-2,n-1$ and $n$.
	
	(ii) Since $S_{n-1}=0$ and $S_{n}=1-d(c)\ge 0$ and $R_{n+1}=R_{n+2}=1$, we have
	\begin{align}\label{2e-1}
		2e-1=2e+S_{n-1}-R_{n+1}\le 2e+S_{n}-R_{n+2}.
	\end{align}
	Note from (i) that $d[-a_{1,n+1}b_{1,n-1}]=0$ and $d[-a_{1,n}b_{1,n-2}]\le \alpha_{n}=1$. Hence 
	\begin{align}\label{B3-n}
		 d[-a_{1,n}b_{1,n-2}]+d[-a_{1,n+1}b_{1,n-1}]\le  1+0\le 2e-1=2e+S_{n-1}-R_{n+1}
	\end{align}
	by the equality in \eqref{2e-1}. Since $d(-a_{i-1}a_{i})\ge 2e$ for $i\in [1,n+2]^{E}$ and $d(-b_{i-1}b_{i})\ge 2e$ for $i\in [1,n-2]^{E}$, by the domination principle, we have 
	\begin{align*}
		d[-a_{1,n+2}b_{1,n}] \le 	d(-a_{1,n+2}b_{1,n})=d(-b_{n-1}b_{n})=d(c)\le 1.
	\end{align*}
 	Hence
	\begin{align}\label{B3-n+1}
		d[-a_{1,n+1}b_{1,n-1}]+d[-a_{1,n+2}b_{1,n}] \le 0+ 1\le 2e-1\le 2e+S_{n}-R_{n+2}
	\end{align}
	by the inequality in \eqref{2e-1}. Combining $R_{n}=0=S_{n-2}$, \eqref{B3-n} and \eqref{B3-n+1}, we see that Theorem \ref{thm:beligeneral}(iii) holds for the indices $n-1,n$ and $n+1$.
	
	(iii) Recall the invariants of $M$ and $N$. Theorem \ref{thm:beligeneral}(i) holds for $1\le i\le n$ by Corollary \ref{cor:B1}(i). Theorem \ref{thm:beligeneral}(ii) holds for $1\le i\le n-3$ by Corollary \ref{cor:B2}(i) and holds for $n-2\le i\le n$ by (i). Theorem \ref{thm:beligeneral}(iii) holds for $2\le i\le n-2$ by Corollary \ref{cor:B3}(i) and holds for $n-1\le i\le n+1$ by (ii). Theorem \ref{thm:beligeneral}(iv) holds for $2\le i\le n$ by Corollary \ref{cor:B4}(ii). 
\end{proof}
\begin{lem}\label{lem:Pomega-1}
 Suppose that $n\ge 2$ is even and $e=1$. Let $M=P_{k}^{n+2}(\omega)$ with $k\in \{1,2\}$.
	\begin{enumerate}
		 \item[\rm (i)] Theorem \ref{thm:beligeneral}(i)(ii) and (iv) holds for $M$ and any rank $n$ lattice $N$; 
		 
		 \item[\rm (ii)] Let $N=C_{j}^{n}(c)$, with $j\in\{1,2\}$, $c\in F^{\times}/F^{\times 2}$ and $d(c)\in \{0,1\}$. Then Theorem \ref{thm:beligeneral}(iii) holds for $M $ and $N$. 
		 
		 \item[] Thus Theorem \ref{thm:beligeneral}(i)-(iv) holds for $M$ and $N$.
		 
		 \item[\rm (iii)] Let $N=H_{1}^{n}(\mu)$ with $\mu\in \{1,\Delta\}$. Then Theorem \ref{thm:beligeneral}(iii) holds for $2\le i\le n$.
	\end{enumerate}
\end{lem}
\begin{proof}
	By Lemma \ref{lem:P-invariants}(ii), we have $R_{i}=0$ for $1\le i\le n+2$ and $\alpha_{i}=1$ for $1\le i\le n+1$. 
	
	(i) The first assertion follows by Corollaries \ref{cor:B1}(i) \ref{cor:B2}(ii) and \ref{cor:B4}(ii). 

	(ii) By Lemma \ref{lem:S-invariant}(iii), we have $S_{i}=0$ for $1\le i\le n-1$ and $S_{n}\in \{0,1\}$. Since $R_{i+1}=0\le S_{i-1} $ for $2\le i\le n+1$, Theorem \ref{thm:beligeneral}(iii) holds for $2\le i\le n+1 $.

	(iii) By Lemma \ref{lem:S-invariant}(i), we have $S_{i-1}=-S_{i}=e=1$ for $i\in [1,n]^{E}$.

	For $ i\in [2,n]^{O} $, we have $S_{i-1}-S_{i-2}=-2=-2e$, so $\beta_{i-2}=0$ by Proposition \ref{prop:alphaproperty}(ii). Since $R_{i+1}=0$ and $\alpha_{i+1}=1$, we have 
	\begin{align*}
		d[-a_{1,i}b_{1,i-2}]+d[-a_{1,i+1}b_{1,i-1} ]\le 	\beta_{i-2} +\alpha_{i+1}=0+1\le 2e-1=2e+S_{i-1}-R_{i+1}.
	\end{align*}
	For $ i\in [2,n]^{E} $, we have $ R_{i+1}=0<1=S_{i-1}$. So Theorem \ref{thm:beligeneral}(iii) holds for $ 2\le i\le n $. 
\end{proof}
\begin{lem}\label{lem:Pomega-2}
	 Suppose that $n\ge 2$ is even and $e=1$. Let $M_{k}=P_{k}^{n+2}(\omega)$ with $k\in \{1,2\}$, $N_{1}=H_{1}^{n}(1)$ and $N_{2}=H_{1}^{n}(\Delta)$.
	\begin{enumerate}
		 \item[\rm (i)] We have $R_{n+2}>S_n$ and $d[-a_{1,n+1}b_{1,n-1}]+d[-a_{1,n+2}b_{1,n}]>2e+S_n-R_{n+2}$ for $M=M_{k}$ and $N=N_{j}$, where $k,j\in \{1,2\}$.
		
		 \item[\rm (ii)] The associated $(n+1)$-ary space of $FP_{k}^{n+2}(\omega)$, $[a_1,\ldots, a_{n+1}]_{k}$, represents $FN_{j}=[b_1,\ldots, b_n]_{j}$ or not, according as $k=j$ or $k\not=j$.
		
		\item[] Thus for $M_{k}$ and $N_{j}$, Theorem \ref{thm:beligeneral}(iii) holds at $ i=n+1 $ if $k=j$ and fails at $i=n+1$ if $k\not=j$.
		
		 \item[\rm (iii)] If $j=k$, then $N_{j}\rep M_{k}$; if $j\not=k$, then $N_{j}\nrep M_{k}$.  
	\end{enumerate} 
\end{lem}
\begin{proof}
	(i) By Lemmas \ref{lem:P-invariants}(ii) and \ref{lem:S-invariant}(i), we have
	 $ R_{n+2}=0>-1=S_{n} $.  A direct computation shows that $d[-a_{1,n+2}b_{1,n} ]=d(-a_{1,n+2}b_{1,n} )\ge 2e$. Hence
	\begin{align*}
		d[-a_{1,n+1}b_{1,n-1}]+d[-a_{1,n+2}b_{1,n} ]\ge d[-a_{1,n+2}b_{1,n}]\ge 2e>2e-1=2e+S_{n}-R_{n+2}.
	\end{align*}

	(ii) Write $V_{k}:=[a_{1},\ldots,a_{n+1}]_{k}$. By definition and Proposition \ref{prop:isotropic-ternary}, $V_{k}\cong \mathbb{H}^{(n-2)/2}\perp U_{k}$ for some ternary space $U_{k}$, where $U_{1}\cong [1,-\omega, -1]$ is isotropic and $U_{2}\cong [1,-\omega,-\omega^{\#}]$ is anisotropic. On the other hand, $FN_{1}\cong \mathbb{H}^{n/2}$ and $FN_{2}\cong \mathbb{H}^{(n-2)/2}\perp [\pi,-\Delta\pi]$. Hence Lemma \ref{lem:spacerep-criterion}(ii) shows that $FN_{1}\rep V_{1}$ and $FN_{2}\nrep V_{1}$; Lemma \ref{lem:spacerep-criterion}(iii) shows that $FN_{2}\rep V_{2}$ and $FN_{1}\nrep V_{2}$. This shows (ii).

	(iii) If $j\not=k$, then, by (ii), Theorem \ref{thm:beligeneral}(iii) fails at $i=n+1$ for $M=M_{k}$ and $N=N_{j}$. Thus $N_{j}\nrep M_{k}$. If $j=k$, then $FN_{j}\rep FM_{k}$ by Lemma \ref{lem:P-spacerep}(iii), so $N_{j}\rep M_{k}$ by (ii), Lemma \ref{lem:Pomega-1}(i)(iii) and Theorem \ref{thm:beligeneral}. 
\end{proof}
\begin{lem}\label{lem:classicalminimal-even}
	Let $ n $ be an even integer and $ n\ge 2 $.
	\begin{enumerate}
		 \item[\rm (i)] If $ e>1 $, then $  P_{2}^{n+2}(\Delta) $ represents all lattices in $ \mathcal{C}_{e}^{n} $ except for $ H_{e}^{n}(1) $.
		 
		  \item[\rm (ii)] If $ e=1 $, then $ P_{1}^{n+2}(\omega) $ (resp. $P_{2}^{n+2}(\omega)$) represents all lattices in $ \mathcal{C}_{e}^{n} $ except for $  H_{e}^{n}(\Delta) $ (resp. $ H_{e}^{n}(1) $).
		 
		  \item[\rm (iii)] For any $c\in F^{\times}/F^{\times 2}$ with $d(c)\in \{0,1\}$, $  C_{1}^{n+2}(c) $ (resp. $  C_{2}^{n+2}(c) $) represents all lattices in $ \mathcal{C}_{e}^{n} $ except for $ C_{2}^{n}(c) $ (resp. $  C_{1}^{n}(c) $).
	\end{enumerate}
\end{lem}
\begin{proof}
	Take $N=C_{j}^{n}(c)$, with $j\in \{1,2\}$, $c\in F^{\times}/F^{\times 2}$ and $d(c)\in \{0,1\}$. 
	
	 (i)  By Lemma \ref{lem:P-spacerep}(i), we have $FN\rep  FP_{2}^{n+2}(\Delta) $. Combining Lemma \ref{lem:PDelta}(iii) and Theorem \ref{thm:beligeneral}, we futher have $N\rep P_{2}^{n+2}(\Delta)$.
		
	Since $ e>1 $, then $ H_{e}^{n}(\Delta) $ is ignored in $ \mathcal{C}_{e}^{n} $, it remains to show $H_{e}^{n}(1)\nrep  P_{2}^{n+2}(\Delta)$. This follows by Lemma \ref{lem:P-spacerep}(ii).
	
	(ii) Combining Lemmas \ref{lem:P-spacerep}(i), \ref{lem:Pomega-1}(ii) and Theorem \ref{thm:beligeneral}, we have $N\rep P_{k}^{n+2}(\omega)$ for $k=1,2$.
	
	 By Lemma \ref{lem:Pomega-2}(iii), we see that $H_{1}^{n}(1)\rep P_{1}^{n+2}(\omega)$, $H_{1}^{n}(\Delta)\nrep P_{1}^{n+2}(\omega)$, $H_{1}^{n}(\Delta)\rep P_{2}^{n+2}(\omega)$ and $H_{1}^{n}(1)\nrep P_{2}^{n+2}(\omega)$, as required.
	
	 (iii) Take $M=C_{1}^{n+2}(c)$, with $c\in F^{\times}/F^{\times 2}$ and $d(c)\in \{0,1\}$.  By Lemma \ref{lem:S-invariant}(iii), we have $R_{i}=0$ for $1\le i\le n+1$, $R_{n+2}=1-d(c)\in \{0,1\}$ and $\alpha_{n+1}=1$. If $R_{n+2}=0$, then $d(-a_{n+1}a_{n+2})=d(c)=1$. Since $d(-a_{i-1}a_{i})=\infty$ for $i\in [1,n]^{E}$, by the domination principle, we have $d((-1)^{(n+2)/2}a_{1,n+2})=d(-a_{n+1}a_{n+2})=1=1-R_{n+2}$. Hence $M$ satisfies the conditions in  Lemma \ref{lem:m=n+2representation}(i).
 
 	By \cite[Proposition 3.5(iii)]{HeHu2}, $ FC_{1}^{n+2}(c)\cong FW_{1}^{n+2}(c) $ represents $ FL$ for each lattice $ L $ in $ \mathcal{C}_{e}^{n} $ except for $ L\cong C_{2}^{n}(c) $. So (iii) follows by Lemma \ref{lem:m=n+2representation}(i). Similarly for $M=C_{2}^{n+2}(c)$.
\end{proof}
For odd $n\ge 3$ and $ c\in F^{\times}/F^{\times 2} $, we define the rank $n$ lattice
\begin{align*}
	\bar{C}_{1}^{n}(c):=\mathbf{H}_{0}^{(n-3)/2}\perp 
		\prec c,-c \omega,c\omega  \succ.
\end{align*}
\begin{lem}\label{lem:classicalminimal-odd}
	Let $ n $ be an odd integer and $ n\ge 3 $. Let $ \varepsilon\in\mathcal{O}_{F}^{\times} $.
	\begin{enumerate}
		\item[\rm (i)]  $ \bar{C}_{1}^{n+2}(\varepsilon) $ (resp. $C_{2}^{n+2}(\varepsilon) $ ) represents all lattices in $ \mathcal{C}_{e}^{n} $ except for $C_{2}^{n}(\varepsilon)  $ (resp. $C_{1}^{n}(\varepsilon)  $).
		
		\item[\rm (ii)] $ C_{1}^{n+2}(\varepsilon\pi)$ (resp. $ C_{2}^{n+2}(\varepsilon\pi)$) represents all lattices in $ \mathcal{C}_{e}^{n} $ except for $ C_{2}^{n}(\varepsilon\pi) $ (resp. $C_{1}^{n}(\varepsilon\pi) $). 
	\end{enumerate} 
\end{lem}
\begin{proof}
	(i) Take $M= \bar{C}_{1}^{n+2}(\varepsilon) $. By Lemma \ref{lem:BONGformaximallattice}, we have $R_{i}=0$ for $1\le i\le n+2 $. Since $R_{n+1}-R_{n}+d(-a_{n}a_{n+1})=d(\omega)=1 $, Proposition \ref{prop:alphaproperty}(vi) implies $\alpha_{n}=1$. Hence $ M $ satisfies the conditions in  Lemma \ref{lem:m=n+2representation}(ii). By \cite[Proposition 3.5(iii)]{HeHu2}, $ FM\cong FW_{1}^{n+2}(\varepsilon) $ represents $ FL$ for each lattice $ L $ in $ \mathcal{C}_{e}^{n} $ except for $ L\cong C_{2}^{n}(\varepsilon) $. So (i) follows by Lemma \ref{lem:m=n+2representation}(ii). A similar argument can be applied for  $M=C_{2}^{n+2}(\varepsilon)$.
	
	(ii) Take $M=C_{j}^{n+2}(\varepsilon\pi)$, $j\in \{1,2\}$. By Lemma \ref{lem:S-invariant}(ii), we have $R_{i}=0$ for $1\le i\le n+1$ and $R_{n+2}=1$. Hence $M $ satisfies the conditions in  Lemma \ref{lem:m=n+2representation}(ii). The remaining argument is similar to (i).
\end{proof}
\begin{proof}[Proof of Theorem \ref{thm:classicalnuniversaldyadic15theorem}]
	  (i) and (ii) follow by Lemma \ref{lem:classicalgivenM-15theorem} and Proposition \ref{prop:translation}. (iii) follows by Lemma \ref{lem:classicalminimal-even} for even $n\ge 2$ and by Lemma \ref{lem:classicalminimal-odd} for odd $n\ge 3$.
\end{proof}
 
\section{Applications to global representations} \label{sec:app}
Throughout this section, we assume that $K$ is an algebraic number field.
\begin{lem}\label{lem:defectP-p}
	Let $ E_{\mathfrak{P}}/K_{\mathfrak{p}} $ be a finite extension with discrete primes $\mathfrak{P}|\mathfrak{p}$ and $a\in  K_{\mathfrak{p}}$. 
	\begin{enumerate}
		\item[\rm (i)] We have $\ord_{\mathfrak{P}}(a)=\ord_{\mathfrak{p}}(a)e(\mathfrak{P}| \mathfrak{p})$;
		
		\item[\rm (ii)] We have $d_{\mathfrak{P}}(a)\ge d_{\mathfrak{p}}(a)e(\mathfrak{P}| \mathfrak{p})$.
		
		\item[\rm (iii)] If $\mathfrak{p}$ is dyadic and $L_{\mathfrak{p}}\cong \prec a_{1},\ldots, a_{m}\succ$ relative to a good BONG $\{x_{1},\ldots, x_{m}\}$, then $L_{\mathfrak{P}}\cong \prec a_{1},\ldots, a_{m}\succ$ relative to the good BONG $\{x_{1},\ldots, x_{m}\}$. 
	\end{enumerate} 
\end{lem}
\begin{proof}
	(i) See \cite[16:2]{omeara_quadratic_1963}. 
	
	(ii) Write $e=e(\mathfrak{P}|\mathfrak{p})$. For $a\in K_{\mathfrak{p}}$, if $\ord_{\mathfrak{p}}(a)$ is odd, then $d_{\mathfrak{P}}(a)\ge 0=d_{\mathfrak{p}}(a)e$. If $\ord_{\mathfrak{p}}(a)$ is even, we may assume   $a=1+\theta_{\mathfrak{p}}\pi_{\mathfrak{p}}^{k}$, with $\theta_{\mathfrak{p}}\in \mathcal{O}_{K_{\mathfrak{p}}}^{\times}$ and $k=d_{\mathfrak{p}}(a)$. Let $\Pi_{\mathfrak{P}} $ be a prime of $E_{\mathfrak{P}}$. Then $\pi_{\mathfrak{p}}=\varTheta_{\mathfrak{P}}\Pi_{\mathfrak{P}}^{e}$ for some $\varTheta_{\mathfrak{P}}\in \mathcal{O}_{E_{\mathfrak{P}}}^{\times}$, and so
	\begin{align*}
		a=1+\theta_{\mathfrak{p}}\pi_{\mathfrak{p}}^{k}=1+\theta_{\mathfrak{p}}\varTheta_{\mathfrak{P}}^{ek}\Pi_{\mathfrak{P}}^{ek}.
	\end{align*}
	Hence $d_{\mathfrak{P}}(a)\ge ke$ by the definition of quadratic defect.
	
	(iii) Write $r_{i}=\ord_{\mathfrak{p}}(a_{i})$ and $R_{i}=\ord_{\mathfrak{P}}(a_{i})$. Then $R_{i}=r_{i}e(\mathfrak{P}|\mathfrak{p})$ by (i) and $d_{\mathfrak{P}}(-a_{i}a_{i+1})\ge d_{\mathfrak{p}}(-a_{i}a_{i+1}))e(\mathfrak{P}|\mathfrak{p})$ by (ii). Hence, by \eqref{eq:BONGs}, we have
	\begin{align*}
		&R_{i+1}-R_{i}=(r_{i+1}-r_{i})e(\mathfrak{P}|\mathfrak{p})\ge -2e_{\mathfrak{p}}e(\mathfrak{P}|\mathfrak{p})=-2e_{\mathfrak{P}}, \\
		&	R_{i+1}-R_{i}+d_{\mathfrak{P}}(-a_{i}a_{i+1})\ge  (r_{i+1}-r_{i}+ d_{\mathfrak{p}}(-a_{i}a_{i+1}))e(\mathfrak{P}|\mathfrak{p})\ge 0\\
		\intertext{for $1\le i\le m-1$, and by \eqref{eq:GoodBONGs}, we also have}
		& R_{i+2}=r_{i+2}e(\mathfrak{P}|\mathfrak{p})\ge r_{i}e(\mathfrak{P}|\mathfrak{p})=R_{i}\quad\text{for $1\le i\le  m-2$.}
	\end{align*}
	Since $\{x_{1},\ldots,x_{m}\}$ is an orthogonal base of $K_{\mathfrak{p}}L_{\mathfrak{p}}$, it is also an orthogonal base of $E_{\mathfrak{P}}L_{\mathfrak{P}}$. Hence we are done by Lemma \ref{lem:goodBONGequivcon}.
\end{proof}
\begin{prop}\label{prop:Lp-nuniversal}
	Let $n$ be a positive integer. If an $ \mathcal{O}_{K} $-lattice $L$ represents all positive definite $\mathcal{O}_{K}$-lattices of rank $n$, then $L_{\mathfrak{p}}$ is $n$-universal for each non-archimedean prime $\mathfrak{p}$.  
	
	Thus if an $ \mathcal{O}_{K} $-lattice $L$ is $n$-universal, then $L_{\mathfrak{p}}$ is $n$-universal for each non-archimedean prime $\mathfrak{p}$. 
\end{prop}
\begin{proof}
	For a dyadic prime $\mathfrak{p}$, choose a positive definite classic integral $\mathcal{O}_{K}$-lattice $N$ of rank $n$. When $n\ge 2$, for each classic integral lattice $C$ in $\mathcal{C}_{e_{\mathfrak{p}}}^{n} $ (cf. Definition \ref{defn:classicallattices} and Proposition \ref{prop:translation}(iii)), by \cite[81:14]{omeara_quadratic_1963}, there exists an $\mathcal{O}_{K}$-lattice $M=M(\mathfrak{p},C)$ such that
	\begin{align*}
		\begin{cases}
			M_{\mathfrak{q}}=N_{\mathfrak{q}} &\text{if $\mathfrak{q}\not=\mathfrak{p}$},   \\
			M_{\mathfrak{p}}=C  &\text{if $\mathfrak{q}=\mathfrak{p}$}.
		\end{cases}
	\end{align*}
	By construction, $M$ is positive definite and classic integral. Hence, by the hypothesis, $L$ represents $M(\mathfrak{p},C)$ for each $C$ in $\mathcal{C}_{e_{\mathfrak{p}}}^{n}$. So $L_{\mathfrak{p}}$ represents all lattices in $\mathcal{C}_{e_{\mathfrak{p}}}^{n}$ and hence it is $n$-universal by Theorem \ref{thm:classicalnuniversaldyadic15theorem}. When $n=1$, put $\mathcal{C}_{e_{\mathfrak{p}}}^{1}=\{\langle \varepsilon\rangle,\langle \varepsilon\pi\rangle\mid \varepsilon\in\mathcal{O}_{F}^{\times}\}$ and use \cite[Lemma 2.2]{xu_indefinite_2020}. For a non-dyadic prime $\mathfrak{p}$, use the minimal testing sets in \cite[Proposition 3.2]{hhx_indefinite_2021} instead of $\mathcal{C}_{e_{\mathfrak{p}}}^{n}$. 
\end{proof}
\begin{proof}[Proof of Theorem \ref{thm:classic-n-universal-rankn+3}]
	First assume that $n\ge 2$ is even. Let $L\cong \langle h_{1},\ldots, h_{n+3}\rangle$ under the orthogonal base $\{x_{1},\ldots,x_{n+3}\}$ of $\mathbb{Q}L$ and $0\le \ord_{2}(h_{i})\le \ord_{2}(h_{i+1})$ for $1\le i\le n+2$. Since the discriminant of $K$ is even, there exists a dyadic prime $\mathfrak{p}$ such that $e_{\mathfrak{p}}=\ord_{\mathfrak{p}}(2)>1$. 
	
	By Lemma \ref{lem:goodBONGequivcon}, $L_{\mathfrak{p}}\cong \prec h_{1},\ldots,h_{n+3}\succ $ relative to the good BONG $\{(x_{1})_{\mathfrak{p}},\ldots,(x_{n+3})_{\mathfrak{p}}\}$. Assume that $L$ is $n$-universal over $\mathcal{O}_{K}$. By Proposition \ref{prop:Lp-nuniversal}, $L_{\mathfrak{p}}$ is $n$-universal, so, by Theorem \ref{thm:classicalnuniversaldyadic}, we have $	\ord_{\mathfrak{p}}(h_{i})=0$ for $1\le i\le n+1$ and $\ord_{\mathfrak{p}}(h_{n+2})\in \{0,1\}$. If $\ord_{\mathfrak{p}}(h_{n+2})=1$, then $ \ord_{2}(h_{n+2})e_{\mathfrak{p}}=1$ by Lemma \ref{lem:defectP-p}(i), which contradicts $e_{\mathfrak{p}}>1$. Hence $\ord_{\mathfrak{p}}(h_{n+2})=0$. For $1\le i\le n+2$, since $\ord_{\mathfrak{p}}(h_{i})=0$, we have $d_{\mathfrak{p}}(h_{i})=d_{\mathfrak{p}}(1+2k_{i})\ge e_{\mathfrak{p}}>1$ for some $k_{i}\in \mathbb{Z}$. Hence $d_{\mathfrak{p}}((-1)^{(n+2)/2}h_{1,n+2})>1$ by the domination principle, so $\ord_{\mathfrak{p}}(h_{n+3})\in \{0,1\}$ by Theorem \ref{thm:classicalnuniversaldyadic}(ii)(1)(a). Similarly, the case $\ord_{\mathfrak{p}}(h_{n+3})=1$ can be ruled out. Thus $\ord_{\mathfrak{p}}(h_{n+3})=0$ and hence $d_{\mathfrak{p}}(h_{n+3})>1$. It follows that $d_{\mathfrak{p}}(h_{i}h_{i+1})>1$ for $1\le i\le n+2$. This implies $e_{\mathfrak{p}}=1$ by Theorem \ref{thm:necessarycon}, a contradiction. A similar argument can be applied for odd $n\ge 1$.
\end{proof}
\begin{lem}\label{lem:classic-n-universal-not-overE}
		Let $ E_{\mathfrak{P}}/K_{\mathfrak{p}} $ be a finite extension with dyadic primes $\mathfrak{P}|\mathfrak{p}$ and $e(\mathfrak{P}|\mathfrak{p})>1$. If an $\mathcal{O}_{K_{\mathfrak{p}}}$-lattice $ L_{\mathfrak{p}} $ of rank $n+3$ is $n$-universal, then $L_{\mathfrak{P}}$ is not $n$-universal.
\end{lem}
\begin{proof}
	  Without loss of generality, assume that $n\ge 2$ is even.  Let $L_{\mathfrak{p}}\cong \prec a_{1},\ldots, a_{n+3}\succ$ relative to a good BONG $\{x_{1},\ldots, x_{n+3}\}$. Since $L_{\mathfrak{p}}$ is $n$-universal, $\{x_{1},\ldots,x_{n+3}\}$ is an orthogonal base of $L_{\mathfrak{p}}$ by Corollary \ref{cor:diagonalizable}. Hence $L_{\mathfrak{P}}\cong \prec a_{1},\ldots,a_{n+3}\succ$ relative to the good BONG $ \{x_{1},\ldots,x_{n+3}\}$ by Lemma \ref{lem:defectP-p}(iii).
	  
	 Write $r_{i}=\ord_{\mathfrak{p}}(a_{i})$ and $R_{i}=\ord_{\mathfrak{P}}(a_{i})$. Assume that $L_{\mathfrak{P}}$ is $n$-universal. Then, by Theorem \ref{thm:classicalnuniversaldyadic}, we have $	R_{i}=0$ for $1\le i\le n+1$ and $R_{n+2} \in \{0,1\}$. If $R_{n+2}=1$, then $ r_{n+2}e(\mathfrak{P}|\mathfrak{p}) =1$ by Lemma \ref{lem:defectP-p}(i), which contradicts $e(\mathfrak{P}|\mathfrak{p})>1$. Hence $R_{n+2}=0$. 
	
	From Lemma \ref{lem:defectP-p}(i) we have $e_{\mathfrak{P}}=e_{\mathfrak{p}}e(\mathfrak{P}|\mathfrak{p})>1$, so $d_{\mathfrak{P}}(-1)\ge e_{\mathfrak{P}}>1 $ by \cite[Lemma 2.1]{HeHu3}.
	
	For $1\le i\le n+2$, since $R_{i}=0$, Lemma \ref{lem:defectP-p}(i) implies $r_{i}=0$ and thus $d_{\mathfrak{p}}(a_{i})\ge 1$. Hence $d_{\mathfrak{P}}(a_{i})\ge d_{\mathfrak{p}}(a_{i})e(\mathfrak{P}|\mathfrak{p})>1$ by Lemma \ref{lem:defectP-p}(ii). By the domination principle, we have $d_{\mathfrak{P}}((-1)^{(n+2)/2}a_{1,n+2})>1$, so $R_{n+3}\in \{0,1\}$ by Theorem \ref{thm:classicalnuniversaldyadic}(ii)(1)(a). Similar to the argument for $R_{n+2}$, we have $R_{n+3}=0$ and thus $d_{\mathfrak{P}}(a_{n+3})>1$. By the domination principle, we have $d_{\mathfrak{P}}(-a_{i}a_{i+1})>1$ for $1\le i\le n+2$. 
	
	But the conditions of Theorem \ref{thm:classicalnuniversaldyadic}(ii)(1)(b) are satisified, hence $d_{\mathfrak{P}}(-a_{i}a_{i+1})=1-R_{i+1}\le 1$ for some $1\le j\le n+2$. A contradiction is derived. 
\end{proof}
\begin{proof}[Proof of Theorem \ref{thm:classic-n-universal-rankn+3-ramified}]
	 If $L$ is $n$-universal over $\mathcal{O}_{K}$, then $L_{\mathfrak{p}}$ is  $n$-univeral by Proposition \ref{prop:Lp-nuniversal}. Since $e(\mathfrak{P}|\mathfrak{p})>1$ from the hypothesis, $L_{\mathfrak{P}}$ is not $n$-universal by Lemma \ref{lem:classic-n-universal-not-overE}. Hence $L$ is not $n$-universal over $\mathcal{O}_{E}$ by Proposition \ref{prop:Lp-nuniversal}.
\end{proof}
\begin{proof}[Proof of Theorem \ref{thm:siegelthmII}]
	Let $\mathfrak{p}$ be a dyadic prime. Then $ (I_{m})_{\mathfrak{p}}\cong \prec 1,\ldots, 1\succ $ by Lemma \ref{lem:goodBONGequivcon}, and $ d_{\mathfrak{p}}(a_{i}(\mathfrak{p})a_{i+1}(\mathfrak{p}))=d_{\mathfrak{p}}(1)>1 $ for $1\le i\le m-1$. By the hypothesis and Proposition \ref{prop:Lp-nuniversal}, $(I_{m})_{\mathfrak{p}}$ is $n$-universal, so the necessity follows by Theorem \ref{thm:necessarycon}. 
	
	Conversely, let $N$ be a positive definite classic integral $\mathcal{O}_{K}$-lattice. Since $ K $ is not totally real, by strong approximation (cf. \cite[p.\hskip 0.1cm135]{hsia_indefinite_1998}), it is sufficient to show that $(I_{m})_{\mathfrak{p}}$ represents $N_{\mathfrak{p}}$ for each prime $\mathfrak{p}$. If $\mathfrak{p}$ is archimedean, then both  the dimension and the positive index of $K_{\mathfrak{p}}I_{m}$ are $m>n$. Hence $(I_{m})_{\mathfrak{p}}=K_{\mathfrak{p}}I_{m}$ is $n$-universal (as quadratic spaces) by \cite[Theorem 2.3]{hhx_indefinite_2021}. If $ \mathfrak{p} $ is non-dyadic, then $ (I_{m})_{\mathfrak{p}}\cong \langle 1,\ldots,1\rangle $ is $ n $-universal by \cite[Proposition 2.3]{xu_indefinite_2020} and \cite[Propositions 3.3, 3.4]{hhx_indefinite_2021}; if $ \mathfrak{p} $ is dyadic, then $ R_{i}((I_{m})_{\mathfrak{p}})=0 $ for $1\le i\le m$. For $n\ge 2$, $ (I_{m})_{\mathfrak{p}} $ is $ n $-universal by Theorem \ref{thm:classicalnuniversaldyadic}; for $n=1$, since $ e_{\mathfrak{p}}=1 $ by the hypothesis, Proposition \ref{prop:alphaproperty}(iii) implies that $ \alpha_{i}((I_{m})_{\mathfrak{p}})=1 $ for $1\le i\le m-1$. So $ (I_{m})_{\mathfrak{p}} $ is $ n $-universal by \cite[Theorem 2.1]{beli_universal_2020}. The proof is completed.
\end{proof}

\section*{Acknowledgments} 
The author would like to thank Prof. Yong Hu and Prof. Fei Xu for their helpful discussions and suggestions, and thank Prof. Beli for reading an earlier draft and providing many helpful comments. This work was supported by a grant from the National Natural Science Foundation of China (Project No. 12171223).


\begin{thebibliography}{99}
\bibitem{beli_thesis_2001} Beli, C. N.: Integral spinor norm groups over dyadic local fields and representations of quadratic lattices. Ohio State University thesis (2001).	

\bibitem{beli_integral_2003} Beli, C. N.: Integral spinor norm groups over dyadic local fields. J. Number Theory \textbf{102}, 125--182 (2003)

\bibitem{beli_representations_2006} Beli, C. N.: Representations of integral quadratic forms over dyadic local fields. Electron. Res. Announc. Amer. Math. Soc. \textbf{12}, 100--112 (2006)

\bibitem{beli_Anew_2010} Beli, C. N.: A new approach to classification of integral quadratic forms over dyadic local fields. Trans. Amer. Math. Soc. \textbf{362}, 1599--1617  (2010)

\bibitem{beli_representations_2019} Beli, C. N.: Representations of quadratic lattices over dyadic local fields (2019). arXiv:1905.04552v2.	

\bibitem{beli_universal_2020} Beli, C. N.: Universal integral quadratic forms over dyadic local fields (2020). arXiv:2008.10113v2.	

 
\bibitem{chan_ternary-1996} Chan, W.-K., Kim, M.-H., Raghavan, S.: Ternary universal integral quadratic forms over real quadratic fields. Jpn. J. Math. \textbf{22}, 263--273 (1996)

\bibitem{conway_15-theorem-2000} Conway, J. H.: Universal quadratic forms and the fifteen theorem. Contemp. Math. \textbf{272}, 23--26 (2000)
 
\bibitem{earnes_local_2021} Earnest, A. G., Gunawardana, B. L. K.: Local criteria for universal and primitively universal quadratic forms. J. Number theory \textbf{225}, 260--280 (2021)

\bibitem{HeHu1} He, Z., Hu, Y.: On $k$-universal quadratic lattices over unramified dyadic local fields. J Pure Appl Algebra \textbf{227}, 107334 (2023)

\bibitem{HeHu2} He, Z., Hu, Y.: On $n$-universal quadratic forms over dyadic local fields (2022). arXiv:2204.01997v2  (\textbf{To appear in Sci. China Math.})

\bibitem{HeHu3} He, Z., Hu, Y.: Pythagoras number of quartic orders containing $\sqrt{2}$ (2022). arXiv:2204.10468v2 

\bibitem{hhx_indefinite_2021} He, Z., Hu, Y., Xu, F.: On indefinite $k$-universal integral quadratic forms over number fields. Math. Z. \textbf{304}, 20 (2023)

\bibitem{hsia_positive_1978} Hsia, J. S., Kitaoka, Y., Kneser, M.: Representation of positive definite quadratic forms. J. Reine Angew. Math. \textbf{301}, 132--141 (1978)

\bibitem{hsia_indefinite_1998} Hsia, J. S., Shao, Y. Y., Xu, F.: Representations of indefinite quadratic forms. J. Reine Angew. Math. \textbf{494}, 129--140 (1998)
 
\bibitem{kala_2021} Kala, V., Yatsyna, P.: Lifting problem for universal quadratic forms. Adv. Math. \textbf{377}, 107497 (2021)

\bibitem{kim_finiteness_2005} Kim, B. M., Kim, M.-H., Oh, B.-K.: A finiteness theorem for representability of quadratic forms by forms. J. Reine Angew. Math. \textbf{581}, 23--30  (2005)

\bibitem{kim_2universal_1999} Kim, B. M., Kim, M.-H., Oh, B.-K.: 2-Universal positive definite integral quinary quadratic forms.  Contemp. Math. \textbf{249}, 51--62 (1999)

\bibitem{kim_2universal_1997} Kim, B. M., Kim, M.-H., Raghavan, S.:2-Universal positive definite integral quinary diagonal quadratic forms. Ramanujan J. \textbf{1}, 333--337 (1997)

\bibitem{ko_on_1937} Ko, C.: On the representation of a quadratic form as a sum of squares of linear forms. Q. J. Math. \textbf{8}, 81--98 (1937)


\bibitem{liang_on_1976} Liang, J. J.: On the integral basis of the maximal real subfield of a cyclotomic field. J. Reine Angew. Math. \textbf{286}(287), 223--226 (1976)

\bibitem{mordell_waring_1930} Mordell, L. J.:  A new Waring's problem with squares of linear forms. Q. J. Math. \textbf{1}, 276--288  (1930)

\bibitem{omeara_quadratic_1963} O'Meara, O. T.: Introduction to quadratic forms. Springer, Berlin (1963). 	

\bibitem{siegel_sums_1945} Siegel, C. L.: Sums of $ m^{th} $ powers algebraic numbers. Ann. Math. \textbf{46}, 313--319 (1945)

\bibitem{sasaki_2universal_2006} Sasaki, H.: 2-universal O-lattices over real quadratic fields. Manuscr. Math.  \textbf{119}, 97--106 (2006) 

\bibitem{sasaki_quaternary_2009} Sasaki, H.: Quaternary universal forms over $ \mathbb{Q}(\sqrt{13}) $. Ramanujan J. \textbf{18}, 73--80 (2009) 

\bibitem{xu_indefinite_2020} Xu, F., Zhang, Y.: On indefinite and potentially universal quadratic forms over number fields.  Trans. Amer. Math. Soc. \textbf{375}, 2459--2480 (2022)
\end{thebibliography}
\end{document}